\theoremstyle{plain}
\newtheorem{Thm}{Theorem}[section]
\newtheorem{Lem}[Thm]{Lemma}
\newtheorem{Cor}[Thm]{Corollary}
\newtheorem{Pro}[Thm]{Proposition}
\theoremstyle{definition}
\newtheorem{Def}[Thm]{Definition}
\theoremstyle{remark}
\newtheorem{Rem}[Thm]{Remark}
\numberwithin{equation}{section}
\newcommand{\ITE}[3]{\ifthenelse{#1}{#2}{#3}}\newcommand{\ITEE}[4][]{\ITE{\equal{#2}{#3}}{#4}{#1}}
\newenvironment{cor}[2][]{\ITEE[{\begin{Cor}[#1]}]{#1}{}{\begin{Cor}}\label{cor:#2}}{\end{Cor}}
\newenvironment{dfn}[2][]{\ITEE[{\begin{Def}[#1]}]{#1}{}{\begin{Def}}\label{def:#2}}{\end{Def}}
\newenvironment{lem}[2][]{\ITEE[{\begin{Lem}[#1]}]{#1}{}{\begin{Lem}}\label{lem:#2}}{\end{Lem}}
\newenvironment{pro}[2][]{\ITEE[{\begin{Pro}[#1]}]{#1}{}{\begin{Pro}}\label{pro:#2}}{\end{Pro}}
\newenvironment{rem}[2][]{\ITEE[{\begin{Rem}[#1]}]{#1}{}{\begin{Rem}}\label{rem:#2}}{\end{Rem}}
\newenvironment{thm}[2][]{\ITEE[{\begin{Thm}[#1]}]{#1}{}{\begin{Thm}}\label{thm:#2}}{\end{Thm}}
\newcommand{\COR}[2][!]{\ITEE{#1}{!}{Corollary~}\ITEE{#1}{s}{Corollaries~}\textup{\ref{cor:#2}}}
\newcommand{\DEF}[2][!]{\ITEE{#1}{!}{Definition~}\ITEE{#1}{s}{Definitions~}\textup{\ref{def:#2}}}
\newcommand{\LEM}[2][!]{\ITEE{#1}{!}{Lemma~}\ITEE{#1}{s}{Lemmas~}\textup{\ref{lem:#2}}}
\newcommand{\PRO}[2][!]{\ITEE{#1}{!}{Proposition~}\ITEE{#1}{s}{Propositions~}\textup{\ref{pro:#2}}}
\newcommand{\THM}[2][!]{\ITEE{#1}{!}{Theorem~}\ITEE{#1}{s}{Theorems~}\textup{\ref{thm:#2}}}
\newcommand{\CCC}{\mathbb{C}}
\newcommand{\QQQ}{\mathbb{Q}}
\newcommand{\RRR}{\mathbb{R}}
\newcommand{\AAa}{\CMcal{A}}
\newcommand{\BBb}{\CMcal{B}}
\newcommand{\EEe}{\CMcal{E}}
\newcommand{\HHh}{\CMcal{H}}
\newcommand{\KKk}{\CMcal{K}}
\newcommand{\QQq}{\CMcal{Q}}
\newcommand{\ZZz}{\CMcal{Z}}
\newcommand{\DdD}{\EuScript{D}}
\newcommand{\EeE}{\EuScript{E}}
\newcommand{\MmM}{\EuScript{M}}
\newcommand{\WwW}{\EuScript{W}}
\newcommand{\Bb}{\mathfrak{B}}
\newcommand{\Mm}{\mathfrak{M}}
\newcommand{\bB}{\mathfrak{b}}
\newcommand{\jJ}{\mathfrak{j}}
\newcommand{\uU}{\mathfrak{u}}
\newcommand{\aaA}{\mathscr{A}}
\newcommand{\bbB}{\mathscr{B}}
\newcommand{\ddD}{\mathscr{D}}
\newcommand{\eeE}{\mathscr{E}}
\newcommand{\ffF}{\mathscr{F}}
\newcommand{\ggG}{\mathscr{G}}
\newcommand{\iiI}{\mathscr{I}}
\newcommand{\kkK}{\mathscr{K}}
\newcommand{\llL}{\mathscr{L}}
\newcommand{\mmM}{\mathscr{M}}
\newcommand{\rrR}{\mathscr{R}}
\newcommand{\ssS}{\mathscr{S}}
\newcommand{\uuU}{\mathscr{U}}
\newcommand{\vvV}{\mathscr{V}}
\newcommand{\zzZ}{\mathscr{Z}}
\newcommand{\eEE}{\pmb{E}}
\newcommand{\xXX}{\pmb{X}}
\newcommand{\yYY}{\pmb{Y}}
\newcommand{\ueA}{\textup{\textsf{A}}}
\newcommand{\ueB}{\textup{\textsf{B}}}
\newcommand{\ueS}{\textup{\textsf{S}}}
\newcommand{\ueT}{\textup{\textsf{T}}}
\newcommand{\ueW}{\textup{\textsf{W}}}
\newcommand{\ueX}{\textup{\textsf{X}}}
\newcommand{\ueY}{\textup{\textsf{Y}}}
\newcommand{\ueZ}{\textup{\textsf{Z}}}
\newcommand{\cdotaff}{\pmb{\cdot}}
\newcommand{\club}{{\scriptsize$\clubsuit$}}
\newcommand{\dd}{\colon}
\newcommand{\df}{\stackrel{\textup{def}}{=}}
\newcommand{\dint}[1]{\,\textup{d} #1}
\newcommand{\epsi}{\varepsilon}
\newcommand{\FTI}{\textup{\textsf{FTI}}}
\newcommand{\geqsl}{\geqslant}
\newcommand{\leqsl}{\leqslant}
\newcommand{\minusaff}{\pmb{-}}
\newcommand{\muple}{\stackrel{\textup{\tiny$\bullet$}}{}}
\newcommand{\plusaff}{\pmb{+}}
\newcommand{\scalar}[2]{\left\langle#1,#2\right\rangle}
\newcommand{\scalarr}{\langle\cdot,\mathrm{-}\rangle}
\newcommand{\spade}{{\scriptsize$\spadesuit$}}
\newcommand{\tI}{\textup{I}}
\newcommand{\varempty}{\varnothing}
\newcommand{\id}{\operatorname{id}}
\newcommand{\RE}{\operatorname{Re}}
\newcommand{\supp}{\operatorname{supp}}
\newcommand{\TFCAE}{The following conditions are equivalent:}
\newcommand{\iaoi}{if and only if}
\begin{document}

\title[Functional calculus in finite type I algebras]
 {Functional calculus in finite\\type I von Neumann algebras}
\author[P. Niemiec]{Piotr Niemiec}
\address{Instytut Matematyki\\{}Wydzia\l{} Matematyki i~Informatyki\\{}
 Uniwersytet Jagiello\'{n}ski\\{}ul. \L{}ojasiewicza 6\\{}30-348 Krak\'{o}w\\{}Poland}
\email{piotr.niemiec@uj.edu.pl}
\thanks{The author gratefully acknowledges the assistance of the Polish Ministry of Sciences
 and Higher Education grant NN201~546438 for the years 2010--2013.}
\begin{abstract}
A certain class of matrix-valued Borel matrix functions is introduced and it is shown that all
functions of that class naturally operate on any operator $T$ in a finite type~$\tI$ von Neumann
algebra $\MmM$ in a way such that uniformly bounded sequences $f_1,f_2,\ldots$ of functions that
converge pointwise to $0$ transform into sequences $f_1[T],f_2[T],\ldots$ of operators in $\MmM$
that converge to $0$ in the $*$-strong operator topology. It is also demonstrated that the double
$*$-commutant of any such operator $T$ which acts on a separable Hilbert space coincides with
the set of all operators of the form $f[T]$ where $f$ runs over all function from the aforementioned
class. Some conclusions concerning so-called operator-spectra of such operators are drawn and
a new variation of the spectral theorem for them is formulated.
\end{abstract}
\subjclass[2010]{Primary 47A60; Secondary 47C15.}
\keywords{Finite type I von Neumann algebra; operator affiliated with a von Neumann algebra;
 matrix-valued matrix function; double $*$-commutant.}
\maketitle

\section{Introduction}

A classical (continuous or Borel) functional calculus for normal operators is a powerful tool
in operator theory (let us mention only a single deep application, namely, the functional calculus
for selfadjoint operators is involved in the most common proof of Kaplansky's density theorem for
von Neumann algebras; see \cite{kap}, Theorem~5.3.5 in \cite{kr1}, Theorem~1.9.1 in \cite{sak}
or Theorem~II.4.8 in \cite{ta1}). On the one hand, this calculus has intuitive properties (such as
$(g \circ f)[N] = g[f[N]]$ where $N$ is a normal operator and $f, g\dd \CCC \to \CCC$ are two Borel
functions) and is fairly flexible, and on the other hand, is, in a sense, as wide as possible and
sufficient enough. Indeed, if $N$ is a bounded normal operator on a separable Hilbert space, then
the set of all operators of the form $f[N]$ where $f$ runs over all bounded Borel functions $f\dd
\CCC \to \CCC$ coincides with the double commutant of $N$ (thanks to the Fuglede theorem and
Theorem~8.10 in Chapter~IX of \cite{con}). The aim of this paper is to generalize the concept
of functional calculus to a much wider class of operators (and their tuples)---those which belong to
(or are affiliated with) finite type~$\tI$ von Neumann algebras. Typical examples of such operators
are (finite square) block matrices of commuting normal operators, and direct sums of such block
matrices. To formulate our main result, we need to introduce necessary notions and fix
the notation.\par
For any $n > 0$, let $\mmM_n$ and $\uuU_n$ denote, respectively, the $C^*$-algebra of all complex
$n \times n$ matrices and its unitary group. Further, let $\mmM$ stand for the topological disjoint
union $\bigsqcup_{n=1}^{\infty} \mmM_n$ of the spaces $\mmM_n$. $\mmM$ is a Polish (that is,
separable completely metrizable) space. For any $X \in \mmM_n$ we put $d(X) \df n$. Further, for
each $U \in \uuU_n$, the matrix $U . X$ is defined as $U X U^{-1}$. Finally, if, in addition, $Y \in
\mmM_k$, $X \oplus Y$ denotes a block matrix $\begin{pmatrix}X & 0\\0 & Y\end{pmatrix} \in
\mmM_{n+k}$. Additionally, $\|X\|$ stands for the operator norm of $X$.\par
A function $f\dd \mmM \to \mmM$ is said to be \textit{compatible} if
\begin{itemize}
\item $d(f(X)) = d(X)$ for each $X \in \mmM$;
\item $f(U . X) = U . f(X)$ for any $X \in \mmM$ and $U \in \uuU_{d(X)}$;
\item $f(X \oplus Y) = f(X) \oplus f(Y)$ for all $X, Y \in \mmM$.
\end{itemize}
The function $f$ is \textit{locally bounded} if for any positive real constant $R$,
\begin{equation*}
\sup\{\|f(X)\|\dd\ X \in \bbB(R)\} < \infty
\end{equation*}
where $\bbB(R) \df \{X \in \mmM\dd\ \|X\| \leqsl R\}$. Polynomials in two noncommuting variables $X$
and $X^*$ are classical examples of locally bounded compatible functions.\par
Finally, let $\ffF_{1,1}^{loc}$ denote the family of all locally bounded compatible Borel functions
$f\dd \mmM \to \mmM$. It is easy to see that $\ffF_{1,1}^{loc}$ is a unital $*$-algebra when all
algebraic operations are defined pointwise.\par
The main result of the paper for single operators reads as follows. (Below $\id$ denotes
the identity map on $\mmM$ and $\BBb(\HHh)$ is the $C^*$-algebra of all bounded linear operators
on a Hilbert space $\HHh$.)

\begin{thm}{main1}
Let $T$ be an operator in a finite type~$\tI$ von Neumann algebra $\MmM$ that acts on a Hilbert
space $\HHh$. There exists a unique unital $*$-homomorphism $\ffF_{1,1}^{loc} \ni f \mapsto f[T] \in
\BBb(\HHh)$ such that $\id[T] = T$ and
\begin{itemize}
\item[(bc)] whenever $f_n \in \ffF_{1,1}^{loc}$ are uniformly bounded on each of the sets $\bbB(R)$
 and converge pointwise to $f \in \ffF_{1,1}^{loc}$, then $f_n[T]$ converge to $f[T]$
 in the $*$-strong topology of $\BBb(\HHh)$.
\end{itemize}
Moreover, $f[T] \in \MmM$ and $(g \circ f)[T] = g[f[T]]$ for any $f, g \in \ffF_{1,1}^{loc}$.\par
If, in addition, $\HHh$ is separable, then the set $\{f[T]\dd\ f \in \ffF_{1,1}^{loc}\}$ coincides
with the smallest von Neumann algebra on $\HHh$ that contains $T$.
\end{thm}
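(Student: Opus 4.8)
The plan is to build the map $f\mapsto f[T]$ explicitly from the structure of finite type~$\tI$ von Neumann algebras, verify its properties, deduce uniqueness, and then — in the separable case — prove the two inclusions of the double-commutant assertion. For the construction, decompose $\MmM=\bigoplus_n\MmM_n$ into its type~$\tI_n$ summands and, by multiplicity theory, realize each $\MmM_n$ on the corresponding reducing subspace of $\HHh$ as $\mmM_n\bigl(L^\infty(X_n,\mu_n)\bigr)\otimes 1_{\KKk_n}$ acting on $\CCC^n\otimes L^2(X_n,\mu_n)\otimes\KKk_n$ (the multiplicity space $\KKk_n$ is carried along passively; for non-separable $\HHh$ one either allows $X_n$ non-standard or passes to separable reducing subspaces). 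Then $T=\bigoplus_n T_n$ with $T_n$ represented by an essentially bounded measurable field $t_n\dd X_n\to\mmM_n$, and I would put
\[
 f[T]\df\bigoplus_n\Bigl(\int^\oplus_{X_n}f\bigl(t_n(x)\bigr)\dint{\mu_n(x)}\Bigr)\otimes 1_{\KKk_n}\,.
\]
Being Borel and $d$-preserving makes $x\mapsto f(t_n(x))$ a bounded measurable $\mmM_n$-valued field, and local boundedness gives $\|f[T]\|\leqsl\sup\{\|f(Z)\|\dd Z\in\bbB(\|T\|)\}<\infty$, so $f[T]\in\MmM$. Since the operations of $\ffF_{1,1}^{loc}$ and of $\mmM_n(L^\infty(X_n))$ are both fiberwise, $f\mapsto f[T]$ is a unital $*$-homomorphism with $\id[T]=T$; the fiber of $f[T]$ over $x$ being $f(t_n(x))$ shows that $f[T]$ is again in standard form and that $(g\circ f)[T]=g[f[T]]$. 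For (bc) I fix $\xi\in\HHh$, write $\|(f_k[T]-f[T])\xi\|^2$ as a sum over $n$ of integrals $\int_{X_n}\bigl\|(f_k-f)(t_n(x))\,\xi(x)\bigr\|^2\dint{\mu_n(x)}$, and apply dominated convergence twice (the dominating functions coming from the uniform bound of the $f_k$ on $\bbB(\|T\|)$); running the same computation for $f_k^*$, where $f^*(Z)\df f(Z)^*$, and using $f_k^*[T]=f_k[T]^*$ yields $*$-strong convergence.

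For uniqueness, if $\Phi$ is any unital $*$-homomorphism with $\Phi(\id)=T$ and (bc), then $\Phi(p)=p(T,T^*)=p[T]$ for every $*$-polynomial $p$, and $\{f\dd\Phi(f)=f[T]\}$ is closed under (bc)-limits (both sides being (bc)-continuous, $*$-strong limits being unique). I would then invoke the structural fact that $\ffF_{1,1}^{loc}$ is the smallest family of locally bounded compatible functions containing the $*$-polynomials and closed under (bc)-limits — the ``compatible, locally bounded'' counterpart of the generation of Borel functions from continuous ones by (transfinitely) iterated bounded pointwise limits, and the point at which the Borel clause in the definition of $\ffF_{1,1}^{loc}$ is exactly calibrated — to conclude $\Phi(f)=f[T]$ throughout; in particular the construction did not depend on the chosen standard form. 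The same fact yields one inclusion of the double-commutant statement: writing $\aaA\df\{f[T]\dd f\in\ffF_{1,1}^{loc}\}$, the set of $f$ with $f[T]\in\{T,T^*\}''$ contains the $*$-polynomials and is (bc)-closed (because $\{T,T^*\}''$ is $*$-strongly closed and (bc)-convergence of $f_k[T]$ is $*$-strong convergence), hence equals $\ffF_{1,1}^{loc}$, so $\aaA\subseteq\{T,T^*\}''$, which by the bicommutant theorem is the smallest von Neumann algebra containing $T$; note too that $\{T,T^*\}''\subseteq\MmM$ since $\MmM'\subseteq\{T,T^*\}'$.

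Now assume $\HHh$ separable; it remains to show every $R\in\{T,T^*\}''$ has the form $f[T]$. As $R\in\MmM$ it is decomposable, $R=\bigoplus_n\bigl(\int^\oplus r_n(x)\dint{\mu_n(x)}\bigr)\otimes 1_{\KKk_n}$. Testing $R$ against the decomposable members $\int^\oplus s(x)\dint{\mu_n(x)}$ of $\{T,T^*\}'$ (with $s(x)\in\{t_n(x),t_n(x)^*\}'$ measurable) forces $r_n(x)\in\{t_n(x),t_n(x)^*\}''$ a.e.; testing it against the non-decomposable intertwiners in $\{T,T^*\}'$ — the (suitably rescaled) fiber-exchanging operators and, more generally, the partial isometries built from a measurable field of unitaries $V$ with $t_n\circ\sigma=V(\cdot)V^*\circ t_n$, which lie in $\{T,T^*\}'$ precisely because the $V$'s are unitary so that they also commute with $T_n^*$ — forces $r_n$ to be $\uuU_n$-equivariant along the essential range of $t_n$ and to carry matching values on isomorphic indecomposable blocks occurring in the fields $t_n,t_m$, across all indices. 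Since $r_n(x)\in\{t_n(x),t_n(x)^*\}''$ it is block-diagonal for the decomposition of $t_n(x)$ into indecomposable matrices, so these data assemble: define $f$ on an indecomposable matrix $w$ by transporting through any unitary the value carried by a copy of $w$ inside some $t_n(x)$ (well defined by the forcing and because $\mathrm{Stab}(w)$ consists of unitaries of $\{w,w^*\}'$), put $f\df 0$ on indecomposables carrying no data, and extend to all of $\mmM$ by $f(Z_1\oplus\dots\oplus Z_r)\df f(Z_1)\oplus\dots\oplus f(Z_r)$ on the decomposition into indecomposables. Measurable-selection arguments — legitimate because each orbit space $\mmM_n/\uuU_n$ is standard Borel, $\uuU_n$ being compact — make $f$ Borel, compatibility holds by construction, $\|f\|\leqsl\|R\|$ everywhere, and $f[T]=R$, which together with the inclusion of the previous paragraph gives $\aaA=\{T,T^*\}''$.

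The main obstacle is this last step: extracting from a single operator $R$ in the double commutant enough rigidity of its field of fibers — membership in the fiberwise double commutants, $\uuU_n$-equivariance, coherence across different matrix sizes — and then reassembling it into a bona fide element of $\ffF_{1,1}^{loc}$, since compatibility is a rigid global condition whereas $R$ presents itself only fiberwise; the measurable-selection bookkeeping over decompositions into indecomposables is where the genuine work lies. A close second is the generation property of $\ffF_{1,1}^{loc}$ underlying both uniqueness and the easy inclusion; if it is not already available it must be established by a transfinite (monotone-class) induction tailored to the compatibility and local-boundedness constraints.
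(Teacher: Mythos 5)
Your construction of $f\mapsto f[T]$ via reduction theory and your verification of (bc) by dominated convergence are essentially the paper's, and your overall architecture — decompose, act fiberwise, invoke a generation theorem for uniqueness, reassemble a preimage for surjectivity onto $\{T,T^*\}''$ — also matches. But the two steps you flag as the main obstacles are genuine gaps, and the device you never mention — the \emph{kernel} — is exactly what closes both of them.

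The generation fact you appeal to (that $\ffF_{1,1}^{loc}$ is the smallest (bc)-closed unital $*$-algebra containing $\id$) is the crux, and it is \THM{F} of the paper. It is not proved by the transfinite monotone-class induction you suggest: compatibility is a rigid algebraic constraint, not a $\sigma$-algebra-style closure property, so there is no hierarchy of compatible functions analogous to the Borel hierarchy to induct along, and unrestricted transfinite limits would also destroy local boundedness. What the paper does instead is construct a $\sigma$-compact \emph{kernel} $\kkK\subset\mmM$ — a Borel selector choosing one representative from each unitary orbit of irreducible matrices (\PRO{kernel}) — and prove that restriction to $\kkK$ is a bijection from compatible Borel functions onto degree-preserving Borel functions on $\kkK$, respecting boundedness and pointwise convergence (\COR{ext}). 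The generation theorem is then proved on $\kkK$: a $C^*$-valued Stone--Weierstrass argument (\LEM{SW}) on a compact slice yields the continuous degree-preserving functions, a $\sigma$-algebra argument yields the bounded Borel ones, and $\sigma$-compactness together with a $\bB$-transform trick (using the inverse-closure hypothesis (E1), which your proposed generation statement omits) reaches the locally bounded ones. Without a tractable parametrization of the class being generated, your uniqueness argument has no engine.

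The same kernel is what makes your ``extraction and reassembly'' of a preimage for $R\in\{T,T^*\}''$ feasible. The paper arranges the direct integral (the argument labelled (\spade) in \LEM{sep}) so that, after removing a null set, the fibers $t_n(x)$ are mutually unitarily inequivalent irreducibles, so $R$ corresponds to a degree-preserving bounded Borel function on a Borel set $\kkK_0$ of inequivalent irreducibles in a single $\mmM_m^\ell$; it then enlarges $\kkK_0$ to a Borel kernel $\kkK$ of all of $\mmM$ (\LEM{ext-kernel}), extends the function by zero off $\kkK_0$, and invokes the bijection of \COR{ext} and \LEM{ext} to produce $f\in\ffF_{1,1}^{bd}$ with $f[T]=R$ and $\|f\|\leqsl\|R\|$ — this is ($\Lambda$3) of \LEM{sep}. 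Selecting one Borel representative per irreducible unitary orbit is a manageable descriptive-set-theoretic problem; decomposing arbitrary matrices into irreducibles and bookkeeping coherence across blocks and indices, as you propose, is substantially harder (the decomposition is not canonical) and unnecessary. In short, your skeleton is correct and your diagnosis of where the difficulty lies is accurate, but the ingredient you postpone — the kernel of Section 2 and its extension correspondence — is where essentially all the work happens, and the alternative closing routes you sketch do not supply it.
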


In \THM{main2} we shall prove a counterpart of \THM{main1} for finite tuples of operators affiliated
with a (common) finite type~$\tI$ von Neumann algebra.\par
As mentioned above, \THM{main1} applies to operators that are direct sums of finite block matrices
of commuting operators (and, up to unitary equivalence, only to such operators). This result enables
defining matrix-valued spectra of operators generating finite type~$\tI$ von Neumann algebras
in a new and transparent way. This shall be done in Section~4. For other results related with
the concept of an operator-valued spectrum, the reader is referred to, e.g., \cite{ern},
\cite{ha1,ha2}, \cite{kkl} and \cite{lee}. It turns out that the (classical) functional calculus for
normal operators is a special case of the calculus introduced in \THM{main1}. So, our concept is
more general, and as flexible and wide as the former.\par
Compatible functions, defined above, resemble (but differ from) \textit{nc} functions studied
in \cite{kvv}. Instead of preservig unitary actions, nc functions have to respect (simultaneous)
similarities. So, each nc function is compatible, but not conversely. Although the concepts
of compatible and nc functions are similar, both the works (\cite{kvv} and the present) are
of idependent interest (and none of them inspired the other). The notion of a compatible function
almost coincides with the notion of a \textit{decomposable} function, introduced in \cite{bfh} and
studied in more detail by Hadwin \cite{ha3} (see also \S5 in \cite{ha4}) who called the action
of decomposable functions on operators a \textit{functional calculus}. The main difference between
our approach and Hadwin's (beside his restriction to separable spaces and ours to finite type~$\tI$
von Neumann algebras) is that decomposable functions are, by their very definition, defined
on the algebra of all bounded operators on a separable Hilbert space and the scope of their
operation (that is, the class of their operands) is not extended to more general mathematical
objects, which is in contrast to the classical functional calculus for normal operators where
the functions that take part in this calculus are scalar and scalar-valued, and the way they operate
on scalars is extended to the class of all normal operators (with respective spectra). This is
exactly the spirit of our approach. If we agree that matrices are `simple objects' (in comparison
to linear operators acting in infinite-dimensional spaces), the conclusion of \THM{main1} says that
the scope of operation of (matrix-valued matrix) compatible functions may be extended, in a very
reasonable and intuitive way, to the class of all operators that belong to (or generate) finite
type~$\tI$ von Neumann algebras. Even more, decomposable functions have almost purely theoretical
meaning, because they are hard to construct (apart from polynomials in two noncommuting variables
$X$ and $X^*$ and holomorphic functions), whereas compatible functions, being merely Borel, are easy
to do so (of course, they need to satisfy some additional axioms, but---as shown
in Section~2---there is a natural one-to-one correspondence, which preserves boundedness and
pointwise convergence of sequences of functions, between all compatible functions and all
$\mmM$-valued Borel functions defined on a certain Borel subset of $\mmM$ that preserve the degree
of matrices). As decomposable functions are defined as operator-valued operator functions, there is
nothing challenging in studying issues similar to condition (bc) (formulated in \THM{main1}) for
them. And it was (bc) that gave foundations for our investigations. (Actually, when we were
preparing the material of the present paper, we were not aware of Hadwin's results. We learnt about
them only at the final stage of the work.)\par
Operators that generate finite type~$\tI$ von Neumann algebras have been widely studied for a long
time and the literature concerning this subject is rich. Here we mention only a few papers
in the topic (in the chronological order): \cite{bro}, \cite{gon}, \cite{pea}, \cite{rad},
\cite{b-d}, \S3 and \S4 of Chapter~5 in \cite{ern}, \cite{qui}.\par
It is assumed that the reader is familiar with the basics on von Neumann algebras as well as
operators affiliated with them. In particular, the reader should know that, according to a result
due to Murray and von Neumann \cite{mvn}, all operators affiliated with a fixed finite type~$\tI$
von Neumann algebra form a $*$-algebra (with natural algebraic operations). For another proof, see
e.g.\ \cite{liu}. Since our work strongly depends on reduction theory of von Neumann algebras due
to von Neumann himself \cite{jvn}, we expect that the reader knows a part of this theory that
concerns finite type~$\tI$ algebras.\par
The paper is organized as follows. Section~2 discusses, in more detail and more generally,
compatible functions. We prove there a result (\THM{F}) which the uniqueness part of \THM{main1}
depends on. The third part is devoted to the proof of a generalization (and a strenthening; see
\THM{main2}) of \THM{main1} (and of \THM{main1} itself). Section~4 discusses a new concept
of operator-valued spectra of operators. We define there the spectral measure of a finite tuple
$\ueT$ of operators affiliated with a finite type~$\tI$ von Neumann algebra, formulate a spectral
theorem for $\ueT$ (\THM{spectral}) and define the \textit{principal} spectrum of $\ueT$ as well as
give its characterization (\PRO{spectrum}), which resembles one of classical properties
of the (usual) spectrum of a normal operator. It is also shown how such a tuple makes
(in an ambiguous way) the underlying Hilbert space a left module over the $C^*$-product of all
$\mmM_n$.

\subsection*{Notation and terminology} In this paper all Hilbert spaces are complex.
An \textit{operator} means a closed densely defined (unless otherwise stated) linear operator acting
in a Hilbert space. An operator $T$ in a Hilbert space $\HHh$ is \textit{affiliated} with
a von Neumann algebra $\MmM$ of operators on $\HHh$ if $U T U^{-1} = T$ for any unitary operator $U$
from the commutant $\MmM'$ of $\MmM$. The algebra of all operators affiliated with a finite
type~$\tI$ von Neumann algebra $\MmM$ shall be denoted by $\hat{\MmM}$, and ``$\plusaff$'' and
``$\cdotaff$'' will stand for, respectively, the addition and the multiplication of $\hat{\MmM}$.
When $\ueT = (T_s)_{s \in S}$ is a system of bounded operators on a Hilbert space $\HHh$, we use
$\WwW(\ueT)$ to denote the smallest von Neumann algebra on $\HHh$ that contains each
of the operators $T_s$. If $T_s$ are merely closed and densely defined in $\HHh$ (and still $\ueT =
(T_s)_{s \in S}$), by $\WwW'(\ueT)$ we denote the set of all bounded operators $X$ on $\HHh$ for
which $X T_s \subset T_s X$ and $X^* T_s \subset T_s X^*$ for all $s \in S$. For any ring $\rrR$,
$\ZZz(\rrR)$ stands for the center of $\rrR$. We use $\WwW''(\ueT)$ to denote the commutant
$(\WwW'(\ueT))'$ of $\WwW'(\ueT)$. Recall that both $\WwW'(\ueT)$ and $\WwW''(\ueT)$ are von Neumann
algebras on $\HHh$, and that $\WwW''(\ueT) = \WwW(\ueT)$ (by von Neumann's double commutant theorem)
provided $\ueT$ consists of bounded operators. Whenever $x$ is a normal element in a unital
$C^*$-algebra and $u\dd D \to \CCC$ (where $D$ is a set in $\CCC$) is a continuous function whose
domain $D$ contains the spectrum $\sigma(x)$ of $x$, by $u[x]$ we denote the effect of acting of $u$
on $x$ within the classical functional calculus. For any topological space $X$, $\Bb(X)$ is reserved
to denote the $\sigma$-algebra of all Borel sets in $X$, that is, $\Bb(X)$ is the smallest
$\sigma$-algebra of subsets of $X$ that contains all open sets. A function $f\dd X \to Y$ between
topological spaces $X$ and $Y$ is \textit{Borel} if $f^{-1}(B) \in \Bb(X)$ for any $B \in
\Bb(Y)$.\par
All notations and terminologies introduced earlier in this section are obligatory. Additionally,
we denote by $I_n$ the unit $n \times n$ matrix.

\section{Compatible matrix functions}

First we shall extend the concept of compatible functions introduced in the previous part. To this
end, we reserve $\ell$ and $\ell'$ to denote lengths of tuples of matrices (as well as
of operators). So, $\ell$ and $\ell'$ are arbitrary positive integers. Let $\mmM^{(\ell)}$ stand for
the topological disjoint union $\bigsqcup_{n=1}^{\infty} \mmM_n^{\ell}$ of the product spaces
$\mmM_n^{\ell}$. $\mmM^{(\ell)}$ is a Polish (that is, separable completely metrizable) space.
Recall that $\mmM^{(1)} = \mmM$. The space $\mmM^{(\ell)}$ shall be equipped with ingredients $d$
(the \textit{degree} map), ``$\oplus$'' (the \textit{addition}) and ``$.$'' (the \textit{unitary
action}) defined below (according to the terminology of \cite{pn6}, $(\mmM^{(\ell)},d,\oplus,.)$ is
a \textit{tower}). For any $\ueA = (A_k)_{k=1}^{\ell} \in \mmM^{(\ell)}$ we denote by $d(\ueA)$
a common degree of the matrices $A_k$ (so, $d(\ueA) \df n$ for $\ueA \in \mmM_n^{\ell}$). Further,
for each $U \in \uuU_{d(\ueA)}$, the tuple $U . \ueA$ is defined as $(U A_k U^{-1})_{k=1}^{\ell} \in
\mmM^{(\ell)}$ (observe that $d(U . \ueA) = d(\ueA)$). Finally, if, in addition, also $\ueB =
(B_k)_{k=1}^{\ell}$ is a member of $\mmM^{(\ell)}$, the tuple $\ueA \oplus \ueB$ is defined
coordinatewise, that is, $\ueA \oplus \ueB \df (A_k \oplus B_k)_{k=1}^{\ell}$. Additionally,
$\|\ueA\|$ will stand for the maximum of all $\|A_k\|$.\par
When $\ssS$ is an arbitrary subset of $\mmM^{(\ell)}$, a function $f\dd \ssS \to \mmM^{(\ell')}$ is
said to be \textit{compatible} if
\begin{itemize}
\item $d(f(\ueX)) = d(\ueX)$ for each $\ueX \in \ssS$;
\item $f(U . (\bigoplus_{j=1}^s \ueX_j)) = U . (\bigoplus_{j=1}^s f(\ueX_j))$ for all finite systems
 $\ueX_1,\ldots,\ueX_s \in \ssS$ and $U \in \uuU_N$ with $N = \sum_{j=1}^s d(\ueX_j)$ such that
 $\bigoplus_{j=1}^s \ueX_j \in \ssS$.
\end{itemize}
(Compatible functions, in a more general context, were introduced in \cite{pn6} to show that certain
algebras of such functions serve as models for \textit{all} subhomogeneous $C^*$-algebras.)
The function $f$ is \textit{bounded} if $(\|f\| \df\,) \sup_{\ueX\in\ssS} \|f(\ueX)\| < \infty$
(where $\sup(\varempty) \df 0$), and $f$ is \textit{locally bounded} if, for any positive real
constant $R$, the restriction of $f$ to $\ssS \cap \bbB^{(\ell)}(R)$ is bounded where
$\bbB^{(\ell)}(R) \df \{\ueX \in \mmM^{(\ell)}\dd\ \|\ueX\| \leqsl R\}$. Finally, let
$\ffF_{\ell,\ell'}$ (resp.\ $\ffF_{\ell,\ell'}^{loc}$; $\ffF_{\ell,\ell'}^{bd}$) denote the family
of all (resp.\ all locally bounded; all bounded) compatible Borel functions $f\dd \mmM^{(\ell)} \to
\mmM^{(\ell')}$. It is easy to see that $\ffF_{\ell,\ell'}$ is a unital $*$-algebra when
the algebraic operations are defined as follows: if $f, g \in \ffF_{\ell,\ell'}$, $\alpha \in \CCC$,
$\ueX \in \mmM^{(\ell)}$ and $f(\ueX) = (A_1,\ldots,A_{\ell'})$ and $g(\ueX) =
(B_1,\ldots,B_{\ell'})$, then
\begin{align*}
(f+g)(\ueX) &\df (A_1+B_1,\ldots,A_{\ell'}+B_{\ell'}),\\
(\alpha \cdot f)(\ueX) &\df (\alpha A_1,\ldots,\alpha A_{\ell'}),\\
(f \cdot g)(\ueX) &\df (A_1 B_1,\ldots,A_{\ell'} B_{\ell'}),\\
(f^*)(\ueX) &\df (A_1^*,\ldots,A_{\ell'}^*)
\end{align*}
(the unit of $\ffF_{\ell,\ell'}$ is a function that is constantly equal to $(I_n,\ldots,I_n) \in
\mmM_n^{\ell'}$ on $\mmM_n^{\ell}$). Moreover, $\ffF_{\ell,\ell'}$ is sequentially closed
in the pointwise convergence topology of $(\mmM^{(\ell')})^{\mmM^{(\ell)}}$; that is, if $f_1,f_2,
\ldots \in \ffF_{\ell,\ell'}$ converge pointwise to $f\dd \mmM^{(\ell)} \to \mmM^{(\ell')}$, then $f
\in \ffF_{\ell,\ell'}$ as well. This implies that $\ffF_{\ell,\ell'}^{bd}$ is a unital $C^*$-algebra
(with the norm $\|\cdot\|$ defined above). For simplicity, we put $\ffF \df \bigcup_{\ell}
\bigsqcup_{\ell'} \ffF_{\ell,\ell'}$ where $\ell$ and $\ell'$ run over all positive integers.
Whenever $f_1,\ldots,f_{\ell'}$ is a finite system of functions in $\ffF_{\ell,1}$,
by $(f_k)_{k=1}^{\ell'}$ we shall denote their diagonal function; that is, $(f_k)_{k=1}^{\ell'}\dd
\mmM^{(\ell)} \to \mmM^{(\ell')}$ and $(f_k)_{k=1}^{\ell'}(\ueX) \df
(f_1(\ueX),\ldots,f_{\ell'}(\ueX))$ for $\ueX \in \mmM^{(\ell)}$. Finally, for any positive integer
$j \leqsl \ell$, $\pi^{(\ell)}_j\dd \mmM^{(\ell)} \to \mmM$ will stand for the projection onto
the $j$th coordinate; that is, $\pi^{(\ell)}_j(X_1,\ldots,X_{\ell}) \df X_j$.\par
Below we collect three additional properties of the class $\ffF$, which are relevant in our further
investigations. Their proofs are straightforward and thus we skip them.
\begin{itemize}
\item If $f_1,\ldots,f_{\ell'}$ is a finite collection of functions that belong to $\ffF_{\ell,1}$,
 then $(f_k)_{k=1}^{\ell'} \in \ffF_{\ell,\ell'}$.
\item If $f$ and $g$ belong to, respecrtively, $\ffF_{\ell,\ell'}$ and $\ffF_{\ell',\ell''}$, then
 $g \circ f$ is well defined and belongs to $\ffF_{\ell,\ell''}$.
\item If $f \in \ffF_{\ell,1}$ is such that $f(\ueX)$ is an invertible matrix for any $\ueX \in
 \mmM^{(\ell)}$, then the function $(f)^{-1}\dd \mmM^{(\ell)} \to \mmM$ defined by $(f)^{-1}(\ueX)
 \df (f(\ueX))^{-1}$ belongs to $\ffF_{\ell,1}$ as well.
\end{itemize}
The main result of this section is the following

\begin{thm}{F}
Let $\ffF_0$ denote one of $\ffF_{\ell,1}$ or $\ffF_{\ell,1}^{loc}$. Assume $\eeE$ is a unital
$*$-subalgebra of $\ffF_0$ which satisfies each of the following conditions:
\begin{enumerate}[\upshape(E1)]\addtocounter{enumi}{-1}
\item $\pi^{(\ell)}_j \in \eeE$ for each positive integer $j \leqsl \ell$;
\item if all values of $f \in \eeE$ are invertible matrices and $(f)^{-1} \in \ffF_0$, then
 $(f)^{-1}$ belongs to $\eeE$;
\item whenever $f_n \in \eeE$ are uniformly bounded and converge pointwise to a function $f \in
 \ffF_0$, then $f \in \eeE$.
\end{enumerate}
Then $\eeE = \ffF_0$.
\end{thm}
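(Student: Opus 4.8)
The plan is to show that $\eeE$ separates enough of $\mmM^{(\ell)}$ to force it to be all of $\ffF_0$, using the structure of compatible functions together with condition (E2) to pass from uniformly bounded pointwise limits. The key conceptual point is that a compatible function $f\dd\mmM^{(\ell)}\to\mmM$ is completely determined by its restriction to a single ``generic'' $\mmM_n^\ell$ for each $n$, and on $\mmM_n^\ell$ compatibility under the unitary action means $f$ is equivariant for the conjugation action of $\uuU_n$, while compatibility under $\oplus$ (applied with $s=1$ but with $\ueX_1$ replaced by a direct sum sitting inside $\mmM^{(\ell)}$) pins down the values on reducible tuples in terms of the irreducible blocks. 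So first I would record this reduction: it suffices to prove that for each fixed $n$, every Borel (resp.\ locally bounded Borel) $\uuU_n$-equivariant map $\mmM_n^\ell\to\mmM_n$ which additionally respects the block decomposition lies in $\eeE|_{\mmM_n^\ell}$, and conversely that agreement on each $\mmM_n^\ell$ forces global equality in $\ffF_0$.

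Next I would build up a supply of functions in $\eeE$. Starting from the coordinate projections $\pi^{(\ell)}_j$ in (E0) and using that $\eeE$ is a unital $*$-subalgebra, all noncommutative $*$-polynomials in $X_1,\dots,X_\ell$ (evaluated entrywise on each $\mmM_n$) lie in $\eeE$. Using (E1) I can adjoin resolvent-type functions $\ueX\mapsto (p(\ueX)-\lambda I)^{-1}$ whenever the relevant polynomial stays invertible, and more importantly I can form spectral-projection-type functions: for a fixed $n$, applying the classical functional calculus coordinate-wise is not literally an algebra operation, so instead I would approximate characteristic functions of spectral sets by uniformly bounded sequences of $*$-polynomials in a single selfadjoint (or normal) combination and invoke (E2) to land the limit back in $\eeE$. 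This is the standard Stone--Weierstrass-plus-bounded-convergence mechanism used for Borel functional calculus, now carried out fibrewise over $\mmM_n$. Iterating, I get all functions of the form ``first conjugate the tuple into a normal form, then read off a Borel function of the resulting parameters'' — which, by the reduction in the first paragraph, exhausts $\ffF_{\ell,1}$ (resp.\ $\ffF_{\ell,1}^{loc}$, where local boundedness is preserved because every approximation can be truncated to $\bbB^{(\ell)}(R)$).

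The main obstacle I expect is the fibrewise Borel-functional-calculus step: unlike the classical case, the ``spectral data'' of a tuple $\ueX\in\mmM_n^\ell$ (eigenvalues of various polynomial combinations, the isotypic decomposition of $\HHh=\CCC^n$ under the algebra generated by $\ueX$) depends only measurably, not continuously, on $\ueX$, and a single algebra of polynomials cannot see the decomposition of $\mmM_n^\ell$ into the strata where the commutant of $\{X_1,\dots,X_\ell\}$ has a fixed isomorphism type. Overcoming this requires producing, inside $\eeE$, the indicator functions of these Borel strata, which is exactly where (E2) does the real work: one exhibits each stratum as a pointwise limit of uniformly bounded continuous (indeed polynomial) functions — e.g.\ via ranks of fixed matrix expressions built from $\ueX$, which are limits of polynomials in idempotent-approximating sequences — and then on each stratum the problem genuinely reduces to ordinary commutative Borel functional calculus for the finitely many normal generators of the center of the generated algebra. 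Once every stratum indicator and every such central parameter is available in $\eeE$, a final bounded-convergence argument (E2) assembles an arbitrary $f\in\ffF_0$ out of its restrictions to the strata, giving $\eeE=\ffF_0$.
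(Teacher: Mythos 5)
Your framework is in the right ballpark — restrict to irreducible tuples, exploit compatibility to handle the reducible ones, then combine a Stone--Weierstrass step with the bounded-convergence condition (E2) — and that is indeed the rough shape of the paper's argument. But two specific steps in your sketch do not survive scrutiny.

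First, your final reduction — ``on each stratum the problem genuinely reduces to ordinary commutative Borel functional calculus for the finitely many normal generators of the center of the generated algebra'' — is wrong for the crucial case. When $\ueX\in\mmM_n^\ell$ is irreducible, the algebra it generates is all of $\mmM_n$ and its center (equally, its commutant) is just $\CCC I_n$; there are no nontrivial normal central parameters, so commutative Borel functional calculus on the center produces only scalar multiples of $I_n$. Yet a compatible function may assign to $\ueX$ an arbitrary matrix in $\mmM_n$. What you actually need is to show that, on a transversal for the unitary orbits of irreducible tuples, $\eeE$ restricts to all degree-preserving Borel functions; that is a genuinely noncommutative statement, and in the paper it is handled by a $C^*$-algebra-valued Stone--Weierstrass theorem (a function space $C(\llL,\mmM_R)$) together with a monotone-class argument fueled by (E2) — not by commutative functional calculus over the center.

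Second, the transversal itself is the missing key lemma. Your phrase ``first conjugate the tuple into a normal form, then read off a Borel function of the resulting parameters'' presupposes that a Borel choice of normal form exists, i.e.\ that there is a Borel set $\kkK\subset\mmM^{(\ell)}$ meeting each unitary orbit of irreducible tuples in exactly one point. This is nontrivial: the paper devotes an entire proposition (construction of a $\sigma$-compact kernel, via an explicit unitary normalization and properness arguments) to it, and without it the assignment ``tuple $\mapsto$ normal form'' is not measurable and your (E2)-limit arguments have nothing to converge to. A related gap: once you work on the kernel, the two-variable separation hypothesis needed for the Stone--Weierstrass step (the existence, for inequivalent irreducible $\ueX,\ueY$, of a noncommutative $*$-polynomial that is $0$ at one and $I$ at the other) must be made uniformly locally bounded before (E1) and (E2) can be applied; the paper does this by first passing through the $\bB$-transform $X\mapsto X(I+|X|)^{-1}$, using (E1) to show the auxiliary map lies in $\eeE$. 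Your sketch never addresses why the building blocks stay inside $\eeE$ (rather than merely in $\ffF_0$), nor does it address how the locally bounded case $\ffF_{\ell,1}$ and the unbounded case $\ffF_{\ell,1}^{loc}$ are each recovered at the end (the paper again invokes $\bB$ and $(w)^{-1}$ for this). These are the points your proposal would need to fill in to become a proof.
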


Although the above result is intuitive, its proof is a little bit complicated and based on a certain
selector theorem which shall be formulated after introducing necessary notions.\par
A tuple $\ueX \in \mmM^{(\ell)}$ is \textit{reducible} if there exist $\ueA, \ueB \in \mmM^{(\ell)}$
and a unitary matrix $U \in \uuU_{d(\ueX)}$ for which $\ueX = U . (\ueA \oplus \ueB)$; otherwise
$\ueX$ is \textit{irreducible} (see \cite{pn6}; the latter is similar to the notion
of an irreducible representation of a $C^*$-algebra). Two tuples $\ueA$ and $\ueB$
in $\mmM^{(\ell)}$ are said to be \textit{unitarily equivalent}, in symbols $\ueA \equiv \ueB$,
if $\ueB = V . \ueA$ for some $V \in \uuU_{d(\ueA)}$ (in particular, $d(\ueA) = d(\ueB)$ provided
$\ueA \equiv \ueB$). A set $\kkK \subset \mmM^{(\ell)}$ is called by us a \textit{kernel}
of $\mmM^{(\ell)}$ if $\kkK$ is a selector for irreducible tuples (with respect to the unitary
action); that is, $\kkK$ is a kernel if it consists of irreducible tuples and for any irreducible
tuple $\ueX \in \mmM^{(\ell)}$ there is a \textbf{unique} tuple $\ueY \in \kkK$ such that $\ueX
\equiv \ueY$.\par
What we need is the next result. We recall that a subset of a topological space is
\textit{$\sigma$-compact} if it is a countable union of compact sets.

\begin{pro}{kernel}
For any $\ell$, $\mmM^{(\ell)}$ contains a $\sigma$-compact kernel.
\end{pro}

The proof presented below is in the spirit of the argument from \cite{rad}. The existence of kernels
that are $\ggG_{\delta}$-sets in $\mmM^{(\ell)}$ may readily be deduced from Corollary~1 in \S2
of Chapter~XIV in \cite{k-m} or from \cite{cas}.

\begin{proof}
Denote by $\QQq$ the collection of all polynomials in $2\ell$ noncommuting variables whose all
coefficients belong to $\QQQ+\textup{i}\QQQ$, and by $\iiI^{(\ell)}$ the set of all tuples $\ueX \in
\mmM^{(\ell)}$ that are irreducible. It is easy to show that $\iiI^{(\ell)}$ is an open set
in $\mmM^{(\ell)}$ and therefore
\begin{equation}\label{eqn:irr}
\iiI^{(\ell)} \textup{ is $\sigma$-compact.}
\end{equation}
Note that a tuple $(X_1,\ldots,X_{\ell}) \in \mmM_n^{\ell}\ (\subset \mmM^{(\ell)})$ is irreducible
iff the set
\begin{equation*}
\{p(X_1,\ldots,X_{\ell},X_1^*,\ldots,X_{\ell}^*)\dd\ p \in \QQq\}
\end{equation*}
is dense in $\mmM_n$. Further, let $\vvV_0$ be the set of all matrices $X = [x_{jk}] \in \mmM$ such
that:
\begin{enumerate}[\upshape({a}x1)]
\item $x_{jk} + \bar{x}_{kj} = 0$ for all distinct $j$ and $k$;
\item $\RE(x_{jj}) > \RE(x_{kk})$ whenever $j > k$;
\item $x_{1k}$ is a positive real number for $k > 1$.
\end{enumerate}
Let us note here the following property of $\vvV_0$:
\begin{itemize}
\item[(P0)] if $X \in \vvV_0$, $Y = [y_{jk}] \in \mmM$ and $U \in \uuU_{d(X)}$ are such that
 \begin{enumerate}[({a}x1')]\addtocounter{enumi}{-1}
 \item $Y = U . X$; and
 \item $y_{jk} + \bar{y}_{kj} = 0$ for all distinct $j$ and $k$; and
 \item $\RE(y_{jj}) \geqsl \RE(y_{kk})$ whenever $j \geqsl k$; and
 \item $y_{1k}$ is a nonnegative real number for $k > 1$,
 \end{enumerate}
 then $Y = X$ and $U$ is a scalar multiple of the unit matrix.
\end{itemize}
To show (P0), assume $X = [x_{jk}]$. Since $Y = U . X$, we conclude that $(n \df\,) d(X) = d(Y)$ and
$Y+Y^* = U . (X+X^*)$. Consequently, the spectra of $X+X^*$ and $Y+Y^*$ (that is, the sets of all
their eigenvalues) coincide. But (ax1) and (ax1') imply that both $X+X^*$ and $Y+Y^*$ are diagonal
matrices. So, (ax2) and (ax2') yield that $\RE(x_{jj}) = \RE(y_{jj})$ for each $j \in
\{1,\ldots,n\}$, which means that $X+X^* = Y+Y^*$. Hence $U$ commutes with a diagonal matrix all
of whose diagonal entries are different. We infer that $U$ is a diagonal matrix as well. So,
if $n > 1$ and $\lambda_1,\ldots,\lambda_n$ are the consecutive entries of the diagonal of $U$, then
$y_{1k} = \lambda_1 x_{1k} \bar{\lambda}_k$ for each $k > 1$. Since both the numbers $y_{1k}$ and
$x_{1k}$ are real and nonnegative and $x_{1k} \neq 0$ (see (ax3) and (ax3')), we see that $\lambda_k
= \lambda_1$, which means that $U$ is a scalar multiple of the identity matrix and, consequently,
$Y = X$. The proof of (P0) is complete.\par
Now put $\vvV \df \{U . X\dd\ X \in \vvV_0,\ U \in \uuU_{d(X)}\}$. We claim that $\vvV$ is open
in $\mmM$. To convince oneself of that, it suffices to check that $\vvV \cap \mmM_N$ is open
in $\mmM_N$ for each $N > 0$. Suppose, on the contrary, that $\vvV \cap \mmM_N$ is not open. This
means that there is $X \in \vvV \cap \mmM_N$ and a sequence of matrices $X_n \in \mmM_N \setminus
\vvV$ that converge to $X$. Since any selfadjoint matrix is unitarily equivalent to diagonal, we see
that there is $U_n \in \uuU_N$ such that $U_n . (X_n+X_n^*)$ is a diagonal matrix whose consecutive
diagonal entries are monotone increasing (that is, nondecreasing). Further, there is a diagonal
matrix $V_n \in \uuU_N$ for which all entries, apart from the first, of the first row of $(V_n U_n)
. X_n$ are real and nonnegative. Observe that then $(V_n U_n) . (X_n+X_n^*) = U_n . (X_n+X_n^*)$
(because $V_n$ and $U_n . (X_n+X_n^*)$ are diagonal). Passing to a subsequence, we may and do assume
that $W_n \df V_n U_n$ converge to $W \in \uuU_N$. Then the matrix $Y \df W . X$ coincides with
$\lim_{n\to\infty} W_n . X_n$ and therefore has properties (ax1')--(ax3'). But $X \in \vvV$ which
means that $X' \df U . X$ belongs to $\vvV_0$ for some $U \in \uuU_N$. So, $Y = (W U^{-1}) . X'$ and
(P0) shows that $Y = X'$. This means that $Y \in \vvV_0$, from which we infer that all but a finite
number of the matrices $W_n . X_n$ also belong to $\vvV_0$, which contradicts the assumption that
$X_n \notin \vvV$. So, $\vvV$ is indeed open.\par
Of course, $\vvV$ meets each of $\mmM_n$. Now arrange all members of $\QQq$ in a sequence $p_1,p_2,
\ldots$ and for any positive integer $n$ denote by $\iiI_n$ the set of all $\ueX =
(X_1,\ldots,X_{\ell}) \in \iiI^{(\ell)}$ such that $p_n(X_1,\ldots,X_{\ell},X_1^*,\ldots,X_{\ell}^*)
\in \vvV$. Since $\vvV$ is open, we see that each of $\iiI_n$ is open in $\iiI^{(\ell)}$. Moreover,
we conclude from the previous remarks that the sets $\iiI_n$ cover $\iiI^{(\ell)}$. We put $\ddD_n
\df \iiI_n \setminus \iiI_{n-1}$ where $\iiI_0 \df \varempty$. Observe that:
\begin{enumerate}[(D1)]
\item the sets $\ddD_n$ are pairwise disjoint and cover $\iiI^{(\ell)}$; and
\item each of $\ddD_n$ is $\sigma$-compact (by \eqref{eqn:irr}); and
\item if $\ueX \in \ddD_n$ and $U \in \uuU_{d(\ueX)}$, then $U . \ueX \in \ddD_n$ as well.
\end{enumerate}
Let $\ueX \in \ddD_n$. Since $p_n(\ueX) \in \vvV$, there is $U_{\ueX} \in \uuU_{d(\ueX)}$ for which
$U_{\ueX} . p_n(\ueX) \in \vvV_0$ (moreover, $U_{\ueX}$ is unique up to a scalar multiple, thanks
to (P0)). We now put
\begin{equation*}
\kkK \df \{U_{\ueX} . \ueX\dd\ \ueX \in \iiI^{(\ell)}\}.
\end{equation*}
We see that $\kkK$ consists of irreducible tuples and for any $\ueX \in \iiI^{(\ell)}$, $\kkK$
contains a tuple unitarily equivalent to $\ueX$ (by (D1)). Further, notice that if $U_{\ueX} . \ueX$
belongs to $\ddD_n$, then $\ueX \in \ddD_n$ as well (by (D3)) and hence $p_n(U_{\ueX} . \ueX) =
U_{\ueX} . p_n(\ueX) \in \vvV_0$. This shows that $p_n(\ueA) \in \vvV_0$ for any $\ueA \in \kkK \cap
\ddD_n$. Now assume $\ueA$ and $\ueB$ are two tuples in $\kkK$ that are unitarily equivalent. Then,
by (D1) and (D3), there is a unique $n$ such that $\ueA, \ueB \in \ddD_n$. Take $V \in
\uuU_{d(\ueA)}$ for which $\ueB = V . \ueA$. Then also $p_n(\ueB) = V . p_n(\ueA)$. But, as shown
above, both $p_n(\ueA)$ and $p_n(\ueB)$ belong to $\vvV_0$. Thus, we infer from (P0) that $V$ is
a scalar multiple of the unit matrix, and hence $\ueY = \ueX$. This shows that $\kkK$ is a kernel.
It remains to check that $\kkK \cap \ddD_m \cap \mmM_n^{\ell}$ is $\sigma$-compact for each $m$ and
$n$ (because then $\kkK$ itself be $\sigma$-compact). To this end, we consider the map
\begin{equation*}
\Phi\dd \uuU_n \times \mmM_n^{\ell} \ni (U;X_1,\ldots,X_{\ell}) \mapsto
U . p_m(X_1,\ldots,X_{\ell},X_1^*,\ldots,X_{\ell}^*) \in \mmM_n.
\end{equation*}
Notice that $\Phi$ is proper (i.e., the inverse image of a compact set in $\mmM_n$ under $\Phi$ is
compact). It is an easy observation that $\vvV_0$ is $\sigma$-compact (it is even locally compact)
and therefore $\llL \df \Phi^{-1}(\vvV_0)$ is also $\sigma$-compact. Consequently, the set $\{U .
\ueX\dd\ (U,\ueX) \in \llL\}$ is $\sigma$-compact as well. But, the last aforementioned set
coincides with $\kkK$ (because for any $\ueX \in \ddD_m$ the unitary matrix $U_{\ueX}$ is unique
up to a scalar multiple) and we are done.
\end{proof}

We shall also need the following lemma, which is a special case of a variation
of the Stone-Weierstrass theorem for $C^*$-algebras proved in \cite{pn2}.

\begin{lem}{SW}
Let $X$ be a compact Hausdorff space and let $\AAa$ be a unital $C^*$-algebra. Let $\EeE$ be
a $*$-subalgebra of $C(X,\AAa)$ such that for any two points $x$ and $y$ of $X$ there is $f \in
\EeE$ with $f(x) = 1$ and $f(y) = 0$. Then the closure of $\EeE$ in $C(X,\AAa)$ coincides with
the $*$-algebra $\Delta_2(\EeE)$ of all maps $u \in C(X,\AAa)$ such that for any $x, y \in X$ and
each $\epsi > 0$ there exists $v \in \EeE$ with $\|v(x) - u(x)\| < \epsi$ and $\|v(y) - u(y)\| <
\epsi$.
\end{lem}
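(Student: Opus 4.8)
Here is the plan I would follow.

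First note that $\overline{\EeE}\subseteq\Delta_2(\EeE)$ is immediate, since a uniform approximant of $u$ serves simultaneously at every pair of points, and that $\Delta_2(\EeE)$ is a norm-closed $*$-subalgebra of $C(X,\AAa)$: all algebraic operations are handled by pointwise estimates at the two prescribed points (for products, $\|v(x)v'(x)-u(x)u'(x)\|\leqsl\|v(x)\|\,\|v'(x)-u'(x)\|+\|v(x)-u(x)\|\,\|u'(x)\|$), and closedness follows by composing an $\tfrac\epsi2$-approximant of $u$ with an $\tfrac\epsi2$-approximant from $\EeE$. Because $\Delta_2(\,\cdot\,)$ does not change when $\EeE$ is replaced by its closure, I would from the outset assume that $\EeE$ is a $C^*$-subalgebra of $C(X,\AAa)$ (the separation hypothesis is inherited), so that functional calculus is available; the remaining task is then $\Delta_2(\EeE)\subseteq\EeE$. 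The case in which $X$ has at most one point is trivial, so assume it has at least two.

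The heart of the matter — and the step I expect to be the main obstacle — is to prove that $\EeE$ contains the whole algebra $C(X)\otimes 1$ of $1$-valued continuous functions; once this is known, a standard partition-of-unity argument (below) finishes everything. I would approach it as follows. First, $\EeE$ is unital: picking for each $x$ some $y\neq x$ and $f_x\in\EeE$ with $f_x(x)=1$, $f_x(y)=0$, the positive elements $f_x^*f_x$ equal $1$ at $x$, hence are $\geqsl\tfrac12\cdot 1$ on a neighborhood of $x$, so by compactness a finite sum $e:=\sum_i f_{x_i}^*f_{x_i}\in\EeE$ satisfies $e\geqsl\tfrac12\cdot 1$ everywhere; thus $e$ is invertible in $C(X,\AAa)$ and $e^{-1}=\psi(e)\in C^*(e)\subseteq\EeE$ for $\psi$ with $\psi(0)=0$ and $\psi(t)=t^{-1}$ on $\sigma(e)$, so the constant $1=e\,e^{-1}$ lies in $\EeE$. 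Secondly, from the normalized separating functions $a_{x,y}:=\min(f_{x,y}^*f_{x,y},1)\in\EeE$ (satisfying $0\leqsl a_{x,y}\leqsl 1$, $a_{x,y}(x)=1$, $a_{x,y}(y)=0$) one forms, for any finite $x_1,\dots,x_n$, the ``Lagrange bumps'' $h_i:=w_i^*w_i$ with $w_i:=\prod_{j\neq i}a_{x_i,x_j}$; these lie in $\EeE$, satisfy $0\leqsl h_i\leqsl 1$ and $h_i(x_j)=\delta_{ij}\cdot 1$, and via a compactness argument give rise to $\AAa$-valued bumps subordinate to any two disjoint closed subsets of $X$. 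The one thing still missing is to \emph{scalarize} such $\AAa$-valued bumps — i.e.\ to produce, for each pair of distinct points, a function in $\EeE$ that is \emph{$1$-valued} (lies in $C(X)\otimes 1$) and separates the two points; this is exactly what the quoted variation of the Stone--Weierstrass theorem from \cite{pn2} delivers (equivalently, when $\AAa$ is finite dimensional, that a trace-preserving conditional expectation of $C(X,\AAa)$ onto $C(X)\otimes 1$ carries $\EeE$ into itself). Granting this, $\EeE\cap(C(X)\otimes 1)$ is a unital $*$-subalgebra of $C(X)\otimes 1\cong C(X)$ that separates the points of $X$, hence equals $C(X)\otimes 1$ by the classical Stone--Weierstrass theorem and the closedness of $\EeE$.

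Finally, assuming $C(X)\otimes 1\subseteq\EeE$, the inclusion $\Delta_2(\EeE)\subseteq\EeE$ follows by a routine gluing. Let $u\in\Delta_2(\EeE)$ and $\epsi>0$, and fix $a\in X$. For each $b\in X$ the definition of $\Delta_2$ gives $v_b\in\EeE$ with $\|v_b(a)-u(a)\|<\epsi$ and $\|v_b(b)-u(b)\|<\epsi$, hence, by continuity, $\|v_b-u\|<\epsi$ on an open set $O_b\ni b$. Choose a finite subcover $O_{b_1},\dots,O_{b_m}$ of $X$ and a partition of unity $r_1\otimes 1,\dots,r_m\otimes 1\in C(X)\otimes 1\subseteq\EeE$ subordinate to it ($r_i\geqsl 0$, $\sum_i r_i\equiv 1$, $\supp r_i\subseteq O_{b_i}$), and put $v:=\sum_i(r_i\otimes 1)\,v_{b_i}\in\EeE$. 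Then for every $z\in X$,
\begin{equation*}
\|v(z)-u(z)\|=\Bigl\|\textstyle\sum_i r_i(z)\bigl(v_{b_i}(z)-u(z)\bigr)\Bigr\|\leqsl\max\{\,\|v_{b_i}(z)-u(z)\|:\ r_i(z)\neq 0\,\}<\epsi,
\end{equation*}
since $r_i(z)\neq 0$ forces $z\in\supp r_i\subseteq O_{b_i}$; it is essential here that the $r_i$ are \emph{scalar}, so that the $\AAa$-valued errors do not get amplified by the multiplicity of the cover. Hence $u\in\overline{\EeE}=\EeE$, as required. To sum up: the whole weight of the argument rests on the scalarization step, i.e.\ on getting $C(X)\otimes 1$ inside $\overline{\EeE}$, for which the deeper Stone--Weierstrass-type input of \cite{pn2} is used.
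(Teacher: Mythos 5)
The paper itself gives no proof of this lemma: it is stated as an imported fact, ``a special case of a variation of the Stone--Weierstrass theorem for $C^*$\hyp{}algebras proved in [pn2],'' so there is no in-paper argument for you to match. Your proposal is likewise not self-contained, and to your credit you say so explicitly: at the decisive step --- establishing $C(X)\otimes 1\subseteq\overline{\EeE}$ --- you invoke precisely the result from [pn2] of which the lemma is declared to be a special case. Everything surrounding that step in your write-up is sound: the easy inclusion $\overline{\EeE}\subseteq\Delta_2(\EeE)$; the verification that $\Delta_2(\EeE)$ is a closed $*$-subalgebra and is unchanged upon replacing $\EeE$ by $\overline{\EeE}$, so one may assume $\EeE$ is a $C^*$-subalgebra; the unitality of $\EeE$ via the everywhere-invertible positive sum $e=\sum_i f_{x_i}^*f_{x_i}$; and the concluding partition-of-unity gluing, which works precisely because the $r_i$ are scalar-valued so the overlaps of the cover do not amplify the $\AAa$-valued errors. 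This reduction is genuinely clarifying, but it does not close the circle.

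The concrete gap is the scalarization claim. Your ``Lagrange bump'' construction produces $h_i\in\EeE$ with $h_i(x_j)=\delta_{ij}\cdot 1$ at finitely many prescribed points, but these are unconstrained $\AAa$-valued away from those points; nothing in the construction forces any member of $\overline{\EeE}$ other than the constant $1$ to take values in $\CCC\cdot 1$. You do not show that the hypotheses imply $C(X)\otimes 1\subseteq\overline{\EeE}$, and the alternative gloss you offer (stability of $\EeE$ under a trace-preserving conditional expectation onto $C(X)\otimes 1$) is again a restatement, not an argument. So what you have is a correct \emph{reduction} of the lemma to a concrete claim you attribute to [pn2], not a proof; the external dependence is the same as in the paper, merely relocated. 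If you wanted this to stand on its own, the scalarization step is exactly what you would have to supply.
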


The reader interested in other results in a similar spirit as above is referred to Theorem~1.4
in \cite{fel} or Corollary~11.5.3 in \cite{dix}. For other variations of the Stone-Weierstrass
thorem settled in $C^*$-algebras, consult, e.g., \cite{gli}, \S4.7 in \cite{sak}, \cite{lon}
or \cite{pop}.

\begin{pro}{F}
Let $\kkK$ be a $\sigma$-compact kernel of $\mmM^{(\ell)}$ and let $\ffF_0$ denote either
the $*$-algebra of all Borel functions $u\dd \kkK \to \mmM$ such that
\begin{equation}\label{eqn:compat}
d(u(\ueX)) = d(\ueX) \qquad (\ueX \in \kkK)
\end{equation}
or the $*$-algebra of all locally bounded such functions. If $\EeE$ is a unital $*$-subalgebra
of $\ffF_0$ such that $\pi^{(\ell)}_k\bigr|_{\kkK} \in \EeE$ for any $k \leqsl \ell$ and conditions
\textup{(E1)--(E2)} hold, then $\EeE = \ffF_0$.
\end{pro}
\begin{proof}
Below we shall involve the concept of \textit{$\bB$-transform}, introduced by us in \cite{pn1}.
Namely, for any matrix $X \in \mmM_n$, let $\bB(X) \df X (I_n + |X|)^{-1}$ (where $|X| \df
\sqrt{X^* X}$). The $\bB$-transform for tuples of matrices (or operators) is defined coordinatewise,
that is, $\bB(X_1,\ldots,X_{\ell}) \df (\bB(X_1),\ldots,\bB(X_{\ell}))$. What is relevant for us is
that:
\begin{enumerate}[($\bB$1)]
\item for any $\ueX \in \mmM^{(\ell)}$, $\|\bB(\ueX)\| \leqsl 1$;
\item if $\ueX \in \mmM^{(\ell)}$ is irreducible, then so is $\bB(\ueX)$;
\item two tuples $\ueX$ and $\ueY$ in $\mmM^{(\ell)}$ are unitarily equivalent iff so are
 $\bB(\ueX)$ and $\bB(\ueY)$.
\end{enumerate}
We shall use these properties below, where $\bB$ shall be considered as a function
of $\kkK$ into $\mmM^{(\ell)}$. We shall also use $j$ to denote the unit of $\EeE$ ($j(\ueX) =
I_{d(\ueX)}$ for any $\ueX \in \kkK$). More generally, for any Borel set $\aaA$ in $\kkK$, we denote
by $j_{\aaA}$ the function of $\kkK$ into $\mmM$ that coincides with $j$ on $\aaA$ and vanishes
at each point of $\kkK \setminus \aaA$.\par
We fix a compact subset $\llL$ of $\kkK$. Our aim is to show that
\begin{itemize}
\item[(L0)] $j_{\llL} u \in \EeE$ for any bounded Borel function $u\dd \kkK \to \mmM$ that
 satisfies \eqref{eqn:compat}.
\end{itemize}
To this end, first observe that
\begin{itemize}
\item[($\star$)] for any two distinct element $\ueX$ and $\ueY$ in $\kkK$, there is a polynomial $p$
 in $2\ell$ noncommuting variables with $p(\bB(\ueX),\bB(\ueX)^*) = 0$ and $p(\bB(\ueY),\bB(\ueY)^*)
 = I_{d(\ueY)}$
\end{itemize}
(because $\bB(\ueX)$ and $\bB(\ueY)$ are irreducible and unitarily inequivalent, see ($\bB$2) and
($\bB$3); cf.\ Proposition~4.2.5 in \cite{dix}). Since $\pi^{(\ell)}_k\bigr|_{\kkK}$ belongs
to $\EeE$ for any $k$, (E1)--(E2) imply that
\begin{equation}\label{eqn:sub-b}
\pi^{(\ell)}_k \circ \bB \in \EeE
\end{equation}
as well. Indeed, $j+\bigl(\pi^{(\ell)}_k\bigr)^* \pi^{(\ell)}_k$ belongs to $\EeE$ and has all its
values invertible. Thus, it follows from (E1) that the function
\begin{equation*}
v_k\dd \kkK \ni (X_1,\ldots,X_{\ell}) \mapsto (I_{d(X_k)} + X_k^* X_k)^{-1} \in \mmM
\end{equation*}
is a member of $\EeE$. Observe that $v_k$ is a nonnegative (selfadjoint) element
of the $C^*$-algebra $\EeE_{bd}$ of all bounded functions in $\EeE$ whose norms do not exceed $1$
($\EeE_{bd}$ is indeed a $C^*$-algebra thanks to (E2)). So, $v_k' \df u[v_k]$ belongs to $\EeE_{bd}$
where $u\dd [0,1] \to [0,\infty)$ is given by $u(t) = \frac{\sqrt{t}}{\sqrt{t}+\sqrt{1-t}}$. Direct
calculations show that $v_k'(X_1,\ldots,X_{\ell}) = (I_{d(X_k)} + |X_k|)^{-1}$. Consequently,
$\pi^{(\ell)}_k\bigr|_{\kkK} v_k' \in \EeE$, which is equivalent to \eqref{eqn:sub-b}. Having this,
we conclude that the function $\tilde{p}\dd \kkK \ni \ueX \mapsto p(\bB(\ueX),\bB(\ueX)^*) \in \mmM$
belongs to $\EeE_{bd}$ when $p$ is any polynomial in $2\ell$ noncommuting variables. Replacing
$\tilde{p}$ by $\frac12(\tilde{p}+\tilde{p}^*)$, we see that for any $\ueX \in \llL$ and $\ueY \in
\kkK \setminus \llL$ there is a selfadjoint \textbf{continuous} function $f_{\ueX,\ueY} \in
\EeE_{bd}$ such that $f_{\ueX,\ueY}(\ueX) = 0$ and $f_{\ueX,\ueY}(\ueY) = I_{d(\ueY)}$ (see
($\star$)). We may also assume that $0 \leqsl f_{\ueX,\ueY} \leqsl j$, and $f_{\ueX,\ueY}(\ueZ) =
0$ and $f_{\ueX,\ueY}(\ueW) = I_{d(\ueW)}$ for $\ueZ$ and $\ueW$ from, respectively, some
neighbourhoods $U_{\ueX,\ueY}$ of $\ueX$ and $V_{\ueX,\ueY}$ of $\ueY$ (because we may replace,
if needed, $f_{\ueX,\ueY}$ by $w[f_{\ueX,\ueY}]$ where $w(t) = \max(\min(3t-1,1),0)$). Now
it follows from the compactness of $\llL$ that, when $\ueY$ is fixed, there are a finite number
of points $\ueX_1,\ldots,\ueX_n \in \kkK$ such that $\llL \subset \bigcup_{k=1}^n U_{\ueX_k}$.
We put $g_{\ueY} \df j - (\prod_{k=1}^n f_{\ueX_k,\ueY})^* (\prod_{k=1}^n f_{\ueX_k,\ueY})$.
Observe that $g_{\ueY}$ is a selfadjoint member of $\EeE$, $0 \leqsl g_{\ueY} \leqsl j$,
$g_{\ueY}(\ueZ) = I_{d(\ueZ)}$ for all $\ueZ \in \kkK$ and $g_{\ueY}$ vanishes at each point
of the neighbourhood $W_{\ueY} \df \bigcap_{k=1}^n V_{\ueX_k,\ueY}$ of $\ueY$. Further, we infer
from the separability of $\kkK$ there there is a sequence $\ueY_1,\ueY_2,\ldots$ of elements
of $\kkK \setminus \llL$ for which $\bigcup_{k=1}^{\infty} W_{\ueY_k} = \kkK \setminus \llL$. Notice
that then the functions $h_n \df \prod_{k=1}^n g_{\ueY_k}$ belong to $\EeE$, are uniformly bounded
and converge pointwise to $j_{\llL}$. So, (E2) yields that
\begin{equation}\label{eqn:jL}
j_{\llL} \in \EeE.
\end{equation}
Now let $\EeE'$ be the collection of all continuous functions from $\llL$ into $\mmM$ which are
restrictions of some functions from $\EeE_{bd}$. We want to show that $\EeE'$ coincides with
the $C^*$-algebra $\DdD$ of all continuous functions $u\dd \llL \to \mmM$ for which
\eqref{eqn:compat} is fulfilled for any $\ueX \in \llL$. To this end, take $N > 0$ such that $\llL
\subset \bigcup_{n=1}^N \mmM_n$, put $R \df N!$ and $\AAa \df \mmM_R$. We define a $*$-homomorphism
$\Phi\dd \DdD \to C(\llL,\AAa)$ by
\begin{equation*}
(\Phi(f))(\ueX) \df \underbrace{f(\ueX) \oplus \ldots \oplus f(\ueX)}_{R/d(\ueX)}.
\end{equation*}
Since $\Phi$ is isometric, it is enough to check that $\Phi(\EeE') = \Phi(\DdD)$. The previous
reasoning (which starts from ($\star$)) shows that for any distinct elements $\ueX$ and $\ueY$
of $\llL$ there is a function $f \in \EeE_{bd}$ with $f(\ueX) = I_{d(\ueX)}$ and $f(\ueY) = 0$. Then
$\Phi(f\bigr|_{\llL})(\ueX) = I_R$ and $\Phi(f\bigr|_{\llL})(\ueY) = 0$. So, \LEM{SW} implies that
$\Phi(\EeE')$ coincides with $\Delta_2(\Phi(\EeE')) \subset C(\llL,\AAa)$ ($\EeE'$ is
a $C^*$-algebra as the image of a $C^*$-subalgebra of $\EeE_{bd}$ under a $*$-homomorphism). But,
since $\bB(\ueX)$ is irreducible, for each matrix $T \in \mmM_{d(\ueX)}$ we may find a polynomial
$p$ in $2\ell$ noncommuting variables such that $\tilde{p}(\ueX) = T$. All these remarks show that
\begin{equation*}
\{(u(\ueX),u(\ueY))\dd\ u \in \Phi(\EeE')\} = \{(v(\ueX),v(\ueY))\dd\ v \in \Phi(\DdD)\}
\end{equation*}
and consequently $\Delta_2(\Phi(\EeE')) = \Phi(\DdD)$. So, $\EeE' = \DdD$. Combining this connection
with \eqref{eqn:jL}, we obtain
\begin{itemize}
\item[($\star\star$)] if $u \in \ffF_0$ vanishes at each point off $\llL$ and its restriction
 to $\llL$ is continuous, then $u \in \EeE$.
\end{itemize}
Further, denote by $\Mm$ the family of all Borel sets $\aaA$ in $\llL$ for which $j_{\aaA} \in
\EeE$. Since $\EeE$ is an algebra that satisfies (E2) and contains $j_{\llL}$, we readily get that
$\Mm$ is a $\sigma$-algebra of subsets of $\llL$. So, to conclude that $\Mm = \Bb(\llL)$, it is
enough to show that each closed subset of $\llL$ belongs to $\Mm$. But this simply follows from
($\star\star$). Indeed, if $\llL_0$ is a closed set in $\llL$, there is a sequence of continuous
functions $v_n\dd \llL \to [0,1]$ that converge pointwise to the characteristic function
of $\llL_0$. Then each of the functions $q_n\dd \kkK \to \mmM$ given by $q_n(\ueX) = v_n(\ueX)
j(\ueX)$ for $\ueX \in \llL$ and $q_n(\ueX) = 0$ otherwise belongs to $\EeE$, thanks
to ($\star\star$), and hence $j_{\llL_0} \in \EeE$, by (E2). So, $\Mm = \Bb(\llL)$ or,
equivalently,
\begin{equation}\label{eqn:M}
j_{\aaA} \in \EeE \qquad (\aaA \in \Bb(\llL)).
\end{equation}
Finally, let $u\dd \kkK \to \mmM$ be a bounded Borel function that satisfies \eqref{eqn:compat}. For
fixed $\epsi > 0$, we may find sequences $\bbB_1,\bbB_2,\ldots$ of pairwise disjoint Borel subsets
of $\llL$ that cover $\llL$ and $T_1,T_2,\ldots$ of matrices such that for any $n$ and each $\ueX
\in \bbB_n$, $d(\ueX) = d(T_n)$ and $\|u(\ueX) - T_n\| \leqsl \epsi$ (this implies that
$\sup_{n\geqsl1} \|T_n\| < \infty$). For each $n$, the function $\xi_n\dd \kkK \to \mmM$ which is
constantly equal to $T_n$ on $\bbB_n$ and vanishes at each point off $\bbB_n$ belongs to $\EeE$,
thanks to ($\star\star$) and \eqref{eqn:M}. We define $u_{\epsi}\dd \kkK \to \mmM$ as the pointwise
limit of the series $\sum_{n=1}^{\infty} \xi_n$. Since the partial sums of the aforementioned series
are uniformly bounded, we infer from (E2) that $u_{\epsi} \in \EeE$. Finally, it follows from our
construction that $\|j_{\llL} u - u_{\epsi}\| \leqsl \epsi$ and, consequently, $j_{\llL} u \in
\EeE$ (again by (E2)). This finishes the proof of (L0).\par
Having (L0), we can now easily finish the proof of the proposition. First take an arbitrary bounded
Borel function $u\dd \kkK \to \mmM$ that satisfies \eqref{eqn:compat}. Since $\kkK$ is
$\sigma$-compact, we may express $\kkK$ as the union of an ascending sequence of compact subsets
$\llL_1,\llL_2,\ldots$ of $\kkK$. Then $j_{\llL_n} u \in \EeE$, by (L0) and consequently $u$,
as the pointwise limit of a uniformly bounded sequence of the functions $j_{\llL_n} u$, belongs
to $\EeE$ as well (again by (E2)).\par
Finally, we take an arbitrary Borel function $u \in \ffF_0$. Let $v\dd \kkK \to \mmM$ and $w\dd \kkK
\to \mmM$ be defined by $v(\ueX) \df \bB(u(\ueX))$ and $w(\ueX) \df (I_{d(\ueX)} + |u(\ueX)|)^{-1}$.
The previous paragraph proves that $v, w \in \EeE$. Now (E1) implies that also $(w)^{-1}$ belongs
to $\EeE$. Since $u = v \cdot (w)^{-1}$, we conclude that $u \in \EeE$ and we are done.
\end{proof}

\begin{cor}{ext}
Let $\kkK$ be a $\sigma$-compact kernel of $\mmM^{(\ell)}$. Every Borel function $u\dd \kkK \to
\mmM$ that satisfies \eqref{eqn:compat} admits a unique extension $\hat{u}$ to a function
in $\ffF_{\ell,1}$. Moreover, the assignment $u \mapsto \hat{u}$ establishes a one-to-one
correspondence between Borel functions $u\dd \kkK \to \mmM$ satisfying \eqref{eqn:compat} and
functions from $\ffF_{\ell,1}$. Moreover,
\begin{enumerate}[\upshape(ext1)]
\item $\hat{u} \in \ffF_{\ell,1}^{bd}$ \textup{(}resp.\ $\hat{u} \in \ffF_{\ell,1}^{loc}$\textup{)}
 iff $u$ is bounded \textup{(}resp.\ $u$ is locally bounded\textup{)}, and $\|\hat{u}\| = \|u\|$;
\item Borel functions $u_n\dd \kkK \to \mmM$ \textup{(}for which \eqref{eqn:compat} hold\textup{)}
 converge pointwise to $u\dd \kkK \to \mmM$ \iaoi{} the functions $\hat{u}_n$ converge so
 to $\hat{u}$.
\end{enumerate}
\end{cor}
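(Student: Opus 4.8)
The plan is to build the extension $\hat u$ of a Borel function $u\dd\kkK\to\mmM$ satisfying \eqref{eqn:compat} directly, using the fact that $\kkK$ is a selector for irreducibles together with the tower structure of $\mmM^{(\ell)}$. First I would record the structural fact on which everything rests: every tuple $\ueX\in\mmM^{(\ell)}$ decomposes, by finite-dimensional $C^*$-theory (Wedderburn/decomposition into irreducibles), as $\ueX\equiv\bigoplus_{i=1}^s \ueY_i$ with each $\ueY_i$ irreducible, hence each $\ueY_i\equiv\ueZ_i$ for a \textbf{unique} $\ueZ_i\in\kkK$; the multiset $\{\ueZ_1,\dots,\ueZ_s\}$ is uniquely determined by $\ueX$ (uniqueness of the decomposition into irreducibles up to unitary equivalence and permutation). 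Given this, define $\hat u(\ueX)\df W\,.\,\bigl(\bigoplus_{i=1}^s u(\ueZ_i)\bigr)$ where $W$ is any unitary with $\ueX=W\,.\,\bigl(\bigoplus_i\ueZ_i\bigr)$. The first real task is \emph{well-definedness}: two such unitaries $W,W'$ differ by a unitary $V$ in the commutant of $\bigoplus_i\ueZ_i$, and I must check $V$ leaves $\bigoplus_i u(\ueZ_i)$ invariant. This follows because $V$ is block-diagonal with respect to the isotypic decomposition (each $\ueZ_i$ being irreducible, its commutant inside its isotypic component is scalar up to the multiplicity algebra), and $u$ assigns the \emph{same} matrix $u(\ueZ_i)$ to equal $\ueZ_i$'s, so within each isotypic block $V$ commutes with $\bigoplus u(\ueZ_i)$ by Schur's lemma applied coordinatewise. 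Here \eqref{eqn:compat}, i.e.\ $d(u(\ueZ_i))=d(\ueZ_i)$, is exactly what makes the direct sum $\bigoplus_i u(\ueZ_i)$ a matrix of the right size $d(\ueX)$, so $d(\hat u(\ueX))=d(\ueX)$.

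Next I would verify that $\hat u\in\ffF_{\ell,1}$, i.e.\ that $\hat u$ is compatible and Borel. Compatibility — $d(\hat u(\ueX))=d(\ueX)$, and $\hat u(U.(\bigoplus_j\ueX_j))=U.(\bigoplus_j\hat u(\ueX_j))$ — is immediate from the definition once well-definedness is in hand: decomposing each $\ueX_j$ into its $\kkK$-constituents and concatenating gives a $\kkK$-decomposition of $\bigoplus_j\ueX_j$, and the formula for $\hat u$ respects this concatenation and the extra unitary $U$ by construction. That $\hat u\bigr|_\kkK=u$ is clear since an irreducible tuple in $\kkK$ is its own (length-one) decomposition. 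Uniqueness of the extension within $\ffF_{\ell,1}$ is then forced: any compatible $g$ extending $u$ must satisfy $g(W.(\bigoplus_i\ueZ_i))=W.(\bigoplus_i g(\ueZ_i))=W.(\bigoplus_i u(\ueZ_i))=\hat u(\ueX)$. Conversely, restriction to $\kkK$ inverts the extension map (a compatible $f\in\ffF_{\ell,1}$ is determined on all of $\mmM^{(\ell)}$ by its values on $\kkK$, again by the decomposition argument), so $u\mapsto\hat u$ is the claimed bijection.

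The hard part will be the \emph{Borel measurability} of $\hat u$. The definition of $\hat u(\ueX)$ involves choosing a decomposition of $\ueX$ into irreducibles and the matching $\kkK$-representatives $\ueZ_i$ together with an intertwining unitary $W$ — a priori a non-canonical choice. The plan is to show these choices can be made Borel-measurably in $\ueX$. Working on each stratum $\mmM_n^\ell$ separately (it suffices, as $\mmM^{(\ell)}$ is the topological disjoint union), I would stratify further by the multiplicity type of the decomposition (finitely many possibilities for fixed $n$), each stratum being Borel; on each such stratum the assignment $\ueX\mapsto(\ueZ_1,\dots,\ueZ_s;W)$ is a Borel selection for the Borel equivalence relation "unitary equivalence", whose existence is guaranteed by a measurable-selection theorem of the same type already invoked in \PRO{kernel} (e.g.\ the Kuratowski–Ryll-Nardzewski selection theorem, or the argument pattern of \cite{rad}); crucially $\kkK$ is $\sigma$-compact hence Borel, so "the unique $\ueZ\in\kkK$ with $\ueZ\equiv\ueY$" is a Borel function of $\ueY$. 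Composing the Borel selections with the continuous operations $(\,.\,)$ and $\oplus$ and with the Borel function $u$ yields that $\hat u$ is Borel on each stratum, hence Borel. Finally, (ext1) and (ext2) are bookkeeping: $\|\hat u(\ueX)\|=\max_i\|u(\ueZ_i)\|$ since a direct sum and a unitary conjugation preserve operator norm, giving $\|\hat u\|=\sup_\kkK\|u\|=\|u\|$ and the equivalences for boundedness / local boundedness (for the local version, note $\|\bigoplus_i\ueZ_i\|=\max_i\|\ueZ_i\|\leqsl\|\ueX\|$, so constituents of $\ueX\in\bbB^{(\ell)}(R)$ again lie in $\bbB^{(\ell)}(R)$); and for (ext2), pointwise convergence $u_n\to u$ on $\kkK$ transfers through the fixed (for each $\ueX$) decomposition data to $\hat u_n(\ueX)\to\hat u(\ueX)$, while the converse follows by restricting to $\kkK$.
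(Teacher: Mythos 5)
Your construction is correct and takes a genuinely different route from the paper's. The paper simply defines $\EeE$ to be the set of all Borel $u\dd\kkK\to\mmM$ satisfying \eqref{eqn:compat} that \emph{do} admit an extension $\hat u\in\ffF_{\ell,1}$, checks (via the decomposition \eqref{eqn:pd}) that $\EeE$ is a unital $*$-subalgebra of $\ffF_0$ containing the $\pi^{(\ell)}_k\bigr|_{\kkK}$ and satisfying (E1)--(E2), and then invokes \PRO{F} to conclude $\EeE=\ffF_0$; the Stone--Weierstrass / $\bB$-transform machinery of \PRO{F} thus does the real work of producing the extension, indirectly. You instead build $\hat u(\ueX)=W.(\bigoplus_i u(\ueZ_i))$ explicitly, which is arguably more transparent, and your well-definedness argument via isotypic blocks and Schur's lemma is sound (the permutation ambiguity is absorbed, and the fact that $\kkK$ is a selector ensures equal constituents carry literally equal values $u(\ueZ_i)$). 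The one point to be careful about is the Borel-measurability step: the Kuratowski--Ryll-Nardzewski theorem you cite requires a closed-valued multifunction, but the fiber $\{W\in\uuU_n\dd W^{-1}.\ueX\in\ssS\}$ (with $\ssS$ the set of direct sums of kernel elements) is only $\sigma$-compact, since $\kkK$ is merely $\sigma$-compact, not closed. What actually applies is a uniformization theorem for Borel sets with $\sigma$-compact sections (Arsenin--Kunugui), which does yield a Borel selection of $W$ (and hence of the $\ueZ_i$) $\ell$-tuple by $\ell$-tuple; this is where the $\sigma$-compactness hypothesis on $\kkK$ earns its keep in your argument, just as it does in the paper's through \PRO{F}. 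Trade-off: the paper's proof is essentially free given that \PRO{F} is needed anyway for \THM{F}, while yours is self-contained and explicit but pushes the difficulty into a descriptive-set-theoretic selection lemma of comparable depth.
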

\begin{proof}
First of all, note that for any $\ueX \in \mmM^{(\ell)}$ there are $U \in \uuU_{d(\ueX)}$ and
$\ueX_1,\ldots,\ueX_p \in \kkK$ (for some $p > 0$) such that
\begin{equation}\label{eqn:pd}
\ueX = U . (\ueX_1 \oplus \ldots \oplus \ueX_p).
\end{equation}
This property implies that two functions from $\ffF_{\ell,1}$ coincide provided their restrictions
to $\kkK$ do so. Now let $\EeE$ consists of all Borel functions $u\dd \kkK \to \mmM$ that satisfy
\eqref{eqn:compat} and extend to some (necessarily unique) function $\hat{u} \in \ffF_{\ell,1}$.
The decomposition \eqref{eqn:pd} (for any $\ueX$) shows (ext1) for any $u \in \EeE$ and enables
proving that $\EeE$ satisfies all assumptions of \PRO{F}. So, $\EeE$ constists of all Borel
functions $u\dd \kkK \to \mmM$ that satisfy \eqref{eqn:compat} and, consequently, (ext2) holds
(again by \eqref{eqn:pd}), which finishes the proof.
\end{proof}

\begin{proof}[Proof of \THM{F}]
By \PRO{kernel}, there is a $\sigma$-compact kernel $\kkK$ for $\mmM^{(\ell)}$. Put $\EeE' \df
\{u\bigr|_{\kkK}\dd\ u \in \EeE\}$ and similarly $\ffF_0' \df \{u\bigr|_{\kkK}\dd\ u \in \ffF_0\}$,
and observe that conditions (E1)--(E2) are fulfilled for $\EeE'$ and $\ffF_0'$ (by \COR{ext}). So,
\PRO{F} yields that $\EeE' = \ffF_0'$. Thus, we conclude from \COR{ext} that $\EeE = \ffF_0$.
\end{proof}

The next result is an immediate consequence of \COR{ext}. The proof is skipped.

\begin{cor}{center}
Let $\ffF_0$ denote one of $\ffF_{\ell,1}$, $\ffF_{\ell,1}^{loc}$ or $\ffF_{\ell,1}^{bd}$.
The center $\ZZz(\ffF_0)$ of $\ffF_0$ coincides with the set of all functions $u \in \ffF_0$ such
that $u(\ueX)$ is a scalar multiple of the unit matrix for any irreducible $\ell$-tuple $\ueX \in
\mmM^{(\ell)}$.
\end{cor}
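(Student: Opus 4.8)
The plan is to use \COR{ext} to transport the whole question into an elementary computation inside an algebra of $\mmM$-valued Borel functions on a kernel, where the centre is visibly the set of functions taking only scalar-matrix values. First I would fix, by \PRO{kernel}, a $\sigma$-compact kernel $\kkK$ of $\mmM^{(\ell)}$, and let $\ggG_0$ denote the $*$-algebra of all Borel functions $u\dd\kkK\to\mmM$ satisfying \eqref{eqn:compat} (resp.\ of all such locally bounded functions; resp.\ of all such bounded functions), the choice matching whichever of $\ffF_{\ell,1}$, $\ffF_{\ell,1}^{loc}$, $\ffF_{\ell,1}^{bd}$ is denoted by $\ffF_0$. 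All algebraic operations on $\ffF_{\ell,1}$ and on $\ggG_0$ are defined pointwise, and by \COR{ext}, together with (ext1), the extension map $u\mapsto\hat{u}$ is a bijection of $\ggG_0$ onto $\ffF_0$ whose inverse is the restriction $w\mapsto w\bigr|_{\kkK}$ (that this restriction is a genuine two-sided inverse uses, as noted in the proof of \COR{ext}, that two functions from $\ffF_{\ell,1}$ agreeing on $\kkK$ are equal). Uniqueness of the extension then forces $u\mapsto\hat{u}$ to be a $*$-algebra isomorphism, so in particular $\ZZz(\ffF_0)=\{\hat{u}\dd\ u\in\ZZz(\ggG_0)\}$, and it suffices to compute $\ZZz(\ggG_0)$.

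Next I would identify $\ZZz(\ggG_0)$. One inclusion is immediate: if $u\in\ggG_0$ and $u(\ueX)$ is a scalar multiple of $I_{d(\ueX)}$ for every $\ueX\in\kkK$, then $u(\ueX)v(\ueX)=v(\ueX)u(\ueX)$ for all $v\in\ggG_0$ and all $\ueX\in\kkK$, hence $u\in\ZZz(\ggG_0)$. For the converse, fix $u\in\ZZz(\ggG_0)$, a point $\ueX_0\in\kkK$ and an arbitrary matrix $T\in\mmM_{d(\ueX_0)}$, and define $v\dd\kkK\to\mmM$ by $v(\ueX_0)\df T$ and $v(\ueX)\df 0_{d(\ueX)}$ for $\ueX\neq\ueX_0$. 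This $v$ is Borel (it is continuous off the single point $\ueX_0$), satisfies \eqref{eqn:compat}, and is bounded, so $v\in\ggG_0$ in each of the three cases; centrality of $u$ gives $u(\ueX_0)T=Tu(\ueX_0)$, and since $T$ ranges over all of $\mmM_{d(\ueX_0)}$ we get $u(\ueX_0)\in\CCC I_{d(\ueX_0)}$. Thus $\ZZz(\ggG_0)$ is exactly the set of $u\in\ggG_0$ with $u(\ueX)\in\CCC I_{d(\ueX)}$ for all $\ueX\in\kkK$, and transporting through the isomorphism, $\ZZz(\ffF_0)$ consists precisely of those $w\in\ffF_0$ for which $w(\ueX)\in\CCC I_{d(\ueX)}$ for every $\ueX\in\kkK$.

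It remains to pass from $\kkK$ to the full class of irreducible tuples. Suppose $w\in\ffF_0$ takes scalar-matrix values at every point of $\kkK$, and let $\ueX\in\mmM^{(\ell)}$ be an arbitrary irreducible tuple; by the definition of a kernel there are a unique $\ueY\in\kkK$ and some $V\in\uuU_{d(\ueX)}$ with $\ueX = V . \ueY$, and then compatibility gives $w(\ueX) = V . w(\ueY)$, so $w(\ueX)$, being a unitary conjugate of a scalar matrix, is that same scalar matrix; in particular $w(\ueX)\in\CCC I_{d(\ueX)}$. The reverse containment is trivial because $\kkK$ consists of irreducible tuples, so combining this with the previous paragraph yields the corollary. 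I do not expect a genuine obstacle; the only points that deserve a moment's care are that the single-point function $v$ above indeed lies in whichever of the three algebras $\ffF_0$ stands for (it does, being bounded and degree-preserving) and that the correspondence of \COR{ext} is multiplicative, so that the two centres really match up.
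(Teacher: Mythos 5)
Your proof is correct and follows exactly the route the paper indicates: the paper states that \COR{center} is ``an immediate consequence of \COR{ext}'' and skips the details, and you supply those details by observing that the extension map $u\mapsto\hat u$ of \COR{ext} is a unital $*$-algebra isomorphism (by uniqueness of extensions), computing the centre on the kernel pointwise, and then propagating from $\kkK$ to all irreducible tuples via unitary covariance. Nothing is missing and no step fails.
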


In the sequel we shall also need the next two results.

\begin{lem}{ext-kernel}
Every Borel set $\kkK_0$ in $\mmM_m^{(\ell)}$ that consists of mutually unitarily inequivalent
irreducible $\ell$-tuples is contained in a Borel kernel $\kkK$ of $\mmM^{(\ell)}$.
\end{lem}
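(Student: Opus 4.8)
The plan is to build $\kkK$ by keeping $\kkK_0$ on the unitary orbits it already meets and pasting in a ready-made kernel everywhere else. Concretely, let $\Omega$ be the unitary saturation of $\kkK_0$,
\[
\Omega \df \{U . \ueX \dd\ \ueX \in \kkK_0,\ U \in \uuU_m\},
\]
and note $\Omega \subset \mmM_m^{\ell}$ because the unitary action preserves the degree. By \PRO{kernel} fix a Borel (indeed $\sigma$-compact) kernel $\kkK_1$ of $\mmM^{(\ell)}$, and put
\[
\kkK \df \kkK_0 \cup (\kkK_1 \setminus \Omega).
\]
Once we know that $\Omega$ is Borel, $\kkK$ is visibly a Borel set containing $\kkK_0$ and consisting of irreducible tuples, so it remains only to see that $\kkK$ is a selector for $\equiv$ on irreducible tuples, and this is a short case analysis: an irreducible $\ueX \in \Omega$ is $\equiv$ to a member of $\kkK_0 \subset \kkK$; an irreducible $\ueX \notin \Omega$ is $\equiv$ to its unique $\kkK_1$-representative, which then lies off $\Omega$ and hence in $\kkK$; and for uniqueness one just notes that no element of $\kkK_0$ can be $\equiv$ to an element of $\kkK_1 \setminus \Omega$ (otherwise the latter would belong to $\Omega$), whereas within $\kkK_0$ distinct tuples are inequivalent by hypothesis and within $\kkK_1$ by its being a kernel.

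So the whole matter reduces to the Borel-ness of $\Omega$, and this is the one step I expect to be a genuine obstacle. First I would replace $\uuU_m$ by the special unitary group $SU_m = \{U \in \uuU_m\dd\ \det U = 1\}$: since every element of $\uuU_m$ is a scalar multiple of one in $SU_m$ and scalars act trivially by conjugation, $\Omega$ is also the image of the Borel set $SU_m \times \kkK_0$ under the continuous map $(U,\ueX) \mapsto U . \ueX$. Next I would check that this map is finite-to-one: for $\ueY \in \Omega$ there is a unique $\ueX \in \kkK_0$ with $\ueX \equiv \ueY$ (distinct members of $\kkK_0$ being inequivalent), so the fibre over $\ueY$ is a coset of the stabilizer $\{U \in SU_m\dd\ U . \ueX = \ueX\}$, and for an irreducible tuple $\ueX$ this stabilizer equals $SU_m \cap \CCC I_m = \{\omega I_m \dd\ \omega^m = 1\}$ — here one uses that a unitary commuting with every $A_k$ also commutes with every $A_k^*$, together with the fact that the commutant of an irreducible family of matrices consists of scalars. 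Thus $\Omega$ is an (exactly $m$)-to-one Borel image of a Borel subset of a Polish space, hence Borel by the Luzin--Novikov theorem (a countable-to-one Borel image of a Borel set is Borel).

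The remaining items — the legitimacy of the scalar reduction, the inclusion $\Omega \subset \mmM_m^{\ell}$, and the case analysis showing $\kkK$ is a kernel — are elementary, and I would write them out only briefly. (An alternative route to the Borel-ness of $\Omega$ is to pass to the projective unitary group $\uuU_m/(\mathbb{T}I_m)$, for which the analogous map is \emph{injective}, and invoke the Luzin--Souslin theorem that an injective continuous image of a Borel set is Borel; yet another is, since $\iiI^{(\ell)}$ is $\sigma$-compact, to argue that the retraction $\iiI^{(\ell)} \to \kkK_1$ sending each irreducible tuple to its representative is Borel and to write $\Omega$ as the preimage under it of the injective Borel image of $\kkK_0$.)
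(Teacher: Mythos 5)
Your construction of $\kkK$ as $\kkK_0 \cup (\kkK_1 \setminus \Omega)$, where $\Omega$ is the unitary saturation of $\kkK_0$, is exactly the paper's: the paper writes it as $\kkK_0 \cup (\llL \cap \kkK_1)$ with $\llL = \mmM^{(\ell)} \setminus \bbB$ and $\bbB = \Omega$. The case analysis verifying the selector property is also the same.

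Where you genuinely diverge is the one step you correctly flag as the obstacle, the Borel-ness of $\Omega$. The paper first invokes Theorem~1.2.4 of Becker--Kechris to produce a Borel set $\ddD \subset \uuU_m$ meeting each coset of the center $\ZZz(\uuU_m) = \mathbb{T} I_m$ in exactly one point, so that $(U,\ueX) \mapsto U.\ueX$ is \emph{injective} on $\ddD \times \kkK_0$ (irreducibility makes the stabilizer central), and then concludes by Suslin's theorem that the image is Borel. You instead restrict the acting group to the closed (so automatically Borel) subgroup $SU_m$, accept that the stabilizer of an irreducible tuple is the $m$-element group $\{\omega I_m : \omega^m = 1\}$, so the map is exactly $m$-to-one, and appeal to Luzin--Novikov. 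Both are sound; yours trades the need for a Borel transversal of the center cosets (a descriptive-set-theoretic selector result) for the slightly heavier Luzin--Novikov theorem, which is a reasonable exchange. Your parenthetical alternative via the projective unitary group $\uuU_m/(\mathbb{T} I_m)$ is in fact essentially the paper's route, since a Borel section of $\uuU_m \to PU_m$ is the same thing as the Borel transversal $\ddD$. One very minor wording slip: the fibre over $\ueY$ is not literally a coset of the stabilizer but its projection to the first coordinate is; this does not affect the finiteness count.
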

\begin{proof}
Let $\zzZ$ be the center of $\uuU_m$. It follows e.g.\ from Theorem~1.2.4 in \cite{b-k} that there
is a Borel set $\ddD \subset \uuU_m$ which meets each coset of $\zzZ$ in exactly one point. This
implies that the function $\phi\dd \ddD \times \kkK_0 \ni (U,\ueX) \mapsto U . \ueX \in
\mmM_m^{(\ell)}$ is one-to-one (because members of $\kkK_0$ are irreducible). So, it follows from
a theorem of Suslin (see Corollary~A.7 in \cite{ta1} or Theorem~9 in \S1 of Chapter~XIII
in \cite{k-m}) that the image $\bbB$ of $\phi$ is a Borel set in $\mmM_m^{(\ell)}$. Hence, the set
$\llL \df \mmM^{(\ell)} \setminus \bbB$ is Borel in $\mmM^{(\ell)}$. Notice that $\llL \cap \kkK_0 =
\varempty$ and $\llL$ is unitarily invariant, that is, $U . \ueX \in \llL$ for any $\ueX \in \llL$
and $U \in \uuU_{d(\ueX)}$. Now if $\kkK_1$ is any Borel kernel for $\mmM^{(\ell)}$ (see
\PRO{kernel}), it suffices to put $\kkK \df \kkK_0 \cup (\llL \cap \kkK_1)$ to get the kernel
we searched for.
\end{proof}

The following result may be proved using methods and concepts of \cite{pn6}. Below we give
an alternative proof (especially that we shall use a part of it in the last section).

\begin{lem}{ext}
Every Borel function $u\dd \kkK \to \mmM$ defined on a Borel kernel $\kkK$ of $\mmM^{(\ell)}$
and satisfying \eqref{eqn:compat} admits a unique extension $\hat{u}$ to a function
in $\ffF_{\ell,1}$. Moreover, conditions \textup{(ext1)--(ext2)} hold.
\end{lem}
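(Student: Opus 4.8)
The plan is to reduce \LEM{ext} to the $\sigma$-compact case already established in \COR{ext}, by passing from an arbitrary Borel kernel $\kkK$ to a $\sigma$-compact one and transferring functions between them. Concretely, first I would invoke \PRO{kernel} to fix a $\sigma$-compact kernel $\kkK_1$ of $\mmM^{(\ell)}$. The main point is that any two kernels are related by a Borel isomorphism that is "fibrewise unitary": for each irreducible $\ueX$ there is a unitary $U$ (unique up to a scalar, since $\ueX$ is irreducible) carrying the representative of $\ueX$ in $\kkK$ to the one in $\kkK_1$. So I would build a Borel map $\Theta\dd \kkK \to \kkK_1$ sending $\ueY \in \kkK$ to the unique $\Theta(\ueY) \in \kkK_1$ with $\Theta(\ueY) \equiv \ueY$, together with a Borel choice $U_{\ueY} \in \uuU_{d(\ueY)}$ with $\Theta(\ueY) = U_{\ueY} . \ueY$.

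To get these Borel selections I would argue exactly as in the proofs of \PRO{kernel} and \LEM{ext-kernel}: the relevant graphs are Borel (or even analytic) subsets of products of the Polish spaces $\mmM^{(\ell)}$, $\mmM^{(\ell)}$ and $\bigsqcup_n \uuU_n$, and the fibres are singletons modulo the (compact) centre of each $\uuU_n$; hence Suslin's theorem (as cited in \LEM{ext-kernel}: Corollary~A.7 in \cite{ta1} or Theorem~9 in \S1 of Chapter~XIII of \cite{k-m}) shows the graph of $\Theta$ is Borel, so $\Theta$ is a Borel map, and Theorem~1.2.4 in \cite{b-k} (again as in \LEM{ext-kernel}) gives a Borel set of coset representatives in each $\uuU_n$ from which a Borel selection $\ueY \mapsto U_{\ueY}$ can be assembled. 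Note $d(\Theta(\ueY)) = d(\ueY)$ and $\Theta$ is a bijection onto $\kkK_1$ with Borel inverse, again by Suslin.

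With $\Theta$ and $(U_{\ueY})$ in hand, given a Borel $u\dd \kkK \to \mmM$ satisfying \eqref{eqn:compat}, I would transport it to $u_1\dd \kkK_1 \to \mmM$ by $u_1(\ueZ) \df U_{\Theta^{-1}(\ueZ)} . u(\Theta^{-1}(\ueZ))$; this is Borel and satisfies \eqref{eqn:compat}, so \COR{ext} yields a unique $\widehat{u_1} \in \ffF_{\ell,1}$ extending $u_1$, with (ext1)--(ext2) for $u_1$. I then set $\hat u \df \widehat{u_1}$ and check it extends $u$: for $\ueY \in \kkK$, $\hat u(\ueY) = \widehat{u_1}(U_{\ueY}^{-1}.\Theta(\ueY)) = U_{\ueY}^{-1}.\widehat{u_1}(\Theta(\ueY)) = U_{\ueY}^{-1}.u_1(\Theta(\ueY)) = U_{\ueY}^{-1}.(U_{\ueY}.u(\ueY)) = u(\ueY)$, using compatibility of $\widehat{u_1}$. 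Uniqueness of the extension follows because every $\ueX \in \mmM^{(\ell)}$ decomposes as in \eqref{eqn:pd} with pieces in $\kkK$ (since $\kkK$ is a kernel), so any $f \in \ffF_{\ell,1}$ is determined by $f\bigr|_{\kkK}$. Finally (ext1)--(ext2) for $u$ follow from the corresponding statements for $u_1$ together with the fact that the unitary action is isometric and continuous, so boundedness, local boundedness, the norm equality $\|\hat u\| = \|u\|$, and pointwise convergence are all unaffected by the transport $u \leftrightarrow u_1$ (one also uses that a pointwise-convergent sequence $u_n \to u$ on $\kkK$ transports to $u_{1,n} \to u_1$ on $\kkK_1$ and back).

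The main obstacle is the Borel measurability of the selections $\Theta$ and $\ueY \mapsto U_{\ueY}$, i.e.\ making precise that the fibrewise-unitary identification of two Borel kernels can be chosen Borel; everything after that is a routine transport argument. This is handled, as indicated, by the same descriptive-set-theoretic tools (Suslin's theorem on injective Borel images, and a Borel transversal for the centre of each $\uuU_n$) already used in \PRO{kernel} and \LEM{ext-kernel}, applied on each piece $\mmM_n^{(\ell)}$ and then assembled over $n$ — which is why the remark preceding the lemma notes that the methods of \cite{pn6} also suffice and that a portion of this argument is reused in the last section.
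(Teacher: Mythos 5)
Your proposal is correct and follows essentially the same route as the paper: fix a $\sigma$-compact kernel via \PRO{kernel}, construct a Borel "fibrewise unitary" identification $\kkK \to \kkK_1$ using a Borel transversal of the centre of each $\uuU_n$ (Theorem~1.2.4 of \cite{b-k}) together with Suslin's theorem, transport $u$ to a Borel function on $\kkK_1$ satisfying \eqref{eqn:compat}, and invoke \COR{ext}. The paper packages the selection step slightly differently --- it first defines the injection $\varphi\dd \bigcup_n(\ddD_n \times (\kkK \cap \mmM_n^{\ell})) \to \mmM^{(\ell)}$, intersects its image with $\kkK_1$, and reads off a Borel function $g$ from the resulting graph, then sets $\psi(\ueX) = g(\ueX).\ueX$ and transports $u$ via $u'(\ueX) = (g \circ \psi^{-1})(\ueX).(u \circ \psi^{-1})(\ueX)$ --- but this is exactly your $\Theta$, $U_{\ueY}$, and $u_1$, and the verification that the extension restricts back to $u$ is the same calculation.
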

\begin{proof}
All we need to show is that $u$ admits an extension (see the proof of \COR{ext}). Arguing
as in the proof of \LEM{ext-kernel}, we see that for any $n > 0$ there exists a Borel set $\ddD_n
\subset \uuU_n$ which meets each coset of the center of $\uuU_n$ at exactly one point. We put $\ddD
\df \bigcup_{n=1}^{\infty} \ddD_n$. Since $\kkK$ consists of irreducible $\ell$-tuples, we see that
the function $\varphi\dd \bigcup_{n=1}^{\infty} (\ddD_n \times (\kkK \cap \mmM_n^{\ell})) \ni
(U,\ueX) \mapsto U . \ueX \in \mmM^{(\ell)}$ is one-to-one (it is also Borel). Now let $\kkK'$ be
a $\sigma$-compact kernel of $\mmM^{(\ell)}$. Then the set $\llL \df \varphi^{-1}(\kkK')$ is Borel
and the projection $\llL \ni (U,\ueX) \mapsto \ueX \in \kkK$ onto the second coordinate is
a bijection. Applying Suslin theorem (see the previous proof), we conclude that the function $g\dd
\kkK \to \ddD$ that assigns to each $\ueX \in \kkK$ the unique unitary matrix $U \in \ddD$ such that
$(U,\ueX) \in \llL$ is Borel. Notice that then
\begin{equation*}
\psi\dd \kkK \ni \ueX \mapsto g(\ueX) . \ueX \in \kkK'
\end{equation*}
is a well defined Borel isomorphism. Now it remains to apply \COR{ext} to the function $u'\dd \kkK'
\ni \ueX \mapsto (g \circ \psi^{-1})(\ueX) . (u \circ \psi^{-1})(\ueX) \in \mmM$ to obtain
the extension $\hat{u} \in \ffF_{\ell,1}$ of $u$ and then to check that $\hat{u}$ extends $u$ (which
is left to the reader).
\end{proof}

\section{Functional calculus}

This section is mainly devoted to the proof of the following

\begin{thm}{main2}
Let $\MmM$ be a finite type~$\tI$ von Neumann algebra in a Hilbert space $\HHh$. For any finite
tuple $\ueT = (T_1,\ldots,T_{\ell})$ of operators in $\hat{\MmM}$ there exists a unique function
that assigns to every function $u$ in $\ffF_{\ell,\ell'}$ an $\ell'$-tuple $u[T_1,\ldots,T_{\ell}]$
of operators that belong to $\hat{\MmM}$ in a way such that the following conditions hold:
\begin{enumerate}[\upshape(F1)]
\item $\pi^{(\ell)}_j[\ueT] = T_j$ for any $j \leqsl \ell$;
\item for any finite collection $\{f_1,\ldots,f_{\ell'}\}$ of functions in $\ffF_{\ell,1}$,
 \begin{equation*}
 (f_k)_{k=1}^{\ell'}[\ueT] = (f_1[\ueT],\ldots,f_{\ell'}[\ueT]);
 \end{equation*}
\item if $f \in \ffF_{\ell,1}^{bd}$, then $f[\ueT]$ is bounded;
\item the function $\ffF_{\ell,1} \ni u \mapsto u[\ueT] \in \hat{\MmM}$ is a $*$-homomorphism;
\item whenever $u_n \in \ffF_{\ell,1}^{bd}$ are uniformly bounded and converge pointwise to $u \in
 \ffF_{\ell,1}^{bd}$, then $u_n[\ueT]$ converge to $u[\ueT]$ in the $*$-strong operator topology
 of $\BBb(\HHh)$.
\end{enumerate}
Moreover,
\begin{enumerate}[\upshape(F1)]\addtocounter{enumi}{5}
\item $v[u[\ueT]] = (v \circ u)[\ueT]$ for any $u \in \ffF_{\ell,\ell'}$ and $v \in
 \ffF_{\ell',\ell''}$;
\item if $T_j \in \MmM$ for each $j$, then $u[\ueT]$ is bounded for any $u \in \ffF_{\ell,1}^{loc}$;
\item for any $u \in \ffF_{\ell,1}$, $u[\ueT] \in \hat{\WwW}$ where $\WwW = \WwW''(\ueT)$;
\item if $\HHh$ is separable, then $\{u[\ueT]\dd\ u \in \ffF_{\ell,1}^{bd}\} = \WwW$, $\{u[\ueT]\dd\
 u \in \ZZz(\ffF_{\ell,1}^{bd})\} = \ZZz(\WwW)$, $\{u[\ueT]\dd\ u \in \ffF_{\ell,1}\} = \hat{\WwW}$
 and $\{u[\ueT]\dd\ u \in \ZZz(\ffF_{\ell,1})\} = \ZZz(\hat{\WwW})$ \textup{(}where $\WwW =
 \WwW''(\ueT)$\textup{)}.
\end{enumerate}
\end{thm}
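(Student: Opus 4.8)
The plan is to build the functional calculus by first reducing everything to the single-tuple scalar-valued case via (F2) and then to the "primary" case where $\MmM$ is homogeneous of a fixed degree $n$, using von Neumann's reduction theory. Concretely, I would first treat bounded operators: if $\ueT=(T_1,\dots,T_\ell)$ consists of operators in a homogeneous type $\tI_n$ algebra $\MmM$, then up to unitary equivalence $\HHh = \int^\oplus_Z \CCC^n\,\dint{\mu}$ and $\MmM = \int^\oplus_Z \mmM_n\,\dint{\mu}$, so each $T_j$ is a measurable field $z\mapsto T_j(z)\in\mmM_n$; one then simply defines, for $u\in\ffF_{\ell,1}^{bd}$, the field $z\mapsto u(T_1(z),\dots,T_\ell(z))$, checks it is measurable (Borel-ness of $u$ and measurability of the field $z\mapsto(T_j(z))_j$) and essentially bounded, and declares $u[\ueT]=\int^\oplus_Z u((T_j(z))_j)\,\dint{\mu}$. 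That this lands in $\MmM$ is immediate from the direct-integral description; compatibility of $u$ is exactly what guarantees the definition does not depend on the choice of diagonalising isomorphism (two such differ by a field of unitaries, and $f(U.X)=U.f(X)$). The general finite type $\tI$ case is the direct sum $\MmM=\bigoplus_n \MmM^{(n)}$ over the homogeneous summands, and one sets $u[\ueT]=\bigoplus_n u[\ueT^{(n)}]$.

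Next I would verify (F1)--(F5) for this definition in the bounded case. Properties (F1), (F4), (F3) are pointwise-in-$z$ trivialities. Property (F2) is the diagonal-function identity, again checked fibrewise. The key analytic point is (F5): if $u_k\in\ffF_{\ell,1}^{bd}$ are uniformly bounded and $u_k\to u$ pointwise, then for each fixed $z$ we have $u_k((T_j(z))_j)\to u((T_j(z))_j)$ in $\mmM_n$ (ordinary convergence of matrices), hence $u_k[\ueT]\xi\to u[\ueT]\xi$ for every $\xi\in\HHh$ by dominated convergence in the direct integral, and the same for adjoints; this gives $*$-strong convergence. Then I would bootstrap to unbounded operators in $\hat\MmM$: the $\bB$-transform $\bB$ (from the proof of \PRO{F}) carries any $\ueT\in\hat{\MmM}^{\ell}$ to a tuple of bounded operators $\bB(\ueT)$ still affiliated with (in fact lying in) $\MmM$, and for $u\in\ffF_{\ell,1}$ one writes $u = (u\circ\bB^{-1})\cdot$(something), or more cleanly defines $u[\ueT]$ fibrewise by $z\mapsto u((T_j(z))_j)$ where now $T_j(z)$ is an (unbounded-entry-free, i.e.\ genuine $n\times n$) matrix for a.e.\ $z$ — the point being that an operator affiliated with $\int^\oplus\mmM_n$ is exactly a measurable field of $n\times n$ matrices with no boundedness constraint. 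The Murray--von Neumann fact that $\hat\MmM$ is a $*$-algebra is what makes $u[\ueT]$ closed and densely defined; (F4) for $\ffF_{\ell,1}$ (not just the bounded part) and (F7) follow because local boundedness of $u$ together with boundedness of each fibre $T_j(z)$ forces $\|u((T_j(z))_j)\|\le \sup_{\|X\|\le R(z)}\|u(X)\|$, and when $T_j\in\MmM$ one has $R(z)$ uniformly bounded.

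Then I would establish uniqueness and (F6). Uniqueness is where \THM{F} enters: fix $\ueT$ and suppose two assignments $u\mapsto u[\ueT]$ both satisfy (F1)--(F5). Working first in $\ffF_{\ell,1}^{bd}$, the set $\eeE$ of $u$ on which the two agree is a unital $*$-subalgebra (by (F4)) containing each $\pi^{(\ell)}_j$ (by (F1)), closed under bounded pointwise limits (by (F5)), and closed under the operation $u\mapsto(u)^{-1}$ when that stays in $\ffF_0$ (since $(u)^{-1}[\ueT]$ is the unique inverse of $u[\ueT]$ in $\BBb(\HHh)$, forced by (F4) — one must check $u[\ueT]$ is invertible, which it is because each fibre is). Hence \THM{F} gives $\eeE=\ffF_{\ell,1}^{bd}$, and then the $\bB$-transform factorisation $u=v\cdot(w)^{-1}$ from the end of the proof of \PRO{F} (with $v,w$ bounded) propagates uniqueness to all of $\ffF_{\ell,1}$, and (F2) propagates it to all $\ffF_{\ell,\ell'}$. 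The composition law (F6) is then proved fibrewise in the bounded case — $v(u((T_j(z))_j)) = (v\circ u)((T_j(z))_j)$ is a tautology — and extended to the unbounded case by the same $\bB$-transform device, or alternatively deduced from uniqueness by noting that $w\mapsto w[u[\ueT]]$ and $w\mapsto (w\circ u)[\ueT]$ are two assignments for the tuple $u[\ueT]$ both satisfying (F1)--(F5).

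Finally, the double-commutant statements (F8)--(F9), for separable $\HHh$. For (F8), that $u[\ueT]\in\hat\WwW$ with $\WwW=\WwW''(\ueT)$: any $X\in\WwW'(\ueT)$ commutes with each $T_j$ appropriately, hence — decomposing $X$ through the reduction theory compatibly with $\ueT$ — acts fibrewise, and $u$ acting fibrewise on matrices automatically commutes with whatever commutes with all the $T_j(z)$, giving $Xu[\ueT]\subset u[\ueT]X$ and likewise for $X^*$. The harder direction, $\WwW\subset\{u[\ueT]:u\in\ffF_{\ell,1}^{bd}\}$, is the main obstacle: one takes $S\in\WwW=\WwW''(\ueT)$, invokes reduction theory to write $S=\int^\oplus S(z)\,\dint{\mu}$ with $S(z)$ in the von Neumann algebra generated by $T_1(z),\dots,T_\ell(z),T_1(z)^*,\dots$ in $\mmM_n$ — this uses that the commutant is decomposable and the standard fact $\WwW''(\ueT)(z) = (\{T_j(z),T_j(z)^*\})''$ in $\mmM_n$ for a.e.\ $z$ — and then needs a measurable selection producing, for a.e.\ $z$, a matrix $Y(z)$ depending Borel-measurably on $(T_j(z))_j$ with $Y(z)$ equal to $S(z)$ up to the ambiguity of unitary conjugation; restricting to a Borel kernel $\kkK$ (\PRO{kernel}, \LEM{ext-kernel}) trivialises that ambiguity on the irreducible part, and the reducible part is handled by a further measurable decomposition into irreducibles (again via a Suslin/selection argument as in \LEM{ext-kernel}). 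The resulting field defines a Borel function $u$ on $\kkK$ with $\hat u[\ueT]=S$, where $\hat u\in\ffF_{\ell,1}^{bd}$ is the extension from \LEM{ext}. The three remaining equalities in (F9) then follow: $\{u[\ueT]:u\in\ffF_{\ell,1}\}=\hat\WwW$ by applying the bounded case to $\bB(\ueT)$ and using that affiliated operators of $\WwW$ are the unbounded fibrewise fields; and the two center statements follow from \COR{center} together with the fibrewise description of $\ZZz(\WwW)$ (a decomposable operator is central in $\WwW$ iff a.e.\ fibre is a scalar matrix on the irreducible blocks), matching exactly the characterisation of $\ZZz(\ffF_{\ell,1}^{bd})$ and $\ZZz(\ffF_{\ell,1})$.
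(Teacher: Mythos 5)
Your plan is in the same spirit as the paper's (reduction theory to get a fibrewise picture, then \THM{F} for uniqueness). But you gloss over the technical heart of the argument, and one step as described does not yet work.

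\textbf{The gap in (F9).} To show $\WwW''(\ueT)\subset\{u[\ueT]\dd u\in\ffF_{\ell,1}^{bd}\}$ you write $S=\int^\oplus S(z)\dint\mu$ and then want a Borel function sending $(T_j(z))_j$ to $S(z)$, up to unitary conjugation handled by a kernel. For this to make sense you need two things, neither of which you establish: (i)~the fibre tuple $(T_j(z))_j$ is \emph{irreducible} for a.e.\ $z$; and (ii)~the fibre map $z\mapsto (T_j(z))_j$ is \emph{essentially one-to-one} modulo unitary equivalence, so that $S(z)$ is in fact a single-valued function of $(T_j(z))_j$. Your phrase ``$\WwW''(\ueT)(z)=(\{T_j(z),T_j(z)^*\})''$'' gives neither: it is compatible with those fibre algebras being proper subalgebras of $\mmM_n$ (so the tuple is not irreducible), and says nothing about different $z$'s producing the same tuple. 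These two properties are exactly what the paper proves as property~(\spade) in \LEM{sep}, and they are nontrivial: the proof first reduces to the case where $\WwW'(\ueT)$ is commutative (so the fibres become $\CCC^m$ for a \emph{new} $m$ attached to $\WwW(\ueT)$, not the $n$ attached to $\MmM$), then separates points of the base space by bounded measurable functions $v_j$, then approximates each $v_j$ by polynomials in the $\xi_j=T_j(\cdot)$ via Kaplansky's density theorem, and only \emph{then} concludes irreducibility plus one-to-one-ness. Without that step your ``measurable selection producing $Y(z)$ depending Borel-measurably on $(T_j(z))_j$'' does not get off the ground, because you have not shown that $Y$ can be chosen as a function of the tuple at all. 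The paper isolates this in a separate lemma (\LEM{sep}) precisely because it is where the real work happens. Note also that the commutative-commutant reduction forces you to pass from the fixed degree $n$ of $\MmM$ to a further decomposition of $\WwW(\ueT)$ into pieces of varying type~$\tI_m$; your setup (fibres $\CCC^n$ coming from reducing $\MmM$) does not give this directly.

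\textbf{Smaller points.} (a)~\THM{F} is stated only for $\ffF_{\ell,1}$ or $\ffF_{\ell,1}^{loc}$, not for $\ffF_{\ell,1}^{bd}$ (the coordinate projections $\pi^{(\ell)}_j$ are unbounded, so (E0) fails there); you should apply it to the larger algebra directly, as the paper does. (b)~Your fibrewise definition of $u[\ueT]$ for unbounded $T_j$ as a ``measurable field of matrices with no boundedness constraint'' skips over the question of closedness and dense definedness of the resulting operator. The paper instead decomposes $\HHh$ into reducing subspaces $\HHh_n$ on which $\ueT$ is bounded (via \cite{pn4}), defines $u[\ueT]$ as a direct sum, and uses \LEM{series} and \LEM{unbd} to verify that the resulting sums of bounded operators cut down by central projections are closed and densely defined; this machinery is what makes the unbounded case rigorous. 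Your appeal to ``the Murray--von Neumann fact that $\hat\MmM$ is a $*$-algebra'' does not by itself certify that an arbitrary fibrewise-defined field is closed and densely defined, since $u[\ueT]$ need not be an algebraic combination of the $T_j$.

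Apart from these points your strategy (fibrewise definition, verification of (F1)--(F5), $\bB$-transform to pass between bounded and unbounded, Borel kernel to trivialise unitary ambiguity) lines up with the paper's.
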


We precede the proof of \THM{main2} by a few auxiliary results.

\begin{lem}{aff}
Let $T_1,\ldots,T_{\ell}$ be operators in a Hilbert space $\HHh$. \TFCAE
\begin{enumerate}[\upshape(i)]
\item there is a finite type~$\tI$ von Neumann algebra $\MmM$ which each of $T_j$ is affiliated
 with;
\item $\WwW''(T_1,\ldots,T_{\ell})$ is finite and type~$\tI$.
\end{enumerate}
\end{lem}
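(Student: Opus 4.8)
The equivalence \textup{(i)}$\iff$\textup{(ii)} splits naturally into an easy and a hard implication. The implication \textup{(ii)}$\Rightarrow$\textup{(i)} is immediate: if $\WwW''(\ueT)$ is finite and type~$\tI$, then $\MmM \df \WwW''(\ueT)$ is itself a witness, since every $T_j$ is affiliated with $\WwW''(\ueT)$ (indeed $\WwW'(\ueT)$ consists precisely of the bounded operators commuting with all $T_j$ and $T_j^*$ in the appropriate sense, so any unitary in $(\WwW''(\ueT))' = \WwW'(\ueT)$ commutes with $T_j$, which is the definition of affiliation). So the whole content is in \textup{(i)}$\Rightarrow$\textup{(ii)}.

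\textbf{The hard direction.} Suppose $\MmM$ is a finite type~$\tI$ von Neumann algebra with each $T_j$ affiliated with $\MmM$. The key point is that affiliation of $T_j$ with $\MmM$ means, by definition, that every unitary $U$ in $\MmM'$ satisfies $U T_j U^{-1} = T_j$, equivalently $U T_j \subset T_j U$ and (applying the same to $U^*$ and taking adjoints) $U^* T_j \subset T_j U^*$; that is, $U \in \WwW'(\ueT)$. Hence $\MmM' \subset \WwW'(\ueT)$ (a von Neumann algebra is generated by its unitaries), and therefore, taking commutants, $\WwW''(\ueT) = (\WwW'(\ueT))' \subset \MmM'' = \MmM$. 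Now $\WwW''(\ueT)$ is a von Neumann algebra contained in the finite type~$\tI$ algebra $\MmM$. Finiteness passes to von Neumann subalgebras (a subalgebra of a finite algebra carries the restriction of a faithful normal tracial state, or: the subalgebra contains no proper isometry since $\MmM$ contains none). The delicate issue is type~$\tI$: \emph{type~$\tI$ is not inherited by arbitrary von Neumann subalgebras} in general. However, I would use the reduction theory for finite type~$\tI$ algebras, which the paper explicitly assumes the reader knows: $\MmM$ decomposes as a direct integral (or, over its center, a countable direct sum in the separable case) of algebras of the form $\mmM_n \bar\otimes L^\infty(\mu_n)$, i.e.\ $\MmM \cong \bigoplus_n M_n(\ZZz_n)$ with $\ZZz_n$ abelian. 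A finite von Neumann subalgebra of such an algebra is again finite type~$\tI$: on each homogeneous summand $M_n(\ZZz_n)$ the relative commutant structure forces any von Neumann subalgebra to remain type~$\tI$ (one can argue via the fact that $\MmM$ is type~$\tI$ \iaoi{} it has abelian projections summing to the identity that are "large enough", and a subalgebra inherits a supply of abelian projections from those of $\MmM$ after cutting by central projections of the subalgebra).

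\textbf{The main obstacle.} The genuine difficulty is precisely the type~$\tI$ inheritance step: one must show that $\WwW''(\ueT) \subset \MmM$ is type~$\tI$, not merely finite, and for this the key tool is that $\MmM$, being finite and type~$\tI$, has \emph{finite coupling/dimension function} and every von Neumann subalgebra of such an algebra has an abelian commutant within $\MmM$ in a suitable local sense. Concretely, I would invoke the structure theorem: a von Neumann algebra $\NnN$ on a Hilbert space is finite type~$\tI$ \iaoi{} it is $*$-isomorphic to a von Neumann subalgebra of $\mmM_n \bar\otimes L^\infty(\mu)$ for some $n$ and some standard measure space; since $\WwW''(\ueT) \subset \MmM$ and $\MmM$ embeds (via reduction theory) into such an algebra, so does $\WwW''(\ueT)$, and finiteness was already established. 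This reduces the whole lemma to the cited Murray--von Neumann/von Neumann structure and reduction theory, which is fair game by the standing assumptions of the paper. Thus the proof is short modulo these structural facts, and I would present it in that streamlined form: \textup{(ii)}$\Rightarrow$\textup{(i)} trivially; \textup{(i)}$\Rightarrow$\textup{(ii)} by the commutant inclusion $\WwW''(\ueT) \subset \MmM$ followed by the two inheritance facts (finiteness, always; type~$\tI$, via reduction theory).
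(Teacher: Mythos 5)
Your overall route is the same as the paper's: $(ii)\Rightarrow(i)$ is trivial, and for $(i)\Rightarrow(ii)$ you deduce $\WwW''(\ueT)\subset\MmM$ from the commutant inclusion $\MmM'\subset\WwW'(\ueT)$ and then appeal to the fact that a von Neumann subalgebra of a finite type~$\tI$ von Neumann algebra is again finite and type~$\tI$. The paper does precisely this and, for the inheritance fact, simply cites Proposition~III.1.5.14 of \cite{bla}.

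The problem is in your attempt to re-derive the inheritance fact. The structure theorem you invoke --- that a von Neumann algebra is finite type~$\tI$ \iaoi{} it is $*$-isomorphic to a von Neumann subalgebra of $\mmM_n\bar{\otimes}L^{\infty}(\mu)$ for some \emph{fixed} $n$ --- is false; it characterizes being finite type~$\tI$ with all homogeneous components of degree bounded by $n$. A general finite type~$\tI$ von Neumann algebra can have homogeneous summands of unbounded degree (for instance $\bigoplus_{n\geqsl1}\mmM_n$), and such an algebra has factor representations of arbitrarily large dimension, hence admits no embedding into any single $\mmM_n\bar{\otimes}L^{\infty}(\mu)$. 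So the step ``$\MmM$ embeds in $\mmM_n\bar{\otimes}L^{\infty}$, hence so does $\WwW''(\ueT)$'' already fails for $\MmM$ itself once $\MmM$ is not subhomogeneous. If you wish to prove rather than cite the inheritance fact, you should pass to the homogeneous decomposition $\MmM = \bigoplus_n z_n\MmM$ (with $z_n$ central and $z_n\MmM$ type~$\tI_n$), note that $z_n\in\ZZz(\MmM)\subset\MmM'\subset\WwW'(\ueT)$ so that the corners $z_n\WwW''(\ueT)$ are von Neumann subalgebras of the homogeneous pieces, and argue piece by piece --- which is essentially what the cited Blackadar result packages. The rest of your proposal (the trivial direction, the commutant inclusion, finiteness of subalgebras) is correct and matches the paper.
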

\begin{proof}
We only need to check that (ii) is implied by (i). Observe that, when $\MmM$ is as specified in (i),
$\WwW''(T_1,\ldots,T_{\ell}) \subset \MmM$. So, the conclusion of (ii) follows from the fact that
a von Neumann subalgebra of a finite type~$\tI$ von Neumann algebra is also finite and type~$\tI$,
which may simply be deduced, e.g., from Proposition~III.1.5.14 in \cite{bla}.
\end{proof}

For simplicity, we introduce

\begin{dfn}{fin1}
An $\ell$-tuple $\ueT = (T_1,\ldots,T_{\ell})$ of operators in a Hilbert space $\HHh$ is said to be
\textit{\FTI} if $\WwW''(\ueT)$ is finite and type~$\tI$. For every unitary operator $U\dd \HHh \to
\KKk$ (between Hilbert spaces) we use $U . \ueT$ to denote the $\ell$-tuple $(U T_1 U^{-1},\ldots,
U T_{\ell} U^{-1})$ of operators in $\KKk$. A closed linear subspace $\EEe$ of $\HHh$ is
\textit{reducing} for $\ueT$ if $\EEe$ reduces each of $T_j$. If this happens, $\ueT\bigr|_{\EEe}$
is defined as the tuple $(T_1\bigr|_{\EEe},\ldots,T_{\ell}\bigr|_{\EEe})$ of operators in $\EEe$.
Direct sums of $\ell$-tuples of operators are defined coordinatewise; that is, if, for $s \in S$,
$\ueT^{(s)} = (T_1^{(s)},\ldots,T_{\ell}^{(s)})$, then $\bigoplus_{s \in S} \ueT^{(s)}$ denotes
the $\ell$-tuple $(\bigoplus_{s \in S} T_1^{(s)},\ldots,\bigoplus_{s \in S} T_{\ell}^{(s)})$.
Finally, if each of $T_j$ is bounded, we use $\|\ueT\|$ to denote the maximum of $\|T_j\|$;
otherwise we put $\|\ueT\| \df \infty$.
\end{dfn}

According to the terminology of \cite{pn1}, the next result asserts that \FTI{} $\ell$-tuples form
an \textit{ideal}. Its proof is given in \cite{pn3}.

\begin{lem}{ideal}
Let $\ueT$ be an \FTI{} $\ell$-tuple of operators in a Hilbert space $\HHh$.
\begin{enumerate}[\upshape(A)]
\item If $U\dd \HHh \to \KKk$ is unitary, then $U . \ueT$ is \FTI.
\item If $\EEe \subset \HHh$ is a \textup{(}nonzero\textup{)} reducing subspace for $\ueT$, then
 $\ueT\bigr|_{\EEe}$ is \FTI.
\item The direct sum of any collection of \FTI{} $\ell$-tuples is \FTI{} as well.
\end{enumerate}
\end{lem}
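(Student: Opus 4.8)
The plan is to reduce each of the three assertions to two ingredients: first, the standard permanence properties of finite type~$\tI$ von Neumann algebras --- the class of such algebras is closed under spatial $*$-isomorphism, under passing to von Neumann subalgebras (as already exploited in the proof of \LEM{aff} via Proposition~III.1.5.14 in \cite{bla}), under $\ell^{\infty}$-products, and under reduction/induction by a projection lying in the commutant (which, as we shall see, comes down to the direct-summand case and hence again to the subalgebra case); and second, elementary identifications of $\WwW'(\cdot)$ and $\WwW''(\cdot)$ of the modified tuple in terms of those of $\ueT$. Throughout I would keep careful track of the domains of the (possibly unbounded) $T_j$, since the relations $X T_j \subset T_j X$ and $X^* T_j \subset T_j X^*$ have to be verified literally and not merely formally.

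For (A): conjugation by a unitary $U\dd \HHh \to \KKk$ carries the defining relations of $\WwW'(\ueT)$ to those of $\WwW'(U.\ueT)$, whence $\WwW'(U.\ueT) = U\WwW'(\ueT)U^{-1}$ and therefore $\WwW''(U.\ueT) = U\WwW''(\ueT)U^{-1}$; being spatially isomorphic to $\WwW''(\ueT)$, it is finite and type~$\tI$. For (C): let $\ueT = \bigoplus_{s\in S}\ueT^{(s)}$ act on $\HHh = \bigoplus_{s\in S}\HHh_s$, write $P_s$ for the orthogonal projection onto $\HHh_s$, and set $\MmM_s \df \WwW''(\ueT^{(s)})$, finite type~$\tI$ by hypothesis. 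Since each $\HHh_s$ reduces every $T_j$, one has $P_s \in \WwW'(\ueT)$, so $P_s$ commutes with $\WwW''(\ueT)$; in particular every $X \in \WwW''(\ueT)$ leaves each $\HHh_s$ invariant and $X = \bigoplus_{s} X|_{\HHh_s}$ with $\sup_s\|X|_{\HHh_s}\| = \|X\|$. A short domain check shows that for $Y \in \WwW'(\ueT^{(s)})$ the operator equal to $Y$ on $\HHh_s$ and to $0$ elsewhere lies in $\WwW'(\ueT)$; consequently $X|_{\HHh_s}$ commutes with every such $Y$, i.e.\ $X|_{\HHh_s} \in \MmM_s$. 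Thus $X \mapsto (X|_{\HHh_s})_{s\in S}$ embeds $\WwW''(\ueT)$ as a von Neumann subalgebra of the $\ell^{\infty}$-product $\prod_{s\in S}\MmM_s$, which is finite and type~$\tI$; hence so is $\WwW''(\ueT)$.

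For (B), which I expect to be the main obstacle, let $P$ be the projection onto the nonzero reducing subspace $\EEe$, so $P \in \WwW'(\ueT)$, i.e.\ $P$ is a projection in $(\WwW''(\ueT))'$. By the induction formula for commutants, the reduced algebra $\MmM_P \df \{X|_{\EEe}\dd\ X \in \WwW''(\ueT)\}$ is a von Neumann algebra on $\EEe$ whose commutant in $\BBb(\EEe)$ is $P\WwW'(\ueT)P|_{\EEe}$. A direct computation (using $P T_j = T_j P$ on $\mathrm{dom}(T_j)$, which is where the domain care enters) gives $P\WwW'(\ueT)P|_{\EEe} \subseteq \WwW'(\ueT|_{\EEe})$; taking commutants in $\BBb(\EEe)$ reverses this to $\WwW''(\ueT|_{\EEe}) = \WwW'(\ueT|_{\EEe})' \subseteq (P\WwW'(\ueT)P|_{\EEe})' = \MmM_P$. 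It then remains to see that $\MmM_P$ is finite type~$\tI$: the normal surjective $*$-homomorphism $\WwW''(\ueT) \ni X \mapsto X|_{\EEe}$ has a weak-$*$ closed kernel, which is $\WwW''(\ueT)(1-z)$ for a central projection $z$, so $\MmM_P$ is $*$-isomorphic to the direct summand $\WwW''(\ueT)z$; the latter is a von Neumann subalgebra of $\WwW''(\ueT)$, hence finite type~$\tI$, and therefore so is its subalgebra $\WwW''(\ueT|_{\EEe})$.

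The delicate points are exactly the domain bookkeeping that makes the identifications of $\WwW'$ genuine inclusions (so that the zero-extended $Y$ really lies in $\WwW'(\ueT)$ in (C), and $P\WwW'(\ueT)P|_{\EEe} \subseteq \WwW'(\ueT|_{\EEe})$ in (B)), together with making precise the two reduction-theoretic facts about finite type~$\tI$ algebras used above --- stability under $\ell^{\infty}$-products and under induction by a projection in the commutant. None of this is deep, but each step needs a little care, which is presumably why the authors prefer to quote \cite{pn3}.
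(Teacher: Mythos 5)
The paper does not actually contain a proof of this lemma: immediately before the statement it says ``Its proof is given in \cite{pn3}'', so there is no in-text argument to compare against. Your proposal therefore has to be assessed on its own merits.

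As far as I can tell the argument is correct and complete modulo the domain checks you flag, and those do go through. Part (A) is immediate by spatial conjugation. For (C), the zero-extension of $Y \in \WwW'(\ueT^{(s)})$ does land in $\WwW'(\ueT)$ (the key point is that $Y T^{(s)}_j \subset T^{(s)}_j Y$ forces $Y h_s \in \mathrm{dom}(T^{(s)}_j)$ whenever $h_s \in \mathrm{dom}(T^{(s)}_j)$, so the extended operator maps $\mathrm{dom}(T_j)$ into itself), and $\ell^\infty$-products of finite type~$\tI$ algebras are finite (componentwise isometries are componentwise unitaries) and type~$\tI$ (any nonzero central projection has a nonzero component, under which one finds a nonzero abelian projection), so the embedding into $\prod_s \MmM_s$ together with the already-used subalgebra permanence does the job. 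For (B), the induction formula $(\MmM_P)' = (P\WwW'(\ueT)P)\bigr|_{\EEe}$, the inclusion $(P\WwW'(\ueT)P)\bigr|_{\EEe} \subseteq \WwW'(\ueT\bigr|_{\EEe})$ (which uses $PT_j \subset T_jP$), and the identification of $\MmM_P$ with the direct summand $\WwW''(\ueT)z$ (here $1-z$ is the central projection generating the $\sigma$-weakly closed kernel of restriction) combine correctly; one small wording slip is that $\WwW''(\ueT)z$ is a direct summand of $\WwW''(\ueT)$, not literally a unital von Neumann subalgebra on $\HHh$, but the permanence of ``finite type~$\tI$'' under passing to direct summands is even more elementary than the subalgebra statement, so the conclusion stands. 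The cited \cite{pn3} (``Direct integrals of matrices'') may well argue via a decomposition into direct integrals of matrix algebras; your soft structural route, using only the induction formula and the cited permanence facts, is if anything more elementary and self-contained.
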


\begin{lem}{sep}
Let $\ueT$ be an $\ell$-tuple of bounded operators acting on a separable Hilbert space such that
$\WwW(\ueT)$ is type~$\tI_m$ \textup{(}with finite $m$\textup{)}, $\WwW'(\ueT)$ is commutative and
$\|\ueT\| = R$. Then there exist a unital $*$-homomorphism $\ffF_{\ell,1}^{loc} \ni u \mapsto
u[\ueT] \in \WwW(\ueT)$ and a probabilistic Borel measure $\lambda$ on $\mmM^{(\ell)}$ such that
\textup{(F1)} holds and
\begin{enumerate}[\upshape($\Lambda$1)]\addtocounter{enumi}{-1}
\item $\lambda$ is concentrated on the set $\ssS$ of all irreducible $\ell$-tuples in $\mmM_m^{\ell}
 \cap \bbB^{(\ell)}(R)$ and for any $u, v \in \ffF_{\ell,1}^{loc}$, $u[\ueT] = v[\ueT]$ iff $u$ and
 $v$ are equal $\lambda$-almost everywhere; and
\item whenever $u_n \in \ffF_{\ell,1}^{loc}$ are uniformly bounded on $\ssS$ and converge pointwise
 $\lambda$-almost everywhere to $u \in \ffF_{\ell,1}^{loc}$, then $u_n[\ueT]$ converge to $u[\ueT]$
 in the $*$-strong operator topology; and
\item if functions $u_n \in \ffF_{\ell,1}^{loc}$ are uniformly bounded on $\ssS$ and $u_n[\ueT]$
 converge pointwise to $0$, then there is a subsequence $(u_{\nu_n})_{n=1}^{\infty}$
 of $(u_n)_{n=1}^{\infty}$ such that the functions $u_{\nu_n}$ converge pointwise $\lambda$-almost
 everywhere to the zero function in $\ffF_{\ell,1}^{loc}$; and
\item for any $u \in \ffF_{\ell,1}^{loc}$, $\|u[\ueT]\| \leqsl \sup \{\|u(\ueX)\|\dd\ \ueX \in
 \ssS\}$; conversely, for any $S \in \WwW(\ueT)$ \textup{(}resp.\ $S \in \ZZz(\WwW(\ueT))$\textup{)}
 there exists $u \in \ffF_{\ell,1}^{bd}$ \textup{(}resp.\ $u \in \ZZz(\ffF_{\ell,1}^{bd})$\textup{)}
 with $u[\ueT] = S$ and $\|u\| \leqsl \|S\|$.
\end{enumerate}
\end{lem}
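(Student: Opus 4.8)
The plan is to reduce everything to a concrete model provided by reduction theory. Since $\WwW(\ueT)$ is type~$\tI_m$ with $\WwW'(\ueT)$ commutative and $\HHh$ separable, I would first invoke von Neumann's reduction theory (in the form tailored to homogeneous finite type~$\tI$ algebras) to realize $\HHh$ as a direct integral $\int_Z^{\oplus} \CCC^m \dint{\nu(z)}$ over a standard probability space $(Z,\nu)$, with $\WwW(\ueT) = \int_Z^{\oplus} \mmM_m \dint{\nu(z)}$ acting as matrix multiplication and $\WwW'(\ueT) = L^{\infty}(Z,\nu) \cdot I_m$ the diagonal algebra. Under this identification each $T_j$ becomes a decomposable operator $z \mapsto T_j(z) \in \mmM_m$, so $\ueT$ is given by a Borel map $Z \ni z \mapsto (T_1(z),\ldots,T_{\ell}(z)) \in \mmM_m^{\ell}$, which (after discarding a null set and using $\|\ueT\| = R$) lands in $\bbB^{(\ell)}(R)$. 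Because $\WwW'(\ueT)$ is exactly the diagonal algebra (not larger), the commutant of $\{T_j(z)\}$ must be trivial for $\nu$-a.e.\ $z$, i.e.\ $(T_1(z),\ldots,T_{\ell}(z))$ is irreducible a.e.; so the measurable field factors through the set $\ssS$ of irreducible $\ell$-tuples in $\mmM_m^{\ell} \cap \bbB^{(\ell)}(R)$. Pushing $\nu$ forward along this field defines the probability measure $\lambda$ on $\mmM^{(\ell)}$ concentrated on $\ssS$.

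Next I would define the functional calculus. For $u \in \ffF_{\ell,1}^{loc}$, restrict $u$ to $\ssS$ (it is Borel and bounded on $\bbB^{(\ell)}(R) \supset \ssS$, hence bounded on $\ssS$), pull it back to a bounded Borel matrix field $z \mapsto u(T_1(z),\ldots,T_{\ell}(z)) \in \mmM_m$, and set $u[\ueT] \df \int_Z^{\oplus} u(\ueT(z)) \dint{\nu(z)} \in \WwW(\ueT)$. Compatibility of $u$ together with $\ell'=1$ is what makes this well defined on the model and independent of the choices in reduction theory; functoriality of the direct-integral construction gives that $u \mapsto u[\ueT]$ is a unital $*$-homomorphism, and (F1) is immediate since $\pi^{(\ell)}_j[\ueT] = \int_Z^{\oplus} T_j(z) \dint{\nu} = T_j$. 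For ($\Lambda$0): $u[\ueT] = v[\ueT]$ iff $u(\ueT(z)) = v(\ueT(z))$ for $\nu$-a.e.\ $z$ iff $u = v$ holds $\lambda$-a.e.\ on $\ssS$ (the pushforward identity), and the choice of $\lambda$ makes this rigorous. For ($\Lambda$3): the norm bound $\|u[\ueT]\| = \operatorname*{ess\,sup}_z \|u(\ueT(z))\| \leqsl \sup_{\ueX \in \ssS} \|u(\ueX)\|$ is the direct-integral norm formula; conversely, given $S \in \WwW(\ueT) = \int_Z^{\oplus} \mmM_m \dint{\nu}$ write $S = \int_Z^{\oplus} S(z) \dint{\nu}$ and use a Borel selection (via Suslin's theorem, as in the proofs of \LEM{ext-kernel} and \LEM{ext}) of a matrix-valued function $u$ on $\ssS$ with $u(\ueT(z)) = S(z)$ a.e.\ — this is possible because each $\ueT(z)$ is irreducible, so every matrix in $\mmM_m$ is a ($\QQq$-)polynomial in $\ueT(z), \ueT(z)^*$ — then extend $u$ to all of $\mmM^{(\ell)}$ by \LEM{ext}; the central statement follows since $\ZZz(\WwW(\ueT)) = \WwW'(\ueT)$ corresponds to scalar fields $z \mapsto \varphi(z) I_m$, matching $\ZZz(\ffF_{\ell,1}^{bd})$ via \COR{center}.

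Finally, the convergence statements ($\Lambda$1)--($\Lambda$2) are dominated-convergence arguments in the direct integral. For ($\Lambda$1): if $u_n$ are uniformly bounded on $\ssS$ and $u_n \to u$ $\lambda$-a.e., then $u_n(\ueT(z)) \to u(\ueT(z))$ in $\mmM_m$ for $\nu$-a.e.\ $z$ with a uniform bound, so $\|(u_n[\ueT] - u[\ueT])\xi\|^2 = \int_Z \|(u_n(\ueT(z)) - u(\ueT(z)))\xi(z)\|^2 \dint{\nu}$ and $\|(u_n[\ueT]^* - u[\ueT]^*)\xi\|^2$ both tend to $0$ by dominated convergence (domination by $\text{const}\cdot\|\xi(z)\|^2 \in L^1$), which is exactly $*$-strong convergence. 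For ($\Lambda$2): if $u_n[\ueT] \to 0$ $*$-strongly (equivalently, weakly on a dense set), then in particular $\int_Z \langle u_n(\ueT(z))\xi(z), \eta(z)\rangle \dint{\nu} \to 0$ for all $\xi,\eta$; a standard extraction argument converts this (using a countable separating family of $\xi,\eta$ and the uniform bound) into a subsequence $u_{\nu_n}(\ueT(z)) \to 0$ in norm for $\nu$-a.e.\ $z$, i.e.\ $u_{\nu_n} \to 0$ $\lambda$-a.e. I expect the main obstacle to be the careful setup of the reduction-theory model together with the verification that $\WwW'(\ueT)$ being exactly the diagonal forces irreducibility $\nu$-a.e.\ and well-definedness of $\lambda$ (independence of the choices in the direct-integral decomposition); once the model is in place, the five conclusions are essentially bookkeeping with direct integrals plus the Borel-selection device already used earlier in the paper.
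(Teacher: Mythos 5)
Your overall route is the same as the paper's: realize $\HHh$ as $L^2(\Omega,\mu,\CCC^m)$ with $\WwW(\ueT)$ the algebra of decomposable operators, write $\ueT$ as a bounded Borel field $\omega \mapsto \ueT(\omega) \in \mmM_m^{\ell}$, define $u[\ueT]$ as the decomposable operator with field $\omega \mapsto u(\ueT(\omega))$, set $\lambda$ to be the pushforward of $\mu$, and get ($\Lambda$1)--($\Lambda$2) by dominated convergence plus the standard extraction of an a.e.\ convergent subsequence from a $*$-strongly convergent one.

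However, your sketch of the converse half of ($\Lambda$3) has a genuine gap. You justify the Borel selection of $u$ with ``this is possible because each $\ueT(z)$ is irreducible,'' but irreducibility of the individual fibres is not enough to make a function $u$ with $u(\ueT(z)) = S(z)$ a.e.\ well defined and \emph{compatible}. Two additional facts about the field are indispensable: after discarding a $\mu$-null set, (i) the map $\omega \mapsto \ueT(\omega)$ is one-to-one, and (ii) its values are pairwise unitarily inequivalent. Without (ii), one could have $\ueT(\omega_2) = V . \ueT(\omega_1)$ while $S(\omega_2) \neq V . S(\omega_1)$, and then no compatible function can interpolate $S$; and without (i) the prescription $u(\ueT(z)) = S(z)$ need not even be single-valued. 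These two facts are also precisely what puts the range of the field inside some Borel kernel (via \LEM{ext-kernel}), which is what lets you invoke \LEM{ext} to extend $u$ to $\ffF_{\ell,1}^{bd}$ --- you implicitly use this, but \LEM{ext} only extends from a kernel, not from the set $\ssS$ of all irreducibles. In the paper this is the content of property (\spade), and its proof is the nontrivial heart of the lemma: one chooses a countable separating family $v_1,v_2,\ldots\dd \Omega \to \mmM_m$ as in (\club), writes each $v_j(\omega)$ as an a.e.\ pointwise limit of $*$-polynomials in $\ueT(\omega)$ by Kaplansky's density theorem combined with the subsequence-extraction property (S), and then reads off essential injectivity, pairwise inequivalence, and irreducibility from this simultaneously. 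Your closing remark flags ``irreducibility $\nu$-a.e.'' and ``well-definedness of $\lambda$'' as the obstacles, but $\lambda$ is automatically well defined as a pushforward; the actual obstacle is (i)--(ii), which you should name and prove.
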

\begin{proof}
Since $\WwW(\ueT)$ is a type~$\tI_m$ von Neumann algebra acting on a separable Hilbert space (say
$\HHh$) and $\WwW'(\ueT)$ is commutative, it follows from reduction theory due to von Neumann
\cite{jvn} (see also Chapter~14 in \cite{kr2}, \S3.2 in \cite{sak}, \S8 of Chapter~IV in \cite{ta1})
that there exist a standard Borel space $(\Omega,\Mm)$, a probabilistic measure $\mu\dd \Mm \to
[0,1]$ and a unitary operator $U\dd \HHh \to L^2(\Omega,\mu,\CCC^m)$ such that the algebra $\{U S
U^{-1}\dd\ S \in \WwW(\ueT)\}$ coincides with the set of all bounded decomposable operators
on $L^2(\Omega,\mu,\CCC^m)$. Recall that $L^2(\Omega,\mu,\CCC^m)$ is the Hilbert space of all
measurable functions $f\dd \Omega \to \CCC^m$ for which $(\|f\|^2 =) \int_{\Omega} \|f(\omega)\|^2
\dint{\mu(\omega)} < \infty$, and a bounded operator $S$ on $L^2(\Omega,\mu,\CCC^m)$ is decomposable
if there is a bounded measurable function $u\dd \Omega \to \mmM_m$ such that $S = M_u$ where
$(M_u f)(\omega) \df u(\omega) f(\omega)$ for each $\omega \in \Omega$. The proof of Theorem~14.1.10
in \cite{kr2} shows that
\begin{itemize}
\item[(S)] if $w_n\dd \Omega \to \mmM_m$ are uniformly bounded measurable functions such that
 $M_{w_n}$ converge to $M_w$ in the strong operator topology, then there exists a subsequence
 $(w_{\nu_n})_{n=1}^{\infty}$ of $(w_n)_{n=1}^{\infty}$ such that the functions $w_{\nu_n}$ converge
 pointwise $\mu$-almost everywhere to $w$.
\end{itemize}
Now write $\ueT = (T_1,\ldots,T_{\ell})$ and denote by $\xi_j\dd \Omega \to \mmM_n$ a bounded
measurable function such that $U T_j U^{-1} = M_{\xi_j}$. We may and do assume that
\begin{equation}\label{eqn:normuj}
\|\xi_j\| \leqsl R
\end{equation}
for each $j$. We claim that there exists a set $Z \in \Mm$ such that $\mu(Z) = 0$ and
\begin{itemize}
\item[(\spade)] the function
 \begin{equation}\label{eqn:psi}
 \psi\dd \Omega \setminus Z \ni \omega \mapsto (\xi_1(\omega),\ldots,\xi_{\ell}(\omega)) \in
 \mmM_n^{\ell}
 \end{equation}
 is one-to-one and its range $\kkK_0$ is contained in $\ssS$ and consists of mutually unitarily
inequivalent irreducible $\ell$-tuples.
\end{itemize}
This property is well-known, but, for the reader's convenience, we give its proof, which is similar
to that of item~2 of Theorem~3.4 in \cite{ern}. Since $\Omega$ is standard, there is a sequence
$v_1,v_2,\ldots\dd \Omega \to \mmM_m$ of bounded measurable functions such that
\begin{itemize}
\item[(\club)] for any two distinct points $a$ and $b$ of $\Omega$, the set $\{(v_j(a),v_j(b))\dd\
 j > 0\}$ is dense in $\mmM_m \times \mmM_m$.
\end{itemize}
Further, we conclude from the property that $\WwW(M_{\xi_1},\ldots,M_{\xi_{\ell}})$ contains each
of $M_{v_n}$, Kaplansky's density theorem and property (S) that for any $j$ there are a sequence
$p^{(j)}_1,p^{(j)}_2,\ldots$ of polynomials in $2\ell$ noncommuting variables and a set $Z_j \in
\Mm$ such that $\mu(Z_j) = 0$ and
\begin{equation}\label{eqn:convg}
\lim_{n\to\infty}
p^{(j)}_n(\xi_1(\omega),\ldots,\xi_{\ell}(\omega),\xi_1^*(\omega),\ldots,\xi_{\ell}^*(\omega)) =
v_j(\omega)
\end{equation}
for any $\omega \in \Omega \setminus Z_j$. We put $Z \df \bigcup_{j=1}^{\infty} Z_j$. Notice that
$\mu(Z) = 0$ and \eqref{eqn:convg} holds for any $\omega \in \Omega \setminus Z$, which implies that
$\psi$, given by \eqref{eqn:psi}, is one-to-one (because the functions $v_n$ separate points
of $\Omega$, by (\club)). Moreover, if $\omega, \omega' \in \Omega \setminus Z$ and $W \in \uuU_m$
are such that $W . \psi(\omega) = \psi(\omega')$, then $W . v_j(\omega) = v_j(\omega')$ for any $j$
(again by \eqref{eqn:convg}) and hence $\omega = \omega'$ (thanks to (\club)). Further, it follows
from \eqref{eqn:normuj} that the range $\kkK_0$ of $\psi$ is contained in $\mmM_m^{\ell} \cap
\bbB^{(\ell)}(R)$ and therefore it remains to check that each value of $\psi$ is an irreducible
$\ell$-tuple. But this again follows from \eqref{eqn:convg} and (\club), because the former formula
implies that $\WwW(\psi(\omega))$ coincides with $\mmM_m$ for any $\omega \notin Z$. So, the proof
of (\spade) is complete.\par
Replacing $\Omega$ by $\Omega \setminus Z$, we may and do assume that $Z = \varempty$. Further,
since $\Omega$ and $\mmM_m^{\ell}$ are standard measure spaces and $\psi$ is a one-to-one measurable
function, we conclude that $\kkK_0$ is a Borel subset of $\mmM_m^{\ell}$ and $\psi$ is a Borel
isomorphism of $\Omega$ onto $\kkK_0$ (consult, for example, Corollary~A.7 in \cite{ta1}
or Theorem~9 in \S1 of Chapter~XIII in \cite{k-m}). So, it follows from \LEM{ext-kernel} that there
is a Borel kernel $\kkK$ of $\mmM^{(\ell)}$ which contains $\kkK_0$. We now define a probabilistic
measure $\lambda\dd \Bb(\mmM^{(\ell)}) \to [0,1]$ as the transport of $\mu$ under $\psi$; that is,
$\lambda(\bbB) \df \mu(\psi^{-1}(\bbB))$ for any Borel set $\bbB$ in $\mmM^{(\ell)}$. We see that
$\lambda(\kkK_0) = 1$ and therefore the first claim of ($\Lambda$0) holds (see (\spade)). Finally,
for each $u \in \ffF_{\ell,1}^{loc}$, we define $u[\ueT]$ by $u[\ueT] \df U^{-1} M_{u \circ \psi} U\
(\in \WwW(\ueT))$. It is readily seen that the assignment $u \mapsto u[\ueT]$ correctly defines
a unital $*$-homomorphism for which (F1), the second claim of ($\Lambda$1) and the first
of ($\Lambda$3) hold (recall that $\pi^{(\ell)}_j \circ \psi = \xi_j$). Furthermore, for any $S \in
\WwW(\ueT)$ (resp.\ $S \in \ZZz(\WwW(\ueT))$) there is a bounded Borel function $v\dd \Omega \to
\mmM_m$ (resp.\ $v\dd \Omega \to \CCC \cdot I_m$) such that $U S U^{-1} = M_v$ and $\|v\| \leqsl
\|S\|$. Then let $u_0\dd \kkK \to \mmM$ coincide with $v \circ \psi^{-1}$ on $\kkK_0$ and vanish
at each point of $\kkK \setminus \kkK_0$. We then infer from \LEM{ext} (and \COR{center}) that there
is $u \in \ffF_{\ell,1}^{bd}$ (resp.\ $u \in \ZZz(\ffF_{\ell,1}^{bd})$) which extends $u_0$ and
satisfies $\|u\| \leqsl \|S\|$. This implies that $u \circ \psi = v$ and hence $u[\ueT] = M_v$. So,
the whole assertion of ($\Lambda$3) holds and thus it remains to verify conditions ($\Lambda$1) and
($\Lambda$2).\par
If $u_n$ and $u$ are as specified in ($\Lambda$1), then the functions $u_n \circ \psi$ are uniformly
bounded and converge pointwise $\mu$-almost everywhere to $u \circ \psi$ (by the definition
of $\lambda$). Then for each $g \in L^2(\Omega,\mu,\CCC^m)$ we have $\|u_n(\psi(\omega))
g(\omega)\|^2 \leqsl C \|g(\omega)\|^2$ for each $n$ and almost all $\omega \in \Omega$ (where $C$
is a positive contant independent of $n$) and therefore, by Lebesgue's dominated convergence
theorem, $\lim_{n\to\infty} \int_{\Omega} \|u_n(\psi(\omega)) g(\omega) - u(\psi(\omega))
g(\omega)\|^2 \dint{\mu(\omega)} = 0$. This shows that the operators $M_{u_n \circ \psi}$ converge
to $M_{u \circ \psi}$ in the strong operator topology. Consequently, $u_n[\ueT]$ converge so
to $u[\ueT]$. But also the functions $u_n^*$ are uniformly bounded on $\ssS$ and converge pointwise
$\lambda$-almost everywhere to $u^*$. We thus conclude that $u_n^*[\ueT]$ converge pointwise
to $u[\ueT]$, which finishes the proof of ($\Lambda$1).\par
Finally, assume $u_n$ are as specified in ($\Lambda$2). Then the functions $u_n \circ \psi$ are
uniformly bounded and $M_{u_n \circ \psi}$ converge pointwise to $0$. Hence, we infer from (S) that
there are a subsequence $(u_{\nu_n})_{n=1}^{\infty}$ of $(u_n)_{n=1}^{\infty}$ and a set $B \in \Mm$
such that $\mu(B) = 1$ and $\lim_{n\to\infty} u_{\nu_n}(\psi(\omega)) = 0$ for all $\omega \in B$.
Then the set $\bbB \df \psi(B)$ is Borel in $\mmM^{(\ell)}$, $\lambda(\bbB) = 1$ and
$\lim_{n\to\infty} u_n(\ueX) = 0$ for any $\ueX \in \bbB$.
\end{proof}

Now we shall easily prove a generalization of \LEM{sep}.

\begin{pro}{sep}
Let $\ueT$ be an \FTI{} $\ell$-tuple of bounded operators acting on a separable Hilbert space with
$\|\ueT\| = R$. Then there exist a unital $*$-homomorphism $\ffF_{\ell,1}^{loc} \ni u \mapsto
u[\ueT] \in \WwW(\ueT)$ and a probabilistic Borel measure $\lambda$ on $\mmM^{(\ell)}$ such that
\begin{itemize}
\item[($\Lambda$0')] $\lambda$ is concentrated on the set $\ssS$ of all irreducible $\ell$-tuples
 in $\bbB^{(\ell)}(R)$, and for any $u, v \in \ffF_{\ell,1}^{loc}$, $u[\ueT] = v[\ueT]$ iff $u$ and
 $v$ are equal $\lambda$-almost everywhere
\end{itemize}
and conditions \textup{(F1)} as well as \textup{($\Lambda$1)--($\Lambda$3)} hold.
\end{pro}
\begin{proof}
Assume $\ueT$ acts on $\HHh$. There is a sequence $\HHh_1,\HHh_2,\ldots$ (finite or not) of reducing
subspaces for $\ueT$ such that $\HHh = \bigoplus_{n\geqsl1} \HHh_n$, $\WwW(\ueT^{(n)})$ is type
$\tI_{p_n}$ (for some $p_n > 0$) and $\WwW'(\ueT^{(n)})$ is commutative for any $n$ where
$\ueT^{(n)} \df \ueT\bigr|_{\HHh_n}$ (to convince oneself that such a decomposition exists, consult,
for example, Theorem~3.6.1 in \cite{pn1}). Now to each of $\ueT^{(n)}$ we apply \LEM{sep} to obtain
a respective probabilistic measure $\lambda_n$ and a $*$-homomorphism $u \mapsto u[\ueT^{(n)}]$
of $\ffF_{\ell,1}^{loc}$ onto $\WwW(\ueT^{(n)})$. For any $u \in \ffF_{\ell,1}^{loc}$ we now put
$u[\ueT] \df \bigoplus_{n\geqsl1} u[\ueT^{(n)}]$. We also put $\lambda \df \sum_{n=1}^{\infty}
2^{-n} \lambda_n$. We see that conditions ($\Lambda$0') and ($\Lambda$1) as well as the first claim
of ($\Lambda$3) hold (and thus $u[\ueT]$ is a bounded operator). It is also readily seen that
the assignment $u \mapsto u[\ueT]$ correctly defines a unital $*$-homomorphism
of $\ffF_{\ell,1}^{loc}$ into $\BBb(\HHh)$. To conclude that in fact $u[\ueT] \in \WwW(\ueT)$ for
any $u$, we employ \THM{F}: the family $\EeE \df \{u \in \ffF_{\ell,1}^{loc}\dd\ u[\ueT] \in
\WwW(\ueT)\}$ satisfies all assumptions of the aforementioned result for $\ffF_0 =
\ffF_{\ell,1}^{loc}$ (thanks to ($\Lambda$1)), and hence $\EeE = \ffF_{\ell,1}^{loc}$.\par
We turn to ($\Lambda$2). Assume $u_n$ are as specified there. Then $u_n[\ueT^{(m)}]$ converge to $0$
in the strong operator topology (when $n$ tends to $\infty$) for each fixed $m$. So, using
the diagonal argument, we conclude that there is a subsequence $(u_{\nu_n})_{n=1}^{\infty}$ such
that the functions $u_{\nu_n}$ converge pointwise $\lambda_k$-almost everywhere to the zero function
in $\ffF_{\ell,1}^{loc}$ for each $k$. Since the set $\bbB_0 \df \{\ueX \in \mmM^{(\ell)}\dd\
\lim_{n\to\infty} \|u_{\nu_n}(\ueX)\| = 0\}$ is Borel, we see from the definition of $\lambda$ that
$\lambda(\bbB_0) = 1$, which proves ($\Lambda$2).\par
Finally, we turn to the remainder of ($\Lambda$3). To this end, let $S \in \WwW(\ueT)$ (resp.\
$S \in \ZZz(\WwW(\ueT))$). It follows from Kaplansky's density theorem that there are polynomials
$p_n \in \ffF_{\ell,1}^{loc}$ in $2\ell$ noncommuting variables such that the operators $p_n[\ueT]$
converge to $S$ in the strong operator topology. Then (by ($\Lambda$3) for $\ueT^{(k)}$) for any $k$
there is a function $u_k \in \ffF_{\ell,1}^{bd}$ (resp.\ $u_k \in \ZZz(\ffF_{\ell,1}^{bd})$) such
that $\|u_k\| \leqsl \|S\|$ and
\begin{equation*}
\lim_{n\to\infty} (p_n[\ueT^{(k)}] - u_k[\ueT^{(k)}]) = 0.
\end{equation*}
We now employ ($\Lambda$2) for each $k$. Using again the diagonal argument and passing
to a subsequence, we may assume that the matrix functions $p_n$ converge pointwise
$\lambda_k$-almost everywhere to $u_k$ for any $k$. Let $\ddD$ consist of all $\ell$-tuples $\ueX
\in \mmM^{(\ell)}$ for which the sequence $(p_n(\ueX))_{n=1}^{\infty}$ is convergent to a matrix
(resp.\ to a scalar multiple of the unit matrix) whose norm does not exceed $\|S\|$. Then $\ddD \in
\Bb(\mmM^{(\ell)})$ and $\lambda_k(\ddD) = 1$. Consequently, $\lambda(\ddD) = 1$ as well. Let $u'\dd
\ddD \to \mmM$ be the pointwise limit of the polynomials $p_n$. We claim that there is $u \in
\ffF_{\ell,1}^{bd}$ (resp.\ $u \in \ZZz(\ffF_{\ell,1}^{bd})$) that extends $u'$ and has the same
norm as $u'$. Assume we have such a function $u$. Then $u$ and $u_k$ are equal $\lambda_k$-almost
everywhere and thus $u[\ueT^{(k)}] = u_k[\ueT^{(k)}]$ (by ($\Lambda$0)). So, $u[\ueT^{(k)}]$ is
the limit of $p_n[\ueT^{(k)}]$ in the strong operator topology. Consequently, $u[\ueT] =
\bigoplus_{k=1}^{\infty} u[\ueT^{(k)}]$ is the limit of $p_n[\ueT]$ and hence $S = u[\ueT]$, which
completes the proof of ($\Lambda$3). So, we see it is enough to show the existence of $u$.\par
First note that $\|u'\| \leqsl \|S\|$, by the very definition of $\ddD$. Take a $\sigma$-compact
kernel $\kkK$ of $\mmM^{(\ell)}$ and observe that:
\begin{enumerate}[($\ddD$1)]
\item if $\ueX, \ueY \in \mmM^{(\ell)}$ are such that $\ueX \oplus \ueY \in \ddD$, then both $\ueX$
 and $\ueY$ belong to $\ddD$; and
\item whenever $\ueX_1,\ldots,\ueX_s$ belong to $\ddD$ and $V$ belongs to $\uuU_N$ with $N =
 \sum_{j=1}^s d(\ueX_j)$, then $V . (\bigoplus_{j=1}^s \ueX_j) \in \ddD$ and $u'(V .
 (\bigoplus_{j=1}^s \ueX_j)) = V . (\bigoplus_{j=1}^s u'(\ueX_j))$.
\end{enumerate}
These two properties imply that for any $\ueX \in \ddD$ there are $V \in \uuU_{d(\ueX)}$ and some
$\ueX_1,\ldots,\ueX_p \in \kkK \cap \ddD$ with $\ueX = V . (\bigoplus_{j=1}^p \ueX_j)$. Now let
$u_0\dd \kkK \to \mmM$ be a function that coincides with $u'$ on $\kkK \cap \ddD$ and vanishes
at each point of $\kkK \setminus \ddD$. It follows from \COR{ext} (and \COR{center}) that there is
$u \in \ffF_{\ell,1}^{bd}$ (resp.\ $u \in \ZZz(\ffF_{\ell,1}^{bd})$) which extends $u_0$ and has
the same norm as $u_0$. Then automatically $u$ extends $u'$ (by ($\ddD$2)) and $\|u\| = \|u'\|$.
\end{proof}

\begin{rem}{Linfty}
It follows from \PRO{sep} that for any \FTI{} $\ell$-tuple $\ueT$ of bounded operators
on a separable Hilbert space, there exists a probabilistic Borel measure $\lambda$
on $\mmM^{(\ell)}$ such that the assignment $u \mapsto u[\ueT]$ defines a $*$-isomorphism
of $L^{\infty}_{cm}(\lambda)$ onto $\WwW(\ueT)$ where $L^{\infty}_{cm}(\lambda)$ is the quotient
$C^*$-algebra $\ffF_{\ell,1}^{bd} / \lambda$ consisting of all equivalence classes of (all) bounded
compatible Borel functions from $\mmM^{(\ell)}$ into $\mmM$ with respect to $\lambda$-almost
everywhere equality.
\end{rem}

\begin{thm}{nonsep}
Let $\ueT$ be an \FTI{} $\ell$-tuple of bounded operators on a Hilbert space $\HHh$. There exists
a unique unital $*$-homomorphism $\ffF_{\ell,1}^{loc} \ni u \mapsto u[\ueT] \in \BBb(\HHh)$ such
that \textup{(F1)} holds and
\begin{itemize}
\item[($\Lambda$1')] whenever $u_n \in \ffF_{\ell,1}^{loc}$ are uniformly bounded
 on $\bbB^{(\ell)}(\|\ueT\|)$ and converge pointwise to $u \in \ffF_{\ell,1}^{loc}$, then
 $u_n[\ueT]$ converge to $u[\ueT]$ in the $*$-strong operator topology.
\end{itemize}
Moreover,
\begin{itemize}
\item[($\Lambda$3')] $\|u[\ueT]\| \leqsl \{\|u(\ueX)\|\dd\ \ueX \in \bbB^{(\ell)}(\|\ueT\|)\}$ and
 $u[\ueT] \in \WwW(\ueT)$ for any $u \in \ffF_{\ell,1}^{loc}$; and $v[\ueT] \in \ZZz(\WwW(\ueT))$
 for each $v \in \ZZz(\ffF_{\ell,1}^{loc})$.
\end{itemize}
\end{thm}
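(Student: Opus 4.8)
The plan is to reduce the non-separable case to the separable one treated in \PRO{sep} by a standard separable-reduction argument. First I would prove \emph{uniqueness}: if $u \mapsto u[\ueT]$ is any unital $*$-homomorphism satisfying (F1) and ($\Lambda1'$), then it is determined on the $*$-subalgebra generated by the coordinate projections $\pi^{(\ell)}_j$ and the unit (namely, the polynomials in $2\ell$ noncommuting variables), and this subalgebra is dense in $\ffF_{\ell,1}^{loc}$ in the sense required by \THM{F}. More precisely, the set $\eeE$ of all $u \in \ffF_{\ell,1}^{loc}$ on which two such homomorphisms agree is a unital $*$-subalgebra containing each $\pi^{(\ell)}_j$, closed under $u \mapsto (u)^{-1}$ (because a $*$-homomorphism sends invertible elements to invertible elements and preserves inverses), and closed under bounded pointwise limits (by ($\Lambda1'$), since $*$-strong limits are unique). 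Hence $\eeE = \ffF_{\ell,1}^{loc}$ by \THM{F}, giving uniqueness.

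For \emph{existence}, I would first record that $\WwW(\ueT) = \WwW''(\ueT)$ is finite and type~$\tI$ (by \LEM{aff}, since $\ueT$ is \FTI), so by reduction theory $\HHh$ decomposes as an orthogonal sum $\HHh = \bigoplus_{s \in S} \HHh_s$ of reducing subspaces for $\ueT$ on each of which $\ueT$ restricts to an \FTI{} tuple acting on a \textbf{separable} space: indeed, take a maximal family of cyclic projections in $\WwW'(\ueT)$, or invoke the structure of finite type~$\tI$ algebras directly. Writing $\ueT^{(s)} \df \ueT\bigr|_{\HHh_s}$, each $\ueT^{(s)}$ is \FTI{} by \LEM{ideal}(B) and acts on a separable Hilbert space with $\|\ueT^{(s)}\| \leqsl \|\ueT\|$, so \PRO{sep} yields unital $*$-homomorphisms $u \mapsto u[\ueT^{(s)}] \in \WwW(\ueT^{(s)})$ satisfying (F1) and ($\Lambda3$). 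Define $u[\ueT] \df \bigoplus_{s \in S} u[\ueT^{(s)}]$; by ($\Lambda3$) for each $s$ the norms $\|u[\ueT^{(s)}]\|$ are uniformly bounded by $\sup\{\|u(\ueX)\| \dd \ueX \in \ssS_s\} \leqsl \sup\{\|u(\ueX)\| \dd \ueX \in \bbB^{(\ell)}(\|\ueT\|)\} < \infty$ (finite since $u$ is locally bounded), so $u[\ueT]$ is a well-defined bounded operator, the map is clearly a unital $*$-homomorphism, and (F1) and the norm bound in ($\Lambda3'$) hold.

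It remains to verify ($\Lambda1'$) and the membership assertions in ($\Lambda3'$). For ($\Lambda1'$): given uniformly bounded $u_n$ converging pointwise to $u$, for each fixed vector $\xi = (\xi_s)_s \in \HHh$ and each $s$, ($\Lambda1$) for $\ueT^{(s)}$ gives $u_n[\ueT^{(s)}]\xi_s \to u[\ueT^{(s)}]\xi_s$ and similarly for adjoints; combining this coordinatewise convergence with the uniform norm bound on $u_n[\ueT]$ and an $\epsi/3$ argument over finitely many coordinates carrying most of $\|\xi\|^2$, one gets $u_n[\ueT]\xi \to u[\ueT]\xi$ and $u_n[\ueT]^*\xi \to u[\ueT]^*\xi$, i.e.\ $*$-strong convergence. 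For $u[\ueT] \in \WwW(\ueT)$: apply \THM{F} once more, to $\eeE \df \{u \in \ffF_{\ell,1}^{loc} \dd u[\ueT] \in \WwW(\ueT)\}$; this is a unital $*$-subalgebra containing the $\pi^{(\ell)}_j$ (since $\pi^{(\ell)}_j[\ueT] = T_j \in \WwW(\ueT)$), closed under inverses (a von Neumann algebra is inverse-closed for bounded inverses), and closed under bounded pointwise limits by ($\Lambda1'$) just proved (as $\WwW(\ueT)$ is $*$-strongly closed), so $\eeE = \ffF_{\ell,1}^{loc}$. The central statement $v[\ueT] \in \ZZz(\WwW(\ueT))$ for $v \in \ZZz(\ffF_{\ell,1}^{loc})$ follows the same way, now using \COR{center} to identify $\ZZz(\ffF_{\ell,1}^{loc})$ with the functions taking scalar values on irreducible tuples, together with ($\Lambda3$) for the pieces $\ueT^{(s)}$, which places $v[\ueT^{(s)}]$ in $\ZZz(\WwW(\ueT^{(s)}))$; assembling the direct sum and using that the center of $\bigoplus_s \WwW(\ueT^{(s)})$ contains $\bigoplus_s \ZZz(\WwW(\ueT^{(s)}))$, then intersecting with $\WwW(\ueT)$, finishes it.

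The main obstacle I anticipate is the $*$-strong continuity step ($\Lambda1'$) in the non-separable setting: the direct sum $\bigoplus_s$ may be uncountable, so one cannot apply a diagonal/subsequence argument globally, and one must argue purely with the uniform bound plus coordinatewise convergence; fortunately $*$-strong convergence is tested one vector at a time, and any single vector lives in the closed span of countably many of the $\HHh_s$, so the separable results suffice after all. A secondary point requiring care is ensuring the separable reducing decomposition genuinely exists for an arbitrary finite type~$\tI$ von Neumann algebra (not just a $\sigma$-finite one); this is where one leans on the structure theory — every finite type~$\tI$ algebra is a direct sum of homogeneous pieces $\tI_n \otimes L^\infty(\mu)$, each of which splits into separably-acting summands.
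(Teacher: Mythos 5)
Your plan follows the paper's route almost exactly: decompose $\HHh$ into separable reducing subspaces $\HHh_s$ (the paper cites Theorem~2.2.4 of \cite{pn1}), restrict $\ueT$ to each piece via \LEM{ideal}, apply \PRO{sep} on each piece, form the direct sum, and invoke \THM{F}. The only substantive difference is organizational: the paper packs uniqueness and the inclusion $u[\ueT]\in\WwW(\ueT)$ into a \emph{single} application of \THM{F}, by taking $\EeE = \{u\in\ffF_{\ell,1}^{loc}\dd\ \Phi(u)=u[\ueT]\in\WwW(\ueT)\}$ for a competing homomorphism $\Phi$, whereas you run \THM{F} twice; both work. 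One small imprecision in your last step: for the central inclusion $v[\ueT]\in\ZZz(\WwW(\ueT))$ you cite ($\Lambda$3), but ($\Lambda$3) goes the \emph{other} way (it produces a function in $\ZZz(\ffF_{\ell,1}^{bd})$ from a given operator in $\ZZz(\WwW(\ueT))$, not conversely). The fact you need is easy and is exactly what the paper uses: since $v$ commutes with each $\pi^{(\ell)}_j$ and $(\pi^{(\ell)}_j)^*$, the operator $v[\ueT]$ commutes with each $T_j$ and $T_j^*$, hence $v[\ueT]\in\WwW(\ueT)\cap\WwW'(\ueT)=\ZZz(\WwW(\ueT))$. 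This avoids \COR{center} and the piecewise assembly entirely; otherwise your argument is sound.
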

\begin{proof}
It may readily be shown (and follows from Theorem~2.2.4 in \cite{pn1}) that there is a collection
$\{\HHh_s\}_{s \in S}$ of separable reducing subspaces for $\ueT$ such that $\HHh =
\bigoplus_{s \in S} \HHh_s$. Then each of $\ueT^{(s)} \df \ueT\bigr|_{\HHh_s}$ is \FTI{} as well
(see \LEM{ideal}) and therefore we have, by \PRO{sep}, respective unital $*$-homomorphisms
$u \mapsto u[\ueT^{(s)}]$. For any $u \in \ffF_{\ell,1}^{loc}$ we put $u[\ueT] \df
\bigoplus_{s \in S} u[\ueT^{(s)}]$. Condition ($\Lambda$3) (for each of $\ueT^{(s)}$) implies that
$u[\ueT]$ is bounded and the estimation for the norm of $u[\ueT]$ specified in ($\Lambda$3') holds.
It is also immediate that the assignment $u \mapsto u[\ueT]$ correctly defines a unital
$*$-homomorphism for which (F1) and ($\Lambda$1') hold. To show its uniqueness and that $u[\ueT] \in
\WwW(\ueT)$ for any $u \in \ffF_{\ell,1}^{loc}$, we employ \THM{F} with $\ffF_0 =
\ffF_{\ell,1}^{loc}$. To this end, assume $\Phi\dd \ffF_{\ell,1}^{loc} \to \BBb(\HHh)$ is a unital
$*$-homomorphism for which respective conditions (F1) and (F5) hold and put $\EeE \df \{u \in
\ffF_{\ell,1}^{loc}\dd\ \Phi(u) = u[\ueT] \in \WwW(\ueT)\}$. It is readily seen that $\EeE$ is
a unital $*$-subalgebra of $\ffF_{\ell,1}^{loc}$ which satisfies conditions (E0)--(E2) (mainly
thanks to ($\Lambda$1') and (F5)) and therefore $\EeE = \ffF_{\ell,1}^{loc}$, by \THM{F}. Finally,
if $v \in \ZZz(\ffF_{\ell,1}^{loc})$, then $v$ commutes with each of $\pi^{(\ell)}_1,
(\pi^{(\ell)}_1)^*,\ldots,\pi^{(\ell)}_{\ell},(\pi^{(\ell)}_{\ell})^*$ and hence $v[\ueT]$ commutes
with each of $T_1,T_1^*,\ldots,T_{\ell},T_{\ell}^*$ (where $(T_1,\ldots,T_{\ell}) = \ueT$).
Consequently, $v[\ueT] \in \WwW(\ueT) \cap \WwW'(\ueT) = \ZZz(\ueT)$ and we are done.
\end{proof}

Now we want to extend the functional calculus built in \THM{nonsep} to all functions
in $\ffF_{\ell,1}$. We shall do this with the aid of the next two results. The first of them is
very simple and was established in \cite{pn4}. To simplify its statement, let us recall a suggestive
notation introduced there. Whenever $\MmM$ is a fnite type~$\tI$ von Neumann algebra acting
on $\HHh$, $\{Z_j\}_{j \in J}$ is a countable collection of mutually orthogonal projections
in $\ZZz(\MmM)$ that sum up to the unit of $\MmM$ (in the strong operator topology) and
$\{S_j\}_{j \in J}$ is a collection of operators in $\MmM$, a (possibly unbounded) operator $T \df
\sum_{j \in J} S_j Z_j$ is defined as follows. The domain $\DdD$ of $T$ consists of all vectors
$x \in \HHh$ such that $\sum_{j \in J} \|S_j Z_j x\|^2 < \infty$, and for each $x \in \DdD$ we put
\begin{equation}\label{eqn:series}
T x \df \sum_{j \in J} S_j Z_j x.
\end{equation}
Note that the ranges of $S_j Z_j$ are mutually orthogonal and thus all summands of the series
appearing in \eqref{eqn:series} are mutually orthogonal as well.

\begin{lem}[\cite{pn4}]{series}
Let $\MmM$ be a finite type~$\tI$ von Neumann algebra and $\{Z_j\}_{j \in J}$ be a collection
of mutually orthogonal projections in $\ZZz(\MmM)$ that sum up to the unit of $\MmM$.
\begin{enumerate}[\upshape(A)]
\item For any collection $\{S_j\}_{j \in J}$ of operators in $\MmM$, the operator $\sum_{j \in J}
 S_j Z_j$ is closed, densely defined and affiliated with $\MmM$; and $(\sum_{j \in J} S_j Z_j)^* =
 \sum_{j \in J} S_j^* Z_j$.
\item For any two collections $\{S_j\}_{j \in J}$ and $\{T_j\}_{j \in J}$ of operators in $\MmM$,
 \begin{align*}
 \sum_{j \in J} (S_j+T_j) Z_j &= \Bigl(\sum_{j \in J} S_j Z_j\Bigr) \plusaff
 \Bigl(\sum_{j \in J} T_j Z_j\Bigr) \quad \textup{and}\\
 \sum_{j \in J} (S_j T_j) Z_j &= \Bigl(\sum_{j \in J} S_j Z_j\Bigr) \cdotaff
 \Bigl(\sum_{j \in J} T_j Z_j\Bigr)
 \end{align*}
\end{enumerate}
\textup{(}where the operations on the right-hand sides are those in $\hat{\MmM}$\textup{)}.
\end{lem}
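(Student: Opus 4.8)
The plan is to derive everything from three elementary observations. Since each $Z_j$ lies in $\ZZz(\MmM)$, we have $S_j Z_j = Z_j S_j$, so the range of $S_j Z_j$ is contained in $Z_j\HHh$; hence the operators $S_j Z_j$ have pairwise orthogonal ranges. Moreover $\sum_{j\in J} Z_j = I$ in the strong topology, so $\sum_{j\in J}\|Z_j x\|^2 = \|x\|^2$ for every $x \in \HHh$. Write $T \df \sum_{j\in J} S_j Z_j$ (domain $\DdD$ as defined before the lemma) and, for a finite set $F \subseteq J$, $Z_F \df \sum_{j\in F} Z_j$. Note $Z_F\HHh \subseteq \DdD$ (only finitely many summands of $\sum_j\|S_j Z_j (Z_F x)\|^2$ are nonzero) and $Z_F x \to x$ for every $x$ as $F\uparrow J$; this already gives that $\DdD$ is dense.

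For closedness, suppose $x_n \to x$ and $Tx_n \to y$. Multiplying the convergent series $Tx_n = \sum_k S_k Z_k x_n$ by $Z_j$ and using centrality kills every summand but the $j$th, so $Z_j Tx_n = S_j Z_j x_n$; letting $n\to\infty$ (here $S_j Z_j$ and $Z_j$ are bounded) yields $S_j Z_j x = Z_j y$ for every $j$. Then $\sum_j\|S_j Z_j x\|^2 = \sum_j\|Z_j y\|^2 = \|y\|^2 < \infty$, so $x\in\DdD$ and $Tx = \sum_j S_j Z_j x = \sum_j Z_j y = y$. Affiliation is immediate: a unitary $U\in\MmM'$ commutes with every $S_j$ and every $Z_j$, hence $U\DdD = \DdD$ and $UTx = TUx$ for $x\in\DdD$, that is $UTU^{-1}=T$. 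For the adjoint, put $T' \df \sum_j S_j^* Z_j$ (closed and densely defined by what we have just shown). Using $(S_jZ_j)^* = S_j^*Z_j$ and self-adjointness of $Z_j$, a direct computation gives $\langle Tx,y\rangle = \sum_j\langle x,S_j^*Z_j y\rangle = \langle x,T'y\rangle$ for $x\in\DdD(T)$, $y\in\DdD(T')$, so $T'\subseteq T^*$. For the reverse inclusion take $y\in\DdD(T^*)$ and set $z\df T^*y$; testing the identity $\langle T(Z_F w),y\rangle = \langle Z_F w,z\rangle$ against an arbitrary $w$ gives $\sum_{j\in F}S_j^*Z_j y = Z_F z$ for every finite $F$. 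Since the summands on the left are mutually orthogonal, $\sum_{j\in F}\|S_j^*Z_j y\|^2 = \|Z_F z\|^2 \leqsl \|z\|^2$, and letting $F\uparrow J$ we get $\sum_j\|S_j^*Z_j y\|^2 < \infty$, so $y\in\DdD(T')$ and $T'y = \lim_F Z_F z = z = T^*y$. Hence $T^* = T'$, completing part (A).

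For part (B), recall (as in the Murray--von Neumann description of $\hat{\MmM}$ mentioned in the introduction) that the operations of $\hat{\MmM}$ are the strong sum $A\plusaff B = \overline{A+B}$ and the strong product $A\cdotaff B = \overline{AB}$. Put $A\df\sum_j S_jZ_j$, $B\df\sum_j T_jZ_j$, $C\df\sum_j(S_j+T_j)Z_j$, $D\df\sum_j(S_jT_j)Z_j$. A one-line estimate with $\|a+b\|^2\leqsl 2\|a\|^2+2\|b\|^2$ shows $A+B\subseteq C$, and, using $Z_j(Bx)=T_jZ_jx$ (again by centrality), that $AB\subseteq D$; since $C$ and $D$ are closed by part (A), $\overline{A+B}\subseteq C$ and $\overline{AB}\subseteq D$. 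For the opposite inclusions let $x\in\DdD(C)$ (resp.\ $x\in\DdD(D)$); then $Z_Fx\in Z_F\HHh\subseteq\DdD(A+B)$ (resp.\ $\subseteq\DdD(AB)$), $Z_Fx\to x$, and pushing $Z_F$ past the central projections gives $(A+B)(Z_Fx)=Z_F(Cx)\to Cx$ (resp.\ $(AB)(Z_Fx)=Z_F(Dx)\to Dx$). Closedness of $\overline{A+B}$ (resp.\ $\overline{AB}$) then yields $x\in\DdD(\overline{A+B})$ with $\overline{A+B}\,x=Cx$ (resp.\ the analogous statement for $D$). Hence $C=A\plusaff B$ and $D=A\cdotaff B$.

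The verifications are all mild; the one place requiring a little care is the reverse inclusion $T^*\subseteq\sum_j S_j^*Z_j$ in part (A), where one must recover the square-summability $\sum_j\|S_j^*Z_j y\|^2<\infty$ from the finite identities $\sum_{j\in F}S_j^*Z_j y = Z_F(T^*y)$ --- this is exactly where the orthogonality of the ranges of the $S_jZ_j$ is indispensable. The same truncation-by-$Z_F$ device, together with the closedness supplied by part (A) and the identification of $\plusaff,\cdotaff$ with strong closures, drives both equalities in part (B); if $J$ is uncountable one simply reads all the limits over the net of finite subsets of $J$, which closed operators accommodate since their graphs are closed.
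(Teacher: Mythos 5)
Your proof is correct and self-contained. Note that the paper itself does not prove this lemma; it cites \cite{pn4} and uses the result as a black box, so there is no in-paper argument to compare against. Your route is the natural one and the details check out: the orthogonality of the ranges of the $S_jZ_j$ (a consequence of centrality of the $Z_j$), the truncation by the net $Z_F=\sum_{j\in F}Z_j$ with $Z_F\HHh\subseteq\DdD$, the adjoint computation hinging on $\sum_{j\in F}S_j^*Z_jy=Z_F(T^*y)$ and the resulting square-summability bound $\sum_{j\in F}\|S_j^*Z_jy\|^2\leqsl\|T^*y\|^2$, and for part (B) the identification of $\plusaff,\cdotaff$ with the Murray--von Neumann strong sum and product together with the inclusion $A+B\subseteq C$, $AB\subseteq D$ and the reverse inclusion via $(A+B)(Z_Fx)=Z_F(Cx)\to Cx$ and closedness from part (A). All steps are sound.
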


For simplicity, let us call two functions $u$ and $v$ in $\ffF_{\ell,1}$ \textit{disjoint} if for
any irreducible $\ell$-tuple $\ueX \in \mmM^{(\ell)}$, at least one of $u(\ueX)$ and $v(\ueX)$ is
zero.

\begin{lem}{unbd}
Let $\ueT$ be an \FTI{} $\ell$-tuple of bounded operators on a Hilbert space $\HHh$.
\begin{enumerate}[\upshape(A)]
\item If $\{b_j\}_{j \in J}$ is a countable collection of mutually disjoint functions
 in $\ffF_{\ell,1}^{bd}$ and $\sum_{j \in J} b_j(\ueX) = I_{d(\ueX)}$ for each irreducible $\ueX \in
 \mmM^{(\ell)}$, then $\{b_j[\ueT]\}_{j \in J}$ is a collection of mutually orthogonal projections
 from $\ZZz(\WwW(\ueT))$ that sum up to the unit of $\WwW(\ueT)$.
\item Let $\{b_j\}_{j \in J}$ and $\{b_s'\}_{s \in S}$ be two countable collections of mutually
 disjoint functions in $\ffF_{\ell,1}^{bd}$ such that $\sum_{j \in J} b_j(\ueX) = \sum_{s \in S}
 b_s'(\ueX) = I_{d(\ueX)}$ for any irreducible $\ueX \in \mmM^{(\ell)}$. If $\{u_j\}_{j \in J}$ and
 $\{u_s'\}_{s \in S}$ are two collections of functions in $\ffF_{\ell,1}^{bd}$ such that
 \begin{equation}\label{eqn:u-u'}
 \sum_{j \in J} u_j(\ueX) b_j(\ueX) = \sum_{s \in S} u_s'(\ueX) b_s'(\ueX)
 \end{equation}
 for any irreducible $\ueX \in \mmM^{(\ell)}$, then $\sum_{j \in J} u_j[\ueT] b_j[\ueT]$ and
 $\sum_{s \in S} u_s'[\ueT] b_s'[\ueT]$ coincide and are affiliated with $\WwW(\ueT)$.
\end{enumerate}
\end{lem}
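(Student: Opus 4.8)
\textbf{Part (A).} The hypotheses force each $b_j$ to be a ``characteristic function'' on irreducible tuples: fixing an irreducible $\ueX\in\mmM^{(\ell)}$, mutual disjointness shows that at most one matrix $b_j(\ueX)$ is non-zero, so $\sum_{j\in J}b_j(\ueX)=I_{d(\ueX)}$ leaves exactly one index $j_{0}$ with $b_{j_{0}}(\ueX)=I_{d(\ueX)}$ and $b_j(\ueX)=0$ for $j\ne j_{0}$. Hence every value of every $b_j$ on an irreducible tuple is $0$ or a unit matrix; consequently $b_j\in\ZZz(\ffF_{\ell,1}^{loc})$ by \COR{center}, while the identities $b_j^{2}=b_j=b_j^{*}$ and $b_jb_k=0$ $(j\ne k)$, valid on irreducible tuples, hold everywhere, since a compatible function is determined by its values on irreducible tuples (every tuple is a unitary conjugate of a finite direct sum of irreducible ones). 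Applying the unital $*$-homomorphism $u\mapsto u[\ueT]$ of \THM{nonsep}, together with its clause ($\Lambda$3') on the center, I obtain that the $b_j[\ueT]$ are mutually orthogonal projections lying in $\ZZz(\WwW(\ueT))$. For the completeness relation: if $J$ is finite, then $\sum_{j\in J}b_j$ is the unit of $\ffF_{\ell,1}^{bd}$ and unitality of $u\mapsto u[\ueT]$ finishes it; if $J=\{j_{1},j_{2},\dots\}$ is infinite, the partial sums $c_n:=b_{j_{1}}+\dots+b_{j_{n}}$ are projections of the $C^{*}$-algebra $\ffF_{\ell,1}^{bd}$ (so $\|c_n\|\leqsl1$) converging pointwise to its unit, whence ($\Lambda$1') of \THM{nonsep} yields that $c_n[\ueT]$ converges $*$-strongly to the identity operator on $\HHh$; equivalently, $\sum_{j\in J}b_j[\ueT]$ converges strongly to the unit of $\WwW(\ueT)$.

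\textbf{Part (B).} The plan is to pass to the common refinement $\{b_j\cdot b_s'\}_{(j,s)\in J\times S}$, which is again a countable family of mutually disjoint functions in $\ffF_{\ell,1}^{bd}$ summing to $I_{d(\ueX)}$ at every irreducible $\ueX$ (at such an $\ueX$ exactly one $b_j(\ueX)$ and exactly one $b_s'(\ueX)$ equal the unit matrix, all others vanishing). Part (A) then shows that $\{b_j[\ueT]\,b_s'[\ueT]\}_{(j,s)}=\{(b_jb_s')[\ueT]\}_{(j,s)}$ is a family of mutually orthogonal central projections of $\WwW(\ueT)$ summing to the unit. Since $\WwW(\ueT)=\WwW''(\ueT)$ is finite of type~$\tI$ and each $u_j[\ueT]$, $u_s'[\ueT]$ belongs to $\WwW(\ueT)$ (by ($\Lambda$3')), \LEM{series} makes all of
\[
P:=\sum_{j\in J}u_j[\ueT]\,b_j[\ueT],\qquad Q:=\sum_{(j,s)}u_j[\ueT]\,(b_jb_s')[\ueT]
\]
\[
P':=\sum_{s\in S}u_s'[\ueT]\,b_s'[\ueT],\qquad Q':=\sum_{(j,s)}u_s'[\ueT]\,(b_jb_s')[\ueT]
\]
closed, densely defined operators affiliated with $\WwW(\ueT)$.

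Finally I would identify these four operators. A regrouping gives $P=Q$ and $P'=Q'$: since the $b_s'[\ueT]$ are central, mutually orthogonal and sum to the identity, $\|u_j[\ueT]b_j[\ueT]x\|^{2}=\sum_{s}\|u_j[\ueT](b_jb_s')[\ueT]x\|^{2}$ for every $j$ and every $x$, so $P$ and $Q$ have the same domain and the same action (and symmetrically $P'=Q'$). Now $Q$ and $Q'$ are sums, in the sense of \LEM{series}, over the \emph{same} family of central projections, so it is enough to compare them term by term: for each $(j,s)$,
\[
u_j[\ueT]\,(b_jb_s')[\ueT]=(u_jb_jb_s')[\ueT]=(u_s'b_jb_s')[\ueT]=u_s'[\ueT]\,(b_jb_s')[\ueT],
\]
the middle equality holding because the compatible functions $u_jb_jb_s'$ and $u_s'b_jb_s'$ agree on every irreducible tuple --- at an irreducible $\ueX$ both vanish unless $b_j(\ueX)=b_s'(\ueX)=I_{d(\ueX)}$, and then \eqref{eqn:u-u'} gives $u_j(\ueX)=u_s'(\ueX)$ --- hence everywhere. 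Thus $P=Q=Q'=P'$, which is the asserted coincidence, while affiliation with $\WwW(\ueT)$ comes from \LEM{series}. The one step that really needs care is the legitimacy of the regroupings $P=Q$ and $P'=Q'$ for these possibly unbounded operators, and that is precisely what the built-in orthogonality of the ranges of the summands in \LEM{series} delivers.
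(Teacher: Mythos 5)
Your proof is correct and follows essentially the same route as the paper. Part (A) is identical in substance: you observe that on irreducible tuples the $b_j$ take only the values $0$ and the unit matrix, invoke \COR{center} to place them in $\ZZz(\ffF_{\ell,1}^{loc})$, verify the projection and orthogonality identities pointwise on irreducibles (hence everywhere, by compatibility), and conclude completeness from the bounded pointwise convergence of the partial sums via ($\Lambda 1'$). For Part (B) the paper likewise passes to the common refinement $v_{(j,s)} = b_j b_s'$, notes $u_j v_{(j,s)} = u_s' v_{(j,s)}$ by \eqref{eqn:u-u'}, and uses the mutual orthogonality of the ranges to match domains and then actions. The only cosmetic difference is that you name the intermediate refined operators $Q$ and $Q'$ and split the comparison into the two regroupings $P=Q$, $P'=Q'$ followed by the term-by-term identity $Q=Q'$, whereas the paper runs these steps together in a single chain of equalities — same argument, slightly more explicit bookkeeping on your side.
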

\begin{proof}
We start from (A). Observe that under the assumptions of (A), $b_j(\ueX) \in \{0,I_{d(\ueX)}\}$ for
any $j \in J$ and irreducible $\ueX \in \mmM^{(\ell)}$. So, it follows from \COR{center} that all
$b_j$ belong to the center of $\ffF_{\ell,1}^{loc}$. Consequently, $Z_j \df b_j[\ueT] \in
\ZZz(\WwW(\ueT))$, by \THM{nonsep}. Moreover, $b_j = b_j^* b_j$ (because both these functions
coincide on the set of all irreducible $\ell$-tuples), and $b_j b_{j'} = 0$ for distinct $j$ and
$j'$, which implies that $Z_j$ are mutually orthogonal projections. Finally, since the partial sums
of $\sum_{j \in J} b_j$ are uniformly bounded and converge pointwise to the unit $\jJ$
of $\ffF_{\ell,1}$, we conclude that the partial sums of $\sum_{j \in J} Z_j$ converge to the unit
of $\WwW(\ueT)$.\par
Now assume $b_j$, $b_s'$, $u_j$ and $u_s'$ are as specified in (B). We deduce from part (A) and
\LEM{series} that both the operators $R \df \sum_{j \in J} u_j[\ueT] b_j[\ueT]$ and $R' \df
\sum_{s \in S} u_s'[\ueT] b_s'[\ueT]$ are well defined and affiliated with $\WwW(\ueT)$. To show
that $R = R'$, for any $\lambda \df (j,s) \in \Lambda \df J \times S$ we put $v_{\lambda} \df b_j
b_s'$. Observe that $u_j v_{(j,s)} = u_s' v_{(j,s)}$ (by \eqref{eqn:u-u'}), $u_j b_j =
\sum_{s \in S} u_j v_{(j,s)}$ and similarly $u_s' b_s' = \sum_{j \in J} u_s' v_{(j,s)}$ (and partial
sums of both these series are uniformly bounded), from which
we deduce that:
\begin{itemize}
\item $u_j[\ueT] v_{(j,s)}[\ueT] = u_s'[\ueT] v_{(j,s)}[\ueT]$; and
\item $u_j[\ueT] b_j[\ueT]$ is the limit of $\sum_{s \in S} u_j[\ueT] v_{(j,s)}[\ueT]$
 in the $*$-strong operator topology; and
\item $u_s'[\ueT] b_s'[\ueT]$ is the limit of $\sum_{j \in J} u_s'[\ueT] v_{(j,s)}[\ueT]$
 in the $*$-strong operator topology.
\end{itemize}
Since the summands of each of the series that appear above have mutually orthogonal disjoint ranges,
we conclude that for any $x \in \HHh$,
\begin{equation*}
\sum_{j \in J} \|u_j[\ueT] b_j[\ueT] x\|^2 = \sum_{j \in J} \sum_{s \in S} \|u_j[\ueT]
v_{(j,s)}[\ueT] x\|^2 = \sum_{s \in S} \|u_s'[\ueT] b_s'[\ueT] x\|^2,
\end{equation*}
which shows that the domains of $R$ and $R'$ coincide. An analogous argument proves that for any $x$
in this common domain, $\sum_{j \in J} u_j[\ueT] b_j[\ueT] x = \sum_{s \in S} u_s'[\ueT]
b_s'[\ueT] x$ and we are done.
\end{proof}

\begin{pro}{affil}
Let $\ueT$ be an \FTI{} $\ell$-tuple of bounded operators on a Hilbert space $\HHh$. There exists
a unique unital $*$-homomorphism $\ffF_{\ell,1} \ni f \mapsto f[\ueT] \in \hat{\WwW}$ such that
\textup{(F1)}, \textup{(F3)} and \textup{(F5)} hold where $\WwW \df \WwW(\ueT)$. Moreover, if $\HHh$
is separable, then
\begin{gather*}
\{u[\ueT]\dd\ u \in \ffF_{\ell,1}\} = \hat{\WwW},\\
\{u[\ueT]\dd\ u \in \ZZz(\ffF_{\ell,1})\} = \ZZz(\hat{\WwW}).
\end{gather*}
\end{pro}
\begin{proof}
The uniqueness part, as usual, follows from \THM{F} and is left to the reader. Here we focus only
on the existence part. Let $\Phi\dd \ffF_{\ell,1}^{loc} \ni u \mapsto u[\ueT] \in \WwW$ be a unital
$*$-homomorphism guaranteed by \THM{nonsep}. Fix an arbitrary $u \in \ffF_{\ell,1}$. For any
$n > 0$, let $\bbB_n$ consist of all irreducible $\ell$-tuples $\ueX \in \mmM^{(\ell)}$ such that
$n-1 \leqsl \|u(\ueX)\| < n$. Observe that $U . \ueX \in \bbB_n$ for each $\ueX \in \bbB_n$ and
$U \in \uuU_{d(\ueX)}$. We therefore conclude (using \COR{ext}) that there exists $b_n \in
\ffF_{\ell,1}^{bd}$ which vanishes at each irreducible $\ell$-tuple from $\mmM^{(\ell)} \setminus
\bbB_n$ and satisfies $b_n(\ueX) = I_{d(\ueX)}$ for any $\ueX \in \bbB_n$. Then $u b_n \in
\ffF_{\ell,1}^{bd}$ for all $n$, $b_n$ are mutually disjoint and $u$ is the pointwise limit
of the series $\sum_{n=1}^{\infty} (u b_n) b_n$. We define $u[\ueT]$ as $\sum_{n=1}^{\infty}
\Phi(u b_n) \Phi(b_n)$. It follows from \LEM{unbd} that $u[\ueT] \in \hat{\WwW}$. A standard
argument proves that the aforementioned operator $u[\ueT]$ coincides with $\Phi(u)$ for $u \in
\ffF_{\ell,1}^{loc}$. Consequently, conditions (F1), (F3) and (F5) are fulfilled. To check that
the assignment $u \mapsto u[\ueT]$ defines a $*$-homomorphism from $\ffF_{\ell,1}$ into
$\hat{\WwW}$, fix $u$ and $v$ in $\ffF_{\ell,1}$. We argue similarly as before. Let $\bbB_n'$
constist of all irreducible $\ueX \in \mmM^{(\ell)}$ for which $n-1 \leqsl
\max(\|u(\ueX)\|,\|v(\ueX)\|) < n$ and $b_n' \in \ffF_{\ell,1}^{bd}$ be a function that vanishes
at each irreducible $\ell$-tuple from $\mmM^{(\ell)} \setminus \bbB_n'$ and satisfies $b_n'(\ueX) =
I_{d(\ueX)}$ for any $\ueX \in \bbB_n'$. Then the functions $u b_n'$ and $v b_n'$ are bounded and
thus \LEM{unbd} implies that $u[\ueT] = \sum_{n=1}^{\infty} \Phi(u b_n') \Phi(b_n')$ and $v[\ueT] =
\sum_{n=1}^{\infty} \Phi(v b_n') \Phi(b_n')$. A similar reasoning shows that also $(uv)[\ueT] =
\sum_{n=1}^{\infty} \Phi(uv b_n') \Phi(b_n')$ and $(\alpha u + \beta v)[\ueT] = \sum_{n=1}^{\infty}
\Phi((\alpha u + \beta v) b_n') \Phi(b_n')$ for all scalars $\alpha, \beta \in \CCC$. Now it is
enough to apply \LEM{series}.\par
We turn to the additional claim of the proposition. Assume $\HHh$ is separable. Since for the centra
the proof goes similarly (because $\ZZz(\hat{\WwW}) = \hat{\ZZz}$ where $\ZZz = \ZZz(\WwW)$, see
\cite{pn4}), we shall show only the first additional conclusion. To this end, we fix $S \in
\hat{\WwW}$. It was shown in \cite{pn4} that then there exist $A \in \WwW$ and a sequence $Z_1,
Z_2,\ldots$ of mutually orthogonal projections in $\ZZz(\WwW)$ that sum up to the unit of $\WwW$ and
satisfy $S = \sum_{n=1}^{\infty} n A Z_n$. Further, let $\lambda$ be a probabilistic measure
as specified in \PRO{sep}. We conclude from that result that there are functions $v \in
\ffF_{\ell,1}^{bd}$ and $b_n \in \ZZz(\ffF_{\ell,1}^{bd})$ such that $v[\ueT] = A$, $b_n[\ueT] =
Z_n$ and $\|b_n\| \leqsl 1$. Further, since $b_n[\ueT] b_m[\ueT] = 0$ for distinct $n$ and $m$ and
$b_n^2[\ueT] = b_n[\ueT]$, property ($\Lambda$0') yields that $b_n b_m = 0$ and $b_n^2 = b_n$
$\lambda$-almost everywhere. Let $\aaA \in \Bb(\mmM^{(\ell)})$ be a set of full $\lambda$-measure
on which all the aforementioned equations hold. Then $b_n(\ueX) \in \{0,I_{d(\ueX)}\}$ for any
irreducible $X \in \aaA$. We leave it as an exercise that we may modify the sequence $b_1,
b_2,\ldots$ to obtain mutually disjoint functions $b_1',b_2',\ldots \in \ffF_{\ell,1}^{bd}$ such
that $b_n'$ and $b_n$ are equal $\lambda$-almost everywhere and $\sum_{n=1}^{\infty} b_n(\ueX) =
I_{d(\ueX)}$ for each irreducible $\ell$-tuple $\ueX \in \mmM^{(\ell)}$. Then $b_n'[\ueT] =
b_n[\ueT] = Z_n$, the pointwise limit $u$ of $\sum_{n=1}^{\infty} n v b_n'$ belongs
to $\ffF_{\ell,1}$ and \LEM{unbd} yields that $u[\ueT] = \sum_{n=1}^{\infty} n v[\ueT] b_n'[\ueT] =
S$, which finishes the proof.
\end{proof}

\begin{proof}[Proof of \THM{main2}]
As usual, uniqueness follows from \THM{F}. First we shall show existence and after that we shall
establish properties (F6)--(F9). To avoid misundestandings, the functional calculus $u \mapsto
u[\ueT]$ for $\ell$-tuples $\ueT$ of bounded operators obtained in \PRO{affil} shall be denoted
by $u \mapsto \Phi_{\ueT}(u)$. It follows from the results of \cite{pn4} that there is a sequence
$\HHh_1,\HHh_2,\ldots$ of mutually orthogonal reducing subspaces for $\ueT$ such that $\HHh =
\bigoplus_{n=1}^{\infty} \HHh_n$ and $\ueT^{(n)} \df \ueT\bigr|_{\HHh_n}$ consists of bounded
operators. For any $u \in \ffF_{\ell,1}$ we define $u[\ueT]$ as $\bigoplus_{n=1}^{\infty}
\Phi_{\ueT^{(n)}}(u)$. Since $\ffF_{\ell,1}^{bd}$ is a $C^*$-algebra (and the functions
$\Phi_{\ueT^{(n)}}$ are $*$-homomorphisms), we infer that $\|\Phi_{\ueT^{(n)}}(u)\| \leqsl \|u\|$
for any $u \in \ffF_{\ell,1}^{bd}$. This shows (F3), from which one deduces (F5); whereas (F1) and
(F4) are straightforward. Finally, defining, for any $u \in \ffF_{\ell,\ell'}$, $u[\ueT]$
as $((\pi^{(\ell')}_1 \circ u)[\ueT],\ldots,(\pi^{(\ell')}_{\ell'} \circ u)[\ueT])$, we see that
(F2) holds.\par
Further, (F7) is covered by \THM{nonsep}, whereas (F8) follows from \THM{F} (and (F5)). Also (F6)
follows from \THM{F}. Indeed, it suffices to check (F6) for $\ell'' = 1$. To this end, we fix $u \in
\ffF_{\ell,\ell'}$ and put $\EeE \df \{v \in \ffF_{\ell,1}\dd\ (v \circ u)[\ueT] = v[u[\ueT]]\}$.
We infer from (F2) and (F4)--(F5) that conditions (E0)--(E2) are satisfied. Thus,
$\EeE = \ffF_{\ell,1}$ and we are done.\par
We turn to (F9). Let $\uU \in \ffF_{\ell,\ell}$ be any function such that
\begin{equation*}
\uU(X_1,\ldots,X_{\ell}) =
(X_1 (I_{d(X_1)} - |X_1|)^{-1},\ldots,X_{\ell} (I_{d(X_{\ell})} - |X_{\ell}|)^{-1})
\end{equation*}
for any $(X_1,\ldots,X_{\ell}) \in \mmM^{(\ell)}$ with $\|X_j\| < 1$ for each $j$. Put $\ueS \df
\bB(\ueT)$. Then $\WwW = \WwW(\ueS)$. Below we shall think of the $\bB$-transform as of a function
in $\ffF_{\ell,\ell}$. Straightforward calculations show that $\uU(\bB(\ueX)) = \ueX$ for any $\ueX
\in \mmM^{(\ell)}$. It is easy to verify that $\bB[\ueT] = \ueS$ (use the uniqueness of the square
root of a nonnegative selfadjoint unbounded operator). Since $\ueS$ consists of bounded operators,
\PRO[s]{sep} and \PRO[]{affil} yield that
\begin{gather*}
\{f[\ueS]\dd\ f \in \ffF_{\ell,1}^{bd}\} = \WwW,\\
\{f[\ueS]\dd\ f \in \ZZz(\ffF_{\ell,1}^{bd})\} = \ZZz(\WwW),\\
\{f[\ueS]\dd\ f \in \ffF_{\ell,1}\} = \hat{\WwW},\\
\{f[\ueS]\dd\ f \in \ZZz(\ffF_{\ell,1})\} = \ZZz(\hat{\WwW}).
\end{gather*}
But (F6) implies that for any $f \in \ffF_{\ell,1}$, $f[\ueT] = (f \circ \uU)[\ueS]$ and thus (F9)
follows from the above formulas.
\end{proof}

\begin{proof}[Proof of \THM{main1}]
We leave it to the reader that all conclusions of the theorem follow from the results of this
section.
\end{proof}

Also the proof of the following result is skipped.

\begin{cor}{prop}
Let $f \in \ffF_{\ell,\ell'}$.
\begin{enumerate}[\upshape(A)]
\item If $\ueT$ is an \FTI{} $\ell$-tuple of operators in $\HHh$ and $U\dd \HHh \to \KKk$ is
 a unitary operator, then $f[U . \ueT] = U . f[\ueT]$.
\item If $\{\ueT^{(s)}\}_{s \in S}$ is an arbitrary collection of \FTI{} $\ell$-tuples, then
 $f[\bigoplus_{s \in S} \ueT^{(s)}] = \bigoplus_{s \in S} f[\ueT^{(s)}]$.
\item For any $\ueX \in \mmM^{(\ell)}$, $f[\ueX] = f(\ueX)$.
\end{enumerate}
\end{cor}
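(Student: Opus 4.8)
The plan is to obtain all three items from the uniqueness clause of \THM{main2} (which itself rests on \THM{F}): in each case I would exhibit a second unital $*$-homomorphism on $\ffF_{\ell,1}$ enjoying properties \textup{(F1)}, \textup{(F3)} and \textup{(F5)}, and then invoke uniqueness to identify it with $u \mapsto u[\,\cdot\,]$. First, using \textup{(F2)} I reduce to the case $\ell'=1$: writing $f \in \ffF_{\ell,\ell'}$ as the diagonal function of $f_k \df \pi^{(\ell')}_k \circ f \in \ffF_{\ell,1}$ ($k \leqsl \ell'$), each of (A)--(C) for $f$ follows coordinatewise from the same statement for the $f_k$, since forming tuples commutes both with coordinatewise unitary conjugation and with coordinatewise direct sums. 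So from now on fix $f \in \ffF_{\ell,1}$.

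For (A), note that $U . \ueT$ is \FTI{} by \LEM{ideal}(A) and $\WwW''(U . \ueT) = U \WwW U^{-1}$ for $\WwW \df \WwW''(\ueT)$ (pass to commutants in $\WwW'(U . \ueT) = U \WwW'(\ueT) U^{-1}$). I would consider $\Psi\dd \ffF_{\ell,1} \ni u \mapsto U\,u[\ueT]\,U^{-1}$. Conjugation by the unitary $U$ is a $*$-isomorphism of $\WwW$ onto $\WwW''(U . \ueT)$, hence --- the algebraic operations on affiliated operators being intrinsic to the ambient von Neumann algebra --- also of $\hat{\WwW}$ onto $\widehat{\WwW''(U . \ueT)}$; so $\Psi$ is a unital $*$-homomorphism into the latter, and $\Psi(\pi^{(\ell)}_j) = U T_j U^{-1} = \pi^{(\ell)}_j[U . \ueT]$ by \textup{(F1)}. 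As $\|UXU^{-1}\| = \|X\|$, $\Psi$ carries $\ffF_{\ell,1}^{bd}$ into bounded operators, giving \textup{(F3)}; and since $X \mapsto UXU^{-1}$ is continuous for the $*$-strong topology (because $\|U\xi\| = \|\xi\|$ for all $\xi$), \textup{(F5)} holds for $\Psi$. By the uniqueness assertion of \THM{main2}, $\Psi(u) = u[U . \ueT]$ for all $u$, which is (A).

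For (B), set $\ueT^{\oplus} \df \bigoplus_{s \in S} \ueT^{(s)}$, which is \FTI{} by \LEM{ideal}(C), and $\MmM \df \bigoplus_{s \in S} \WwW''(\ueT^{(s)})$; being a direct sum of finite type~$\tI$ algebras, $\MmM$ is finite and type~$\tI$, and since $\MmM' = \bigoplus_{s \in S} \WwW'(\ueT^{(s)})$ one checks at once that $\ueT^{\oplus}$ is affiliated with $\MmM$. I would consider $\Psi(u) \df \bigoplus_{s \in S} u[\ueT^{(s)}]$; by \textup{(F8)} each summand is affiliated with $\WwW''(\ueT^{(s)})$, so (by a routine verification, e.g.\ via the decomposition $u = \sum_n (u b_n) b_n$ used in the proof of \PRO{affil} together with \LEM{series}) $\Psi(u)$ is a well-defined closed densely defined operator affiliated with $\MmM$, and $u \mapsto \Psi(u)$ is a unital $*$-homomorphism $\ffF_{\ell,1} \to \hat{\MmM}$. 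Then \textup{(F1)} is immediate, \textup{(F3)} holds because $\|\Psi(u)\| = \sup_{s} \|u[\ueT^{(s)}]\| \leqsl \|u\|$ for bounded $u$, and \textup{(F5)} follows from the standard fact that a uniformly norm-bounded family of operators converging $*$-strongly in each coordinate produces a $*$-strongly convergent family of direct sums (split off a finite set of indices, as in an $\epsi/3$ argument). Uniqueness in \THM{main2} then gives $\Psi(u) = u[\ueT^{\oplus}]$, i.e.\ (B).

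For (C), a tuple $\ueX \in \mmM_n^{\ell} \subset \mmM^{(\ell)}$ is \FTI{} because $\WwW''(\ueX)$ is finite dimensional. The evaluation $\ffF_{\ell,1} \ni u \mapsto u(\ueX) \in \mmM_n$ is a unital $*$-homomorphism since the operations on $\ffF_{\ell,1}$ are pointwise; moreover, for any unitary $V \in \WwW'(\ueX)$ one has $V . \ueX = \ueX$, so compatibility of $u$ gives $u(\ueX) = u(V . \ueX) = V . u(\ueX)$, whence $u(\ueX)$ commutes with every unitary of $\WwW'(\ueX)$ and thus lies in $\WwW''(\ueX)$. Evaluation sends $\pi^{(\ell)}_j$ to $X_j$, is trivially bounded, and satisfies \textup{(F5)} because pointwise convergence $u_n \to u$ entails $u_n(\ueX) \to u(\ueX)$ and on a finite dimensional space norm convergence coincides with $*$-strong convergence; hence, once more by uniqueness in \THM{main2}, $u[\ueX] = u(\ueX)$. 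I do not expect any serious obstacle: the substance is entirely \THM{F}, and the only points needing a line of argument are the \textup{(F5)} verifications in (A) and (B) --- continuity of unitary conjugation, resp.\ of infinite direct sums, for the $*$-strong topology on norm-bounded sets --- together with the bookkeeping that the candidate maps land in the correct algebras of affiliated operators.
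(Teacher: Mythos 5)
The paper does not give a proof of this corollary (it says ``Also the proof of the following result is skipped''), so there is nothing to compare against verbatim; but your approach via the uniqueness clause of \THM{main2} is exactly the tool the author deploys repeatedly in Section~3 (e.g.\ in the proofs of \THM{nonsep} and \PRO{affil}), and is surely what is intended. Your argument is correct. In particular: the reduction to $\ell'=1$ through \textup{(F2)} is legitimate because \textup{(F2)} is precisely how the $\ell'$-tuple calculus is determined by the scalar one; in (A) the identification $\WwW''(U.\ueT) = U\,\WwW''(\ueT)\,U^{-1}$ and the $*$-strong continuity of unitary conjugation are exactly what is needed to verify \textup{(F3)} and \textup{(F5)} for $\Psi(u)=U\,u[\ueT]\,U^{-1}$; in (B) the $\ell^\infty$-direct sum $\MmM=\bigoplus_s \WwW''(\ueT^{(s)})$ is a finite type~$\tI$ von Neumann algebra whose commutant is the $\ell^\infty$-direct sum of the $\WwW'(\ueT^{(s)})$, so affiliation of $\ueT^{\oplus}$ and of $\Psi(u)=\bigoplus_s u[\ueT^{(s)}]$ is immediate, and the $\epsi/3$ splitting handles \textup{(F5)}; and in (C) the observation that compatibility forces $u(\ueX)$ into $\WwW''(\ueX)$, together with the coincidence of all topologies on $\mmM_n$, gives everything needed. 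The only place where a bit more bookkeeping could be spelled out is the verification that $\Psi$ in (B) is a $*$-homomorphism into $\hat{\MmM}$ for \emph{unbounded} $u$, but the indicated route through the central projections $b_n$ and \LEM{series} is exactly the argument used in \PRO{affil} and works without change.
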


\section{Spectral theorem and spectrum}

In this section we apply the functional calculus built in the previous section to propose a new
approach to so-called operator-valued spectra and some variations of the spectral theorem.\par
We begin with

\begin{pro}{In}
Let $\ueT$ be an \FTI{} $\ell$-tuple of operators. For each $n > 0$, denote by $j_n$ the function
in $\ffF_{\ell,1}^{bd}$ which vanishes at each irreducible $\ell$-tuple in $\mmM^{(\ell)} \setminus
\mmM_n^{\ell}$ and sends each irreducible $\ell$-tuple in $\mmM_n^{\ell}$ to $I_n$. Then $j_n[\ueT]$
is the greatest selfadjoint projection $Z \in \ZZz(\WwW''(\ueT))$ such that $Z \WwW''(\ueT) =
\{Z A\dd\ A \in \WwW''(\ueT)\}$ is type~$\tI_n$.
\end{pro}
\begin{proof}
Since $\WwW \df \WwW''(\ueT)$ is finite and type~$\tI$, there is a sequence $Z_1,Z_2,\ldots$
of mutually orthogonal projections in $\ZZz(\WwW)$ that sum up to the unit of $\WwW$ and are such
that $Z_n \WwW$ is type~$\tI_n$ (or $Z_n = 0$). Our task is to show that $j_n[\ueT] = Z_n$. Denoting
by $\ueT^{(n)}$ the restriction of $\ueT$ to the range of $Z_n$, we obtain that $\ueT =
\bigoplus_{n=1}^{\infty} \ueT^{(n)}$ and $\WwW''(\ueT^{(n)})$ is type~$\tI_n$. Since $u[\ueT] =
\bigoplus_{n=1}^{\infty} u[\ueT^{(n)}]$ for any $u \in \ffF_{\ell,1}$ (by \COR{prop}), we see that
it suffices to check that $j_n[\ueT^{(n)}]$ coincides with the unit of $\WwW''(\ueT^{(n)})$ (that
is, with $Z_n$). This reduces the issue to the case when $\WwW$ is type $\tI_N$ (and we only need
to verify that $j_N[\ueT]$ is the unit of $\WwW$). As in the proof of \THM{main2}, we conclude from
the results of \cite{pn4} that $\ueT = \bigoplus_{s \in S} \ueT^{(s)}$ where each of $\ueT^{(s)}$
consists of bounded operators acting in a separable Hilbert space (and $\WwW''(\ueT^{(s)})$ is
type~$\tI_N$). Now condition ($\Lambda$0) of \LEM{sep} shows that $j_N[\ueT^{(s)}] =
\jJ[\ueT^{(s)}]$ for any $s \in S$ (recall that $\jJ$ is the unit of $\ffF_{\ell,1}$). Thus, again
thanks to \COR{prop}, $j_N[\ueT] = \jJ[\ueT]$ and we are done.
\end{proof}

\begin{lem}{zero}
Let $\ueT$ be an \FTI{} $\ell$-tuple and $f$ be any function in $\ffF_{\ell,\ell'}$. Let $u \in
\ffF_{\ell,1}^{bd}$ be a \textup{(}unique\textup{)} function such that for any irreducible
$\ell$-tuple $\ueX \in \mmM^{(\ell)}$, $u(\ueX)$ is the orthogonal projection onto the range
of $\sum_{j=1}^{\ell'} \bigl(\pi^{(\ell')}_j(f(\ueX))\bigr)^* \bigl(\pi^{(\ell')}_j(f(\ueX))\bigr)$.
Then $f[\ueT] = (0,\ldots,0)$ iff $u[\ueT] = 0$.
\end{lem}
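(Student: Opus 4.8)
The plan is to realise $u$ as $g^{+}\cdot g$ for a suitable auxiliary function $g\in\ffF_{\ell,1}$, transport everything through the $*$-homomorphism $\ffF_{\ell,1}\ni v\mapsto v[\ueT]\in\hat{\WwW}$ (where $\WwW\df\WwW''(\ueT)$) supplied by \THM{main2}, and then reduce the only remaining point — a vanishing sum of positive, possibly unbounded operators in $\hat{\WwW}$ — to the bounded case. To begin I would put $f_j\df\pi^{(\ell')}_j\circ f\in\ffF_{\ell,1}$, so that $f[\ueT]=(f_1[\ueT],\ldots,f_{\ell'}[\ueT])$ by \textup{(F2)}, and $g\df\sum_{j=1}^{\ell'}f_j^{*}f_j\in\ffF_{\ell,1}$; then $u$ is precisely $\ueX\mapsto$ (orthogonal projection onto the range of $g(\ueX)$) on irreducible tuples. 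First I would check that $u\in\ffF_{\ell,1}^{bd}$ and is the unique such function: the map sending a matrix to the projection onto its range is constant on the Borel strata of fixed rank and continuous (even real-analytic) on each of them, hence Borel, and it manifestly intertwines unitary conjugation and respects $\oplus$; since range projections have norm $\leqsl1$ this gives $u\in\ffF_{\ell,1}^{bd}$, and uniqueness follows from \COR{ext} (a compatible function is determined by its values on a kernel, and a kernel consists of irreducible tuples).

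Next I would introduce the Moore--Penrose pseudoinverse $h\df g^{+}$, i.e.\ $h(\ueX)\df g(\ueX)^{+}$. Exactly as for $u$, the map $X\mapsto X^{+}$ is Borel (real-analytic on each fixed-rank stratum), intertwines unitary conjugation and respects direct sums, so $h\in\ffF_{\ell,1}$. The elementary identities $g(\ueX)^{+}g(\ueX)=$ (range projection of $g(\ueX)$) $=u(\ueX)$ and $u(\ueX)g(\ueX)=g(\ueX)$, valid because $g(\ueX)$ is positive semidefinite, hold at every irreducible $\ueX$, hence, by \COR{ext}, as identities $h\cdot g=u$ and $u\cdot g=g$ in $\ffF_{\ell,1}$. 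Applying the $*$-homomorphism $v\mapsto v[\ueT]$ (property \textup{(F4)}; values in $\hat{\WwW}$ by \textup{(F8)}) and using that the sum defining $g$ is finite, I obtain in $\hat{\WwW}$ the relations $g[\ueT]=\sum_{j=1}^{\ell'}(f_j[\ueT])^{*}\cdotaff f_j[\ueT]$, together with $h[\ueT]\cdotaff g[\ueT]=u[\ueT]$ and $u[\ueT]\cdotaff g[\ueT]=g[\ueT]$.

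These three identities settle one implication each. If $f[\ueT]=(0,\ldots,0)$ then every $f_j[\ueT]=0$, so $g[\ueT]=0$, whence $u[\ueT]=h[\ueT]\cdotaff g[\ueT]=h[\ueT]\cdotaff 0=0$. Conversely, if $u[\ueT]=0$ then $g[\ueT]=u[\ueT]\cdotaff g[\ueT]=0$, that is $\sum_{j}(f_j[\ueT])^{*}\cdotaff f_j[\ueT]=0$ in $\hat{\WwW}$. The remaining — and, I expect, the only genuinely delicate — step is to deduce from this that each $a_j\df f_j[\ueT]$ vanishes, the difficulty being that the $a_j$ need not be bounded. Here I would exploit the structure of $\hat{\WwW}$: by the result of \cite{pn4} used in the proof of \PRO{affil}, for each $j$ there is a sequence of mutually orthogonal projections in $\ZZz(\WwW)$ summing to the unit of $\WwW$ along which $a_j$ becomes bounded; taking a common refinement yields mutually orthogonal $Z_1,Z_2,\ldots\in\ZZz(\WwW)$ summing to $1$ with every $a_jZ_n$ bounded. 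Since the $Z_n$ are central, cutting $\sum_j a_j^{*}\cdotaff a_j=0$ by $Z_n$ gives the genuine bounded-operator identity $\sum_j(a_jZ_n)^{*}(a_jZ_n)=0$, hence $\sum_j\|a_jZ_n x\|^{2}=0$ for all $x$ and so $a_jZ_n=0$ for all $j,n$; by \LEM{series}, $a_j=\sum_n a_jZ_n=0$, i.e.\ $f[\ueT]=(0,\ldots,0)$.

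Thus the heart of the argument is the purely algebraic pair of identities $hg=u$, $ug=g$ together with the functional-calculus formalism of \THM{main2}; the points requiring care are the Borel measurability of the range-projection and pseudoinverse maps and, above all, the final descent to the bounded case. (An alternative, slightly longer route would reduce via \COR{prop}, the decomposition into separable reducing subspaces, and \LEM{sep} to the case where $\ueT$ is a tuple of multiplication operators $M_{\xi_j}$ on some $L^{2}(\Omega,\mu,\CCC^{m})$, where $h[\ueT]\cong M_{h\circ\psi}$ for every $h\in\ffF_{\ell,1}$ and one only has to observe that $M_v=0$ iff $v=0$ $\mu$-a.e.; I would nevertheless prefer the algebraic argument above as it avoids reduction theory.)
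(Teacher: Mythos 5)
Your proof is correct but takes a genuinely different route from the paper's. The paper works with the two-sided pointwise inequality $2^{-N}g(\ueX)\leqsl u(\ueX)\leqsl 2^{N}g(\ueX)$ (for $N$ depending on $\ueX$), builds a countable family of mutually disjoint bounded compatible functions $b_{n}$ that stratify the irreducibles by the dyadic size of the relevant ``condition number'' of $g(\ueX)$ so that $2^{-n}gb_{n}\leqsl ub_{n}\leqsl 2^{n}gb_{n}$ globally, and then compares the series representations $u[\ueT]=\sum(ub_{n})[\ueT]b_{n}[\ueT]$ and $g[\ueT]=\sum(gb_{n})[\ueT]b_{n}[\ueT]$ term by term. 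You instead replace this order-theoretic device by the Moore--Penrose pseudoinverse $h=g^{+}$: the pointwise identities $hg=u$ and $ug=g$, which hold on irreducibles and hence globally by \COR{ext}, become operator identities $h[\ueT]\cdotaff g[\ueT]=u[\ueT]$ and $u[\ueT]\cdotaff g[\ueT]=g[\ueT]$ via \textup{(F4)}, and both implications then follow once one descends to the bounded case along a central partition supplied by \cite{pn4}. Both arguments finish with a descent along mutually orthogonal central projections --- the paper's is $\{b_{n}[\ueT]\}$ induced by the dyadic stratification of $g$, yours is the structural partition of $\hat{\WwW}$ --- but the algebraic setup differs. Your route buys a very clean encoding of the relation between $g$ and its range projection (two one-line matrix identities, making the ``iff'' nearly symmetric), at the modest cost of checking that $X\mapsto X^{+}$ and the range-projection map are Borel and compatible; the paper's route avoids the pseudoinverse and leans directly on the order structure, at the cost of an explicit construction of the $b_{n}$. (One small slip: the range-projection map is not ``constant'' on the fixed-rank strata, only continuous there; the conclusion that it is Borel is still correct.)
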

\begin{proof}
Let $g \df \sum_{j=1}^{\ell'} \bigl(\pi^{(\ell')}_j \circ f\bigr)^* \cdot \bigl(\pi^{(\ell')}_j
\circ f\bigr)$. Since $0 \leqsl \bigl(\pi^{(\ell')}_j \circ f\bigr)^* \bigl(\pi^{(\ell')}_j \circ
f\bigr) \leqsl g$, it follows that $0 \leqsl \bigl(\pi^{(\ell')}_j[f[\ueT]]\bigr)^* \cdotaff
\bigl(\pi^{(\ell')}_j[f[\ueT]]\bigr) \leqsl g[\ueT]$ and, consequently, $f[\ueT] = 0$ \iaoi{}
$g[\ueT] = 0$. So, we only need to show that $g[\ueT] = 0$ iff $u[\ueT] = 0$. To this end, observe
that for any $\ueX \in \mmM^{(\ell)}$ there exists a positive integer $N > 0$ for which $2^{-N}
g(\ueX) \leqsl u(\ueX) \leqsl 2^N g(\ueX)$. One concludes that therefore there is a sequence $b_1,
b_2,\ldots \in \ffF_{\ell,1}^{bd}$ of mutually disjoint functions that sum up to the unit matrix
at each irreducible $\ell$-tuple and satisfy $2^{-n} g b_n \leqsl u b_n \leqsl 2^n g b_n$ (globally)
for any $n$. So, the assertion readily follows from the fact that $u[\ueT] = \sum_{n=1}^{\infty}
(u b_n)[\ueT] b_n[\ueT]$ and similarly $g[\ueT] = \sum_{n=1}^{\infty} (g b_n)[\ueT] b_n[\ueT]$.
\end{proof}

We would like to think of $u[\ueT]$ as of the integral $\int_{\mmM^{(\ell)}} u \dint{E}$ with
respect to some spectral measure $E$ (this shall be explained in more detail in the sequel). Under
such a thinking, it is a typical question of when $u[\ueT] = 0$ for a nonnegative function $u \in
\ffF_{\ell,1}$. The foregoing result reduces the above problem to functions that are selfadjoint
projections (as members of $C^*$-algebras).\par
\LEM{zero} is quite simple and intuitive. The following result is much more subtle.

\begin{pro}{zero}
For a function $f \in \ffF_{\ell,\ell'}$, let $v \in \ZZz(\ffF_{\ell,1}^{bd})$ be
a \textup{(}unique\textup{)} function such that for any irreducible $\ell$-tuple $\ueX \in
\mmM^{(\ell)}$, $v(\ueX) = 0$ if $f(\ueX) = (0,\ldots,0)$ and $v(\ueX) = I_{d(\ueX)}$ otherwise.
Then, for any \FTI{} $\ell$-tuple $\ueT$, $f[\ueT] = (0,\ldots,0)$ iff $v[\ueT] = 0$.
\end{pro}
\begin{proof}
Let $u \in \ffF_{\ell,1}^{bd}$ be as specified in \LEM{zero}, from which we infer that $f[\ueT] = 0$
iff $u[\ueT] = 0$. Observe that for any irreducible $\ueX \in \mmM^{(\ell)}$, $u(\ueT) = 0$
precisely when $v(\ueX) = 0$. One concludes that therefore
\begin{equation}\label{eqn:zero}
u(\ueX) = 0 \iff v(\ueX) = 0 \qquad (\ueX \in \mmM^{(\ell)}).
\end{equation}
Further, it follows from the results of \cite{pn4} that $\ueT = \bigoplus_{n=1}^{\infty} \ueT^{(n)}$
where each $\ueT^{(n)}$ consists of bounded operators (cf.\ the proof of \THM{main2}). So,
\COR{prop} implies that $u[\ueT] = 0$ iff $u[\ueT^{(n)}] = 0$ for all $n$ (and analogously for $v$
in place of $u$). This argument reduces the issue to the case when $\ueT$ is bounded. Then $\ueT =
\bigoplus_{s \in S} \ueT^{(s)}$ where each $\ueT^{(s)}$ acts on a separable Hilbert space.
So, arguing as before, we see that it suffices to prove the proposition for $\ell$-tuples $\ueT$
of bounded operators acting on a separable Hilbert space. In that case we apply \PRO{sep}. Let
a measure $\lambda$ be as specified there. It then follows from ($\Lambda$0') that $u[\ueT] = 0$
\iaoi{} $u = 0$ $\lambda$-almost everywhere (and the same for $v$ in place of $u$). So, a look
at \eqref{eqn:zero} finishes the proof.
\end{proof}

Noticing that selfadjoint projections in $\ZZz(\ffF_{\ell,1}^{bd})$ correspond to unitarily
invariant Borel sets of irreducible $\ell$-tuples of matrices, based on \PRO{zero}, we introduce

\begin{dfn}{meas}
Let $\Bb^{(\ell)}$ be the family of all Borel sets $\bbB$ of irreducible $\ell$-tuples of matrices
such that $U . \ueX \in \bbB$ whenever $\ueX \in \bbB$ and $U \in \uuU_{d(\ueX)}$ (every such a set
is called \textit{unitarily invariant}). $\Bb^{(\ell)}$ is a $\sigma$-algebra of subsets of the set
$\iiI^{(\ell)}$ of all irreducible $\ell$-tuples in $\mmM^{(\ell)}$. For any set $\bbB \in
\Bb^{(\ell)}$ there exists a unique function in $\ffF_{\ell,1}$, to be denoted by $\jJ_{\bbB}$, that
vanishes at each irreducible $\ell$-tuple in $\iiI^{(\ell)} \setminus \bbB$ and sends each member
of $\bbB$ to the unit matrix of a respective degree. For simplicity, we shall use $\iiI_n^{(\ell)}$
to denote the set $\iiI^{(\ell)} \cap \mmM_n^{\ell}$.\par
For any \FTI{} $\ell$-tuple $\ueT$ of operators, the \textit{spectral measure} of $\ueT$ is
the set function $E_{\ueT}\dd \Bb^{(\ell)} \to \ZZz(WwW(\ueT))$ given by $E_{\ueT}(\bbB) \df
\jJ_{\bbB}[\ueT]$ (that $E_{\ueT}(\bbB)$ belongs to $\ZZz(\WwW(\ueT))$ follows from property (F3)
in \THM{main2}).
\end{dfn}

We recall that an operator-valued set function $E\dd \Mm \to \BBb(\HHh)$ (where $\Mm$ is
a $\sigma$-algebra on a set $X$ and $\HHh$ is a Hilbert space) is said to be a \textit{spectral
measure} if
\begin{itemize}
\item $E(X)$ is the identity operator on $\HHh$; and
\item $E(\sigma)$ is an orthogonal projection for any $\sigma \in \Mm$; and
\item $E(\sigma \cap \sigma') = E(\sigma) E(\sigma')$ for all $\sigma, \sigma' \in \Mm$; and
\item whenever $\sigma_1,\sigma_2,\ldots$ are pairwise disjoint sets in $\Mm$, the series
 $\sum_{n=1}^{\infty} E(\sigma_n)$ converges to $E(\bigcup_{n=1}^{\infty} \sigma_n)$ in the strong
 operator topology.
\end{itemize}
The proofs of the next two result are left to the reader.

\begin{lem}{ET}
$\iiI^{(\ell)}$ is a locally compact Polish space and for any \FTI{} $\ell$-tuple $\ueT$, $E_{\ueT}$
is \textup{(}indeed\textup{)} a spectral measure. In particular, there exists the smallest set,
denoted by $\supp(E_{\ueT})$, among all sets $\bbB \in \Bb^{(\ell)}$ that are relatively closed
in $\iiI^{(\ell)}$ and satisfy $E_{\ueT}(\iiI^{(\ell)} \setminus \bbB) = 0$.
\end{lem}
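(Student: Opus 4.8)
The plan is to check the three assertions in turn: the topology of $\iiI^{(\ell)}$, the four defining properties of a spectral measure for $E_{\ueT}$, and then the existence of $\supp(E_{\ueT})$ as a formal consequence of the second part together with second countability.

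For the topology I would start from the fact, recorded in the proof of \PRO{kernel}, that $\iiI^{(\ell)}$ is an open subset of $\mmM^{(\ell)}$ (and $\sigma$-compact, by \eqref{eqn:irr}). Since $\mmM^{(\ell)} = \bigsqcup_n \mmM_n^{\ell}$ is Polish and is locally compact, being a countable topological disjoint union of the finite-dimensional --- hence locally compact --- spaces $\mmM_n^{\ell}$, and since an open subspace of a locally compact Hausdorff (resp.\ of a Polish) space is again locally compact (resp.\ Polish), it follows at once that $\iiI^{(\ell)}$ is a locally compact Polish space; in particular it is second countable, which is the property I will use afterwards.

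Next I would verify that $E_{\ueT}$ is a spectral measure by transporting algebraic identities among the functions $\jJ_{\bbB}$ through the unital $*$-homomorphism $\ffF_{\ell,1} \ni u \mapsto u[\ueT]$ furnished by \THM{main2}. Note each $\jJ_{\bbB}$ (for $\bbB \in \Bb^{(\ell)}$) takes only the values $0$ and unit matrices, so it lies in $\ffF_{\ell,1}^{bd}$ with $\|\jJ_{\bbB}\| \leqsl 1$. Using the decomposition \eqref{eqn:pd} of an arbitrary tuple into irreducible summands --- equivalently, the rigidity in \COR{ext}, by which a function in $\ffF_{\ell,1}$ is determined by its restriction to a kernel --- I would check the pointwise identities $\jJ_{\iiI^{(\ell)}} = \jJ$ (the unit of $\ffF_{\ell,1}$), $\jJ_{\bbB}^{*} = \jJ_{\bbB} = \jJ_{\bbB}^{2}$, and $\jJ_{\bbB \cap \bbB'} = \jJ_{\bbB} \cdot \jJ_{\bbB'}$. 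Applying $u \mapsto u[\ueT]$ then yields, respectively, that $E_{\ueT}(\iiI^{(\ell)}) = \jJ[\ueT]$ is the identity operator on $\HHh$ (the homomorphism is unital), that every $E_{\ueT}(\bbB)$ is a selfadjoint idempotent, i.e.\ an orthogonal projection, and that $E_{\ueT}(\bbB \cap \bbB') = E_{\ueT}(\bbB) E_{\ueT}(\bbB')$. For countable additivity I would take pairwise disjoint $\bbB_1, \bbB_2, \ldots \in \Bb^{(\ell)}$ and observe that the partial sums $\sum_{k=1}^{N} \jJ_{\bbB_k}$ lie in $\ffF_{\ell,1}^{bd}$, have norm $\leqsl 1$, and converge pointwise to $\jJ_{\bigcup_k \bbB_k}$, so that property (F5) of \THM{main2} gives $\sum_{k=1}^{N} E_{\ueT}(\bbB_k) \to E_{\ueT}(\bigcup_k \bbB_k)$ in the $*$-strong, hence in the strong, topology. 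This last step --- where condition (F5) is indispensable --- is really the only part with content; everything else is bookkeeping, and this is where I expect whatever difficulty there is to sit.

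Finally, for $\supp(E_{\ueT})$, I would let $\ggG$ be the union of all subsets $G \subset \iiI^{(\ell)}$ that are relatively open, unitarily invariant and satisfy $E_{\ueT}(G) = 0$; each such $G$, being open in the open set $\iiI^{(\ell)}$ of $\mmM^{(\ell)}$ and consisting of irreducible tuples, lies in $\Bb^{(\ell)}$, and $\ggG$ is again relatively open and unitarily invariant. Since $\iiI^{(\ell)}$ is second countable, hence hereditarily Lindel\"{o}f, we may write $\ggG = \bigcup_{k=1}^{\infty} G_k$ with $G_k$ from this family; passing to the pairwise disjoint sets $B_1 \df G_1$, $B_k \df G_k \setminus (G_1 \cup \dots \cup G_{k-1}) \in \Bb^{(\ell)}$ (which still cover $\ggG$) and using that $E_{\ueT}(B_k) = E_{\ueT}(B_k) E_{\ueT}(G_k) = 0$, countable additivity gives $E_{\ueT}(\ggG) = \sum_k E_{\ueT}(B_k) = 0$. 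Then $\supp(E_{\ueT}) \df \iiI^{(\ell)} \setminus \ggG$ is relatively closed, lies in $\Bb^{(\ell)}$, satisfies $E_{\ueT}(\iiI^{(\ell)} \setminus \supp(E_{\ueT})) = E_{\ueT}(\ggG) = 0$, and is contained in every relatively closed $\bbB \in \Bb^{(\ell)}$ with $E_{\ueT}(\iiI^{(\ell)} \setminus \bbB) = 0$, since for such $\bbB$ the set $\iiI^{(\ell)} \setminus \bbB$ is relatively open, unitarily invariant and $E_{\ueT}$-null, hence contained in $\ggG$. This exhibits $\supp(E_{\ueT})$ as the required smallest set, and no step beyond the use of (F5) noted above presents a genuine obstacle.
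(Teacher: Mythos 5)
The paper explicitly leaves this lemma without proof (``the proofs of the next two result are left to the reader''), so there is no argument to compare against; your proof is correct and supplies exactly what is needed. The decisive point you correctly isolate is the use of (F5) for countable additivity, and the only places where one might ask for a word more are (a) that the pointwise identities among the $\jJ_{\bbB}$ need only be checked on irreducible tuples (and hence on a kernel), uniqueness in $\ffF_{\ell,1}$ then extending them everywhere via \COR{ext} and \eqref{eqn:pd}, and (b) that each $B_k = G_k \setminus (G_1 \cup \dots \cup G_{k-1})$ remains unitarily invariant because finite unions and relative complements of unitarily invariant sets are unitarily invariant --- both of which you in effect already appeal to.
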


\begin{cor}{equal}
Let $\ueT$ be an \FTI{} $\ell$-tuple. For any two functions $u$ and $v$ in $\ffF_{\ell,\ell'}$,
the set $D(u,v) \df \{\ueX \in \iiI^{(\ell)}\dd\ u(\ueX) \neq v(\ueX)\}$ belongs to $\Bb^{(\ell)}$,
and
\begin{equation*}
u[\ueT] = v[\ueT] \iff E_{\ueT}(D(u,v)) = 0.
\end{equation*}
\end{cor}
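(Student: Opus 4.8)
The plan is to deduce the corollary directly from \PRO{zero}, applied to the difference $f \df u - v \in \ffF_{\ell,\ell'}$ (subtraction taken in the $*$-algebra $\ffF_{\ell,\ell'}$, so that $f(\ueX) = u(\ueX) - v(\ueX)$ for every $\ueX$).

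First I would settle the measurability claim. Observe that $D(u,v)$ is precisely the set of $\ueX \in \iiI^{(\ell)}$ with $f(\ueX) \neq (0,\ldots,0)$. Since $f$ is a Borel map $\mmM^{(\ell)} \to \mmM^{(\ell')}$, the set $\{\ueX : f(\ueX) \neq (0,\ldots,0)\}$ is Borel in $\mmM^{(\ell)}$; intersecting with $\iiI^{(\ell)}$, which is open (hence Borel, indeed $\sigma$-compact, as noted in the proof of \PRO{kernel}), shows that $D(u,v)$ is a Borel set of irreducible $\ell$-tuples. Unitary invariance follows from compatibility of $u$ and $v$: for $\ueX \in \iiI^{(\ell)}$ and $U \in \uuU_{d(\ueX)}$ one has $u(U.\ueX) = U.u(\ueX)$ and $v(U.\ueX) = U.v(\ueX)$, and since $U.(\cdot)$ acts bijectively on each $\mmM_n^{\ell'}$, $u(\ueX) \neq v(\ueX)$ forces $u(U.\ueX) \neq v(U.\ueX)$. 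Hence $D(u,v) \in \Bb^{(\ell)}$, so that $\jJ_{D(u,v)}$ is defined via \DEF{meas}.

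Next I would handle the equivalence. Because $g \mapsto g[\ueT]$ is, coordinatewise, a $*$-homomorphism (see (F2) and (F4) in \THM{main2}) and $\hat{\WwW}$ is a $*$-algebra, one has $u[\ueT] = v[\ueT]$ if and only if $f[\ueT] = (0,\ldots,0)$. Let $w \in \ZZz(\ffF_{\ell,1}^{bd})$ be the function attached to $f$ by \PRO{zero}, i.e.\ $w(\ueX) = 0$ when $f(\ueX) = (0,\ldots,0)$ and $w(\ueX) = I_{d(\ueX)}$ otherwise, for irreducible $\ueX$. By the description of $D(u,v)$ just obtained, $w$ vanishes on $\iiI^{(\ell)} \setminus D(u,v)$ and takes the value $I_{d(\ueX)}$ on $D(u,v)$; by the uniqueness clause in \DEF{meas} this means $w = \jJ_{D(u,v)}$, whence $w[\ueT] = E_{\ueT}(D(u,v))$. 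Now \PRO{zero} gives $f[\ueT] = (0,\ldots,0)$ if and only if $w[\ueT] = 0$, and chaining this with the previous equivalences yields $u[\ueT] = v[\ueT]$ if and only if $E_{\ueT}(D(u,v)) = 0$.

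There is no serious obstacle: the corollary is essentially a restatement of \PRO{zero} in the vocabulary of the spectral measure $E_{\ueT}$. The only care needed is the routine bookkeeping translating equality of $\ell'$-tuples into vanishing of $f[\ueT]$, and the observation that the auxiliary function produced by \PRO{zero} is literally $\jJ_{D(u,v)}$.
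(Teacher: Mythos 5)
Your argument is correct and is precisely the deduction the paper has in mind by stating the corollary immediately after \PRO{zero} and \DEF{meas} and leaving the proof to the reader: apply \PRO{zero} to $f = u - v$, note that the auxiliary central projection it produces is literally $\jJ_{D(u,v)}$, and translate. The measurability/unitary-invariance check for $D(u,v)$ and the reduction of $u[\ueT]=v[\ueT]$ to $f[\ueT]=0$ via (F2), (F4) and the ring structure of $\hat{\WwW}$ are exactly the routine steps needed, and you have carried them out cleanly.
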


\begin{cor}{inverse}
Let $\ueT$ be an \FTI{} $\ell$-tuple of operators in $\HHh$ and $u \in \ffF_{\ell,1}$. If $u[\ueT]$
is one-to-one, then the range of $u[\ueT]$ is dense in $\HHh$ and there is $v \in \ffF_{\ell,1}$ for
which $v[\ueT] = (u[\ueT])^{-1}$.
\end{cor}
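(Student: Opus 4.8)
The plan is to show that $u$ is ``pointwise invertible off a unitarily invariant Borel set'', that injectivity of $u[\ueT]$ forces the spectral measure $E_{\ueT}$ to be concentrated on that set, and then to read off the inverse. Put $\WwW\df\WwW''(\ueT)$ (finite and type~$\tI$, with $\ueT$ affiliated, so $u[\ueT]\in\hat{\WwW}$ by \THM{main2}), and
\[
\bbB\df\{\ueX\in\iiI^{(\ell)}\dd u(\ueX)\text{ is an invertible matrix}\}.
\]
Because $u$ is Borel and compatible, $\bbB$ is a unitarily invariant Borel set, i.e.\ $\bbB\in\Bb^{(\ell)}$, so the function $\jJ_{\bbB}\in\ffF_{\ell,1}$ of \DEF{meas} is available and $\jJ_{\bbB}[\ueT]=E_{\ueT}(\bbB)$. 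The corollary will follow once I have a $v\in\ffF_{\ell,1}$ with $u\cdot v=v\cdot u=\jJ_{\bbB}$ and once I know that $u[\ueT]$ injective implies $E_{\ueT}(\bbB)=\id_{\HHh}$: indeed, (F4) of \THM{main2} then gives $u[\ueT]\cdotaff v[\ueT]=v[\ueT]\cdotaff u[\ueT]=\jJ_{\bbB}[\ueT]=\id_{\HHh}$ in $\hat{\WwW}$, and a two-sided inverse in $\hat{\WwW}$ of an injective operator is the operator inverse.

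To produce $v$ and an auxiliary $w$ I would fix a $\sigma$-compact kernel $\kkK$ of $\mmM^{(\ell)}$ (\PRO{kernel}) and define, for $\ueX\in\kkK$, $v(\ueX)\df u(\ueX)^{-1}$ if $u(\ueX)$ is invertible and $v(\ueX)\df 0$ otherwise, and $w(\ueX)\df$ the orthogonal projection onto $\ker u(\ueX)$. Each restriction is Borel and degree-preserving: $\{\ueX\in\kkK\dd u(\ueX)\text{ invertible}\}$ is Borel, matrix inversion is continuous there, and $A\mapsto\chi_{\{0\}}(A^{*}A)$ is a pointwise limit of the continuous maps $A\mapsto\max(0,I-nA^{*}A)$. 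By \COR{ext} these extend (uniquely) to $v\in\ffF_{\ell,1}$ and $w\in\ffF_{\ell,1}^{bd}$, with $\|w\|\leqsl1$ and $w=w^{*}=w^{2}$. Comparing values on $\kkK$ (functions in $\ffF_{\ell,1}$ that agree on $\kkK$ coincide, by the argument of \COR{ext}) gives, in $\ffF_{\ell,1}$,
\[
u\cdot w=0,\qquad u\cdot v=v\cdot u=\jJ_{\bbB},\qquad\text{and}\qquad w(\ueX)=0\iff\ueX\in\bbB\quad(\ueX\in\iiI^{(\ell)}).
\]

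The core of the argument is the chain: $u[\ueT]$ injective $\Longrightarrow w[\ueT]=0\Longrightarrow E_{\ueT}(\bbB)=\id_{\HHh}$. For the first step, (F4) applied to $u\cdot w=0$ gives $u[\ueT]\cdotaff w[\ueT]=0$; taking adjoints in $\hat{\WwW}$ and using $w=w^{*}$ yields $w[\ueT]\cdotaff u[\ueT]^{*}=0$, which, since $w[\ueT]$ is bounded, is just the operator identity $w[\ueT]\bigl(u[\ueT]^{*}y\bigr)=0$ for all $y\in\operatorname{dom}(u[\ueT]^{*})$. Hence $\overline{\operatorname{ran}(u[\ueT]^{*})}\subseteq\ker(w[\ueT])$, so $\operatorname{ran}(w[\ueT])=(\ker w[\ueT])^{\perp}\subseteq\bigl(\overline{\operatorname{ran}(u[\ueT]^{*})}\bigr)^{\perp}=\ker(u[\ueT])$; injectivity then forces $\operatorname{ran}(w[\ueT])=0$, i.e.\ $w[\ueT]=0$. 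For the second step, the central function attached to $f\df w$ in \PRO{zero} is precisely $\jJ-\jJ_{\bbB}$, so \PRO{zero} gives $(\jJ-\jJ_{\bbB})[\ueT]=0$, that is $E_{\ueT}(\bbB)=\jJ_{\bbB}[\ueT]=\id_{\HHh}$.

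It remains to conclude from $u[\ueT]\cdotaff v[\ueT]=v[\ueT]\cdotaff u[\ueT]=\id_{\HHh}$ that the range of $u[\ueT]$ is dense and $v[\ueT]=(u[\ueT])^{-1}$. From $\overline{u[\ueT]\circ v[\ueT]}=\id_{\HHh}$, every $x$ in the (dense) domain of $u[\ueT]\circ v[\ueT]$ satisfies $x=u[\ueT]\bigl(v[\ueT]x\bigr)$, so $\operatorname{ran}(u[\ueT])$ is dense; since $u[\ueT]$ is injective with dense range, $(u[\ueT])^{-1}$ is a closed densely defined operator agreeing with $v[\ueT]$ on that core, and the two closed operators coincide (for the reverse inclusion one uses $v[\ueT]\cdotaff u[\ueT]=\id_{\HHh}$ symmetrically). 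I expect the genuinely delicate book-keeping to be exactly this last translation between the ring identities in $\hat{\WwW}$ and the operator-theoretic statements about $\ker$, $\operatorname{ran}$ and $(u[\ueT])^{-1}$ (relying on the Murray--von Neumann structure of $\hat{\WwW}$ that is taken for granted in the paper), together with the measurability checks behind the invocation of \COR{ext}; the conceptual content, namely $u[\ueT]$ injective $\iff E_{\ueT}(\bbB)=\id_{\HHh}$, is then almost immediate from \PRO{zero}.
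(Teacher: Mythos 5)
Your proof is correct and reaches the same ring-theoretic endgame as the paper (uniqueness of inverses in $\hat{\WwW}$), but the path through the two delicate steps is genuinely different. Where the paper proves $E_{\ueT}(\zzZ)=0$ (equivalently $E_{\ueT}(\bbB)=\id_{\HHh}$) by contradiction --- assuming $E_{\ueT}(\zzZ)\neq0$, running the multifunction $\ueX\mapsto\ker u(\ueX)$ through the Effros--Borel structure to extract a Borel selection $g$ of kernel vectors, building from $g$ a rank-one projection $w$, and then deriving a contradiction from $uw=0$ and injectivity --- you instead construct the \emph{full} kernel projection $w(\ueX)=\chi_{\{0\}}(u(\ueX)^*u(\ueX))$ directly as a Borel function on the kernel $\kkK$ (a pointwise limit of the continuous maps $A\mapsto\max(0,I-nA^*A)$), extend it by \COR{ext}, prove $w[\ueT]=0$ by the adjoint argument, and then feed $w$ into \PRO{zero}. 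This replaces the paper's measurable-selection machinery with a single explicit Borel formula and turns the contradiction argument into a direct one; the price is that you must observe that $w$ itself (not just a rank-one piece of it) is Borel and extends compatibly, which you address. You also derive density of $\operatorname{ran}(u[\ueT])$ \emph{after} the ring identity, from $u[\ueT]\cdotaff v[\ueT]=\id_{\HHh}$, while the paper proves it first via the polar decomposition $S=Q|S|$ together with the finiteness of $\WwW$ (an isometry in a finite von Neumann algebra is unitary, and $|S|$ is injective selfadjoint, hence has dense range). Both derivations are valid; the paper's polar-decomposition argument is self-contained, whereas yours leans on the Murray--von Neumann fact that the unclosed product $u[\ueT]\circ v[\ueT]$ has dense domain, which you flag and which the paper also implicitly assumes throughout when working in $\hat{\WwW}$. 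One small point worth tightening in the last paragraph: rather than arguing that $v[\ueT]$ and $(u[\ueT])^{-1}$ agree on a common core and then invoking closedness, it is cleaner to note (as the paper does) that once $\operatorname{ran}(u[\ueT])$ is dense, $(u[\ueT])^{-1}$ is itself a two-sided inverse of $u[\ueT]$ in the unital ring $\hat{\WwW}$, and inverses in a ring are unique, so $v[\ueT]=(u[\ueT])^{-1}$ immediately.
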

\begin{proof}
For simplicity, we put $S \df u[\ueT]$ and $\WwW \df \WwW''(S)$. Let $S = Q |S|$ (where $|S| =
(S^* S)^{1/2}$) be the polar decomposition of $S$ (so, $Q$ is a partial isometry whose kernel
coincides with the kernel of $S$). Since $S \in \hat{\WwW}$, we conclude that $Q \in \WwW$. So,
$Q$ is a unitary operator, being an isometry in a finite von Neumann algebra $\WwW$. Since $|S|$ is
selfadjoint and one-to-one, the range of $|S|$ is dense in $\HHh$. Thus, so is the range of $S$.
It is now an easy observation that $S^{-1}$ is affiliated with $\WwW$. We shall prove that $S^{-1}$
is the value of the functional calculus for $\ueT$.\par
Put $\zzZ \df \{\ueX \in \iiI^{(\ell)}\dd\ \det(u(\ueX)) = 0\}$. Observe that $\zzZ \in
\Bb^{(\ell)}$. We claim that $E_{\ueT}(\zzZ) = 0$. Suppose, on the contrary, that $E_{\ueT}(\zzZ)$
is nonzero. Then there in $N > 0$ such that
\begin{equation}\label{eqn:non0}
E_{\ueT}(\zzZ \cap \mmM_N^{\ell}) \neq 0.
\end{equation}
Consider a multifunction $\Psi$ on $\ddD \df \iiI^{(\ell)} \cap \mmM_N^{\ell}$ which assigns to each
$\ell$-tuple $\ueX \in \ddD$ the kernel of the matrix $u(\ueX)$. So, $\Psi(\ueX) \neq \{0\}$ \iaoi{}
$\ueX \in \zzZ$. Equipping the set of all linear subspaces of $\CCC^N$ with the Effros-Borel
structure (see \cite{ef1,ef2} or \S6 in Chapter~V in \cite{ta1} and Appendix there), we conclude
that $\Psi$ is measurable (this is a kind of folklore; it may also be simply deduced e.g.\ from
a combination of Proposition~2.4 in \cite{ern} and Corollary~A.18 in \cite{ta1}). So, it follows
from Effros' theory that there exist measurable functions $h_1,h_2,\ldots\dd \ddD \to \CCC^N$ such
that the set $\{h_n(\ueX)\dd\ n > 0\}$ is a dense subset of $\Psi(\ueX)$ for each $\ueX \in \ddD$
(to convince oneself of that, consult e.g.\ subsection~A.16 of Appendix in \cite{ta2}). We shall now
modify $h_n$ to obtain unitarily invariant functions with analogous properties. To this end, we fix
a Borel kernel $\kkK$ of $\mmM^{(\ell)}$. Let $\tau\dd \ddD \to \kkK \cap \mmM_N^{\ell}$ be
the function that assigns to each $\ueX \in \ddD$ the unique point of $\{U . \ueX\dd\ U \in \uuU_N\}
\cap \kkK$. We leave the proof that $\tau$ is Borel as an exercise. Now let $g_n \df h_n \circ
\tau$. We see that $g_n(U . \ueX) = g_n(\ueX)$ for any $\ueX \in \ddD$ and $U \in \uuU_N$. What is
more, $\{g_n(\ueX)\dd\ n > 0\} = \{h_n(\ueX)\dd\ n > 0\}$ for $\ueX \in \ddD \cap \kkK$. So, since
$\Psi(U . \ueX) = \Psi(\ueX)$ for all $\ueX \in \ddD$ and $U \in \uuU_N$, we conclude that
\begin{itemize}
\item[(d)] $\{g_n(\ueX)\dd\ n > 0\}$ is a dense subset of $\Psi(\ueX)$ for each $\ueX \in \ddD$.
\end{itemize}
But now, the sets $\{\ueX \in \ddD\dd\ g_n(\ueX) \neq 0\}$ belong to $\Bb^{(\ell)}$. Hence,
it follows from \eqref{eqn:non0} and (d) that there is $g \in \{g_n\dd\ n > 0\}$ with
\begin{equation}\label{eqn:Zg}
E_{\ueT}(\zzZ_g) \neq 0
\end{equation}
where $\zzZ_g \df \{\ueX \in \ddD\dd\ g(\ueX) \neq 0\}$. Let $w_0\dd \kkK \to \mmM$ be defined
as follows: if $\ueX \in \kkK \cap \zzZ_g$, $w_0(\ueX)$ is the matrix which corresponds
(in the canonical basis of $\CCC^N$) to a linear operator
\begin{equation*}
\CCC^N \ni \xi \mapsto \frac{\scalar{\xi}{g(\ueX)}}{\scalar{g(\ueX)}{g(\ueX)}} g(\ueX) \in \CCC^N
\end{equation*}
(where $\scalarr$ denotes the standard inner product in $\CCC^N$), and $w_0(\ueX) = 0 \in
\mmM_{d(\ueX)}$ for any $\ueX \in \kkK \setminus \zzZ_g$. Note that $u w_0$ vanishes at each point
of $\kkK$. Finally, let $w \in \ffF_{\ell,1}$ be a unique extension of $w_0$, guaranteed
by \COR{ext}. Then $u w$ is the zero function in $\ffF_{\ell,1}$. So, $u[\ueT] w[\ueT] = 0$. Since
$u[\ueT]$ is one-to-one, we see that $w[\ueT] = $. But $\zzZ_g \subset \{\ueX \in \mmM^{(\ell)}\dd\
w(\ueX) \neq 0\}$ and therefore $w[\ueT]$ is nonzero, by \eqref{eqn:Zg} and \COR{equal}. This
contradiction shows that indeed $E_{\ueT}(\zzZ) = 0$.\par
Now let $v$ be a unique function in $\ffF_{\ell,1}$ such that $v(\ueX) = (u(\ueX))^{-1}$ for $\ueX
\in \iiI^{(\ell)} \setminus \zzZ$ and $v(\ueX)$ vanishes at each point of $\zzZ$. We then have
$E_{\ueT}(D(uv,\jJ)) = E_{\ueT}(D(vu,\jJ)) = 0$. Consequently, \COR{equal} implies that both
$u[\ueT] \cdotaff v[\ueT]$ and $v[\ueT] \cdotaff u[\ueT]$ coincide with the identity operator
on $\HHh$. This means that $v[\ueT]$ is the inverse of $u[\ueT]$ as an element of the unital ring
$\ffF_{\ell,1}$. But so is $S^{-1}$ and therefore $v[\ueT] = S^{-1}$.
\end{proof}

\begin{dfn}{spectrum}
For any \FTI{} $\ell$-tuple $\ueT$, the set $\supp(E_{\ueT})$ is called the \textit{principal
spectrum} of $\ueT$.
\end{dfn}

Operator-valued spectra of arbitrary bounded Hilbert space operators were studied by Ernest
\cite{ern}, Hadwin \cite{ha1,ha2} and others (see also, e.g., \cite{kkl} and \cite{lee}).\par
Our next aim is to show that the functional calculus for \FTI{} tuples, as for normal operators, may
be obtained as an effect of integration (with respect to certain operator-valued measures). It is
worth noticing here that the way we shall do this will be ambiguous, that is, it will depend
on a kernel (of $\mmM^{(\ell)}$) we shall choose.\par
Since the $\sigma$-algebra $\Bb^{(\ell)}$ consists only of unitarily invariant sets, there is
no reasonable way to define the integral of functions in $\ffF_{\ell,1}$ with respect to spectral
measures $E_{\ueT}$ (because at the starting point, we only know how to integrate the functions
$\jJ_{\bbB}$, introduced in \DEF{meas}, which belong to $\ZZz(\ffF_{\ell,1})$, and there is
no reasonable way to approximate functions from $\ffF_{\ell,1}$ by functions from
$\ZZz(\ffF_{\ell,1})$). We shall overcome these difficulties with the aid of the next result. For
simplicity, for any Borel subset $\bbB$ of a Borel kernel of $\mmM^{(\ell)}$, we shall use
$\uuU . \bbB$ to denote the set $\{U . \ueX\dd\ \ueX \in \bbB,\ U \in \uuU_{d(\ueX)}\}$. The proof
of \LEM{ext-kernel} shows that $\uuU . \bbB \in \Bb^{(\ell)}$ for any such a set $\bbB$.\par
Everywhere below, we use $\mmM^{(*)}$ to denote the $C^*$-algebra product of all $\mmM_n$. That is,
$\mmM^{(*)}$ consists of all sequences $\xXX = (X_n)_{n=1}^{\infty}$ with $X_n \in \mmM_n$ (for any
$n$) and $(\|\xXX\| \df\,) \sup_{n\geqsl1} \|X_n\| < \infty$. Each matrix $X \in \mmM_n$ shall be
identified with the sequence in $\mmM^{(*)}$ whose $n$th entry is $X$ and all other entries are
zero. One readily checks that under such an identification $\mmM$ is a topological subspace
of $\mmM^{(*)}$ (that is, the topology of $\mmM$ coincides with that inherited from $\mmM^{(*)}$).
The unit of $\mmM^{(*)}$ shall be denoted by $\eEE$.

\begin{lem}{ext+repr}
Let $\kkK$ be a Borel kernel of $\mmM^{(\ell)}$ and $\ueT$ be an \FTI{} $\ell$-tuple of operators.
\begin{enumerate}[\upshape(A)]
\item The set function $E_{\ueT}^{\kkK}\dd \Bb(\mmM^{(\ell)}) \to \ZZz(\WwW(\ueT))$ given by
 \begin{equation*}
 E_{\ueT}^{\kkK}(\bbB) \df E_{\ueT}(\uuU . (\bbB \cap \kkK)) \qquad (\bbB \in \Bb(\mmM^{(\ell)}))
 \end{equation*}
 is a spectral measure that extends $E_{\ueT}$.
\item The function $\pi_{\kkK}\dd \mmM^{(*)} \to \WwW(\ueT)$ given by $\pi_{\kkK}(\xXX) \df
 \hat{c}_{\xXX}[\ueT]$ is a unital $*$-homomorphism where $\hat{c}_{\xXX}$, for $\xXX =
 (X_n)_{n=1}^{\infty}$, denotes a unique function in $\ffF_{\ell,1}^{bd}$ that is constantly equal
 to $X_n$ on $\kkK \cap \mmM_n^{\ell}$ for any $n$.
\end{enumerate}
\end{lem}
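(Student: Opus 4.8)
The plan is to derive both assertions from the functional calculus of \THM{main2}, used through the following already-available facts: on $\ffF_{\ell,1}^{bd}$ the map $u \mapsto u[\ueT]$ is a unital $*$-homomorphism with values in $\WwW''(\ueT) = \WwW(\ueT)$ (properties (F3), (F4), (F8)), it sends pointwise limits of uniformly bounded sequences to $*$-strong limits (property (F5)), and any Borel function on a kernel obeying \eqref{eqn:compat} has a \emph{unique} extension in $\ffF_{\ell,1}$, bounded iff the original function is (\LEM{ext} together with (ext1)). The only combinatorial input is the behaviour of $\aaA \mapsto \uuU . \aaA$ on subsets $\aaA$ of $\kkK$: since $\kkK$ is a kernel, each irreducible tuple is unitarily equivalent to exactly one element of $\kkK$, whence $\uuU . (\aaA_1 \cap \aaA_2) = (\uuU . \aaA_1) \cap (\uuU . \aaA_2)$, the sets $\uuU . \aaA_1$ and $\uuU . \aaA_2$ are disjoint whenever $\aaA_1, \aaA_2$ are, $\uuU . \kkK = \iiI^{(\ell)}$, and $\uuU . (\bbB \cap \kkK) = \bbB$ for a unitarily invariant $\bbB$; that each $\uuU . (\bbB \cap \kkK)$ belongs to $\Bb^{(\ell)}$ (so $E_{\ueT}^{\kkK}$ is well defined with values in $\ZZz(\WwW(\ueT))$) is exactly what the proof of \LEM{ext-kernel} gives.

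For (A) I would verify the four axioms of a spectral measure in turn. Since $\uuU . (\mmM^{(\ell)} \cap \kkK) = \iiI^{(\ell)}$ and $\jJ_{\iiI^{(\ell)}} = \jJ$, one gets $E_{\ueT}^{\kkK}(\mmM^{(\ell)}) = \jJ[\ueT] = I_{\HHh}$. For any $\sigma \in \Bb^{(\ell)}$ the function $\jJ_{\sigma}$ is a self-adjoint idempotent (it takes only the values $0$ and $I_{d(\cdot)}$), so $E_{\ueT}^{\kkK}(\bbB) = \jJ_{\uuU . (\bbB \cap \kkK)}[\ueT]$ is an orthogonal projection, lying in $\ZZz(\WwW(\ueT))$ because $E_{\ueT}$ does (\DEF{meas}). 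Multiplicativity $E_{\ueT}^{\kkK}(\bbB_1 \cap \bbB_2) = E_{\ueT}^{\kkK}(\bbB_1) E_{\ueT}^{\kkK}(\bbB_2)$ follows from $\jJ_{\sigma_1 \cap \sigma_2} = \jJ_{\sigma_1} \jJ_{\sigma_2}$ and the intersection identity above applied to $\aaA_i = \bbB_i \cap \kkK$. For $\sigma$-additivity, if the $\bbB_n$ are pairwise disjoint Borel sets then the $\jJ_{\uuU . (\bbB_n \cap \kkK)}$ are mutually disjoint, their partial sums are bounded by $\jJ$ and converge pointwise to $\jJ_{\uuU . ((\bigcup_n \bbB_n) \cap \kkK)}$, so property (F5) gives convergence of $\sum_n E_{\ueT}^{\kkK}(\bbB_n)$ to $E_{\ueT}^{\kkK}(\bigcup_n \bbB_n)$ in the $*$-strong, hence strong, operator topology. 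Finally, for $\bbB \in \Bb^{(\ell)} \subset \Bb(\mmM^{(\ell)})$ unitary invariance gives $\uuU . (\bbB \cap \kkK) = \bbB$, so $E_{\ueT}^{\kkK}(\bbB) = E_{\ueT}(\bbB)$, i.e.\ $E_{\ueT}^{\kkK}$ extends $E_{\ueT}$.

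For (B), fix $\xXX = (X_n)_{n=1}^{\infty} \in \mmM^{(*)}$ and let $c_{\xXX}\dd \kkK \to \mmM$ be the function constantly equal to $X_n$ on $\kkK \cap \mmM_n^{\ell}$, so that $\hat{c}_{\xXX}$ is its extension. Then $c_{\xXX}$ is Borel, satisfies \eqref{eqn:compat}, and has $\sup$-norm $\|\xXX\|$; by \LEM{ext} and (ext1) it extends uniquely to $\hat{c}_{\xXX} \in \ffF_{\ell,1}^{bd}$, and $\pi_{\kkK}(\xXX) = \hat{c}_{\xXX}[\ueT]$ is a bounded operator in $\WwW(\ueT)$ by (F3) and (F8). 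Because the algebraic operations in $\ffF_{\ell,1}^{bd}$, in $\mmM^{(*)}$ and in each $\mmM_n$ are coordinatewise, for $\xXX, \yYY \in \mmM^{(*)}$ and $\alpha \in \CCC$ the functions $\hat{c}_{\xXX} + \hat{c}_{\yYY}$, $\alpha \hat{c}_{\xXX}$, $\hat{c}_{\xXX} \hat{c}_{\yYY}$ and $(\hat{c}_{\xXX})^*$ all belong to $\ffF_{\ell,1}^{bd}$ and restrict on $\kkK$ to $c_{\xXX+\yYY}$, $c_{\alpha\xXX}$, $c_{\xXX\yYY}$ and $c_{\xXX^*}$; the uniqueness clause of \LEM{ext} then identifies them with $\hat{c}_{\xXX+\yYY}$, $\hat{c}_{\alpha\xXX}$, $\hat{c}_{\xXX\yYY}$ and $\hat{c}_{\xXX^*}$, and applying the $*$-homomorphism $u \mapsto u[\ueT]$ shows $\pi_{\kkK}$ respects all four operations. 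Since $c_{\eEE} = \jJ\bigr|_{\kkK}$ forces $\hat{c}_{\eEE} = \jJ$, we get $\pi_{\kkK}(\eEE) = \jJ[\ueT] = I_{\HHh}$; hence $\pi_{\kkK}$ is a unital $*$-homomorphism (automatically contractive, since $\|\hat{c}_{\xXX}\| = \|\xXX\|$).

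I do not expect a deep obstacle: both parts amount to a disciplined transfer of facts already in hand. The points demanding the most care are the two well-definedness checks --- that $\uuU . (\bbB \cap \kkK) \in \Bb^{(\ell)}$ for every Borel $\bbB$, which I would simply quote from the proof of \LEM{ext-kernel}, and that $c_{\xXX}$ is a bounded, degree-preserving Borel function so that \LEM{ext} applies --- after which everything reduces to the bookkeeping indicated above.
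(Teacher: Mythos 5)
Your proof is correct and takes essentially the same route as the paper: the key observations are exactly the intersection identity $\uuU . (\bbB_1 \cap \kkK) \cap \uuU . (\bbB_2 \cap \kkK) = \uuU . (\bbB_1 \cap \bbB_2 \cap \kkK)$ together with $\uuU . \bbB = \bbB$ for unitarily invariant $\bbB$ for part (A), and the fact that $\xXX \mapsto \hat{c}_{\xXX}$ is a unital $*$-homomorphism of $\mmM^{(*)}$ into $\ffF_{\ell,1}^{bd}$ for part (B), after which everything follows from \THM{main2}. You have simply unwound the details (the four spectral measure axioms, the identification of extensions via \LEM{ext}) that the paper's one-sentence proof leaves implicit.
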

\begin{proof}
To prove (A), it suffices to observe that for any $\bbB_1, \bbB_2 \in \Bb(\mmM^{(\ell)})$,
the intersection of $\uuU . (\bbB_1 \cap \kkK)$ and $\uuU . (\bbB_2 \cap \kkK)$ coincides with
$\uuU . (\bbB_1 \cap \bbB_2 \cap \kkK)$ (so, in particular, disjoint sets are transformed into
disjoint sets), and that $\uuU . \bbB = \bbB$ for any $\bbB \in \Bb^{(\ell)}$; whereas
the conclusion of (B) follows from \THM{main2} and the fact that the assignment $\xXX \mapsto
\hat{c}_{\xXX}$ correctly defines a unital $*$-homomorphism of $\mmM^{(*)}$ into
$\ffF_{\ell,1}^{bd}$.
\end{proof}

Whenever $\ueT$ is an \FTI{} $\ell$-tuple on a Hilbert space $(\HHh,\scalarr)$ and $\kkK$ is a fixed
Borel kernel of $\mmM^{(\ell)}$, the function $\pi_{\kkK}$ enables us to define a multiplication
of vectors in $\HHh$ by elements of $\mmM^{(*)}$. Namely, whenever $h$ is a vector in $\HHh$ and
$\xXX$ is an arbitrary element of $\mmM^{(*)}$, we may write $\xXX \muple h$ to denote the vector
$\pi^{\kkK}(\xXX) h$. It follows from \LEM{ext+repr} that the map $\mmM^{(*)} \times \HHh \ni
(\xXX,h) \mapsto \xXX \muple h \in \HHh$ is bilinear and $\eEE \muple h = h$, $(\xXX \yYY) \muple h
= \xXX \muple (\yYY \muple h)$ and $\scalar{X \muple h}{h'} = \scalar{h}{X^* \muple h'}$ for any $h,
h' \in \HHh$ and $\xXX, \yYY \in \mmM^{(*)}$. Such a point of view allows us to look at the Hilbert
space $\HHh$ as a module over $\mmM^{(*)}$. But then, in a classical way, we may define integration
of $\mmM^{(*)}$-valued functions with respect to $\BBb(\HHh)$-valued spectral measures. The concept
of integrating vector-valued maps with respect to vector-valued measures (starting from a bilinear
map) is well developed; see, for example, a book \cite{din} or a pioneering paper \cite{bar} on this
topic as well as further \cite{go1,go2}, \cite{lew}, \cite{t-w}, \cite{s-t} and \cite{pn5} (consult
also \cite{d-u} or Chapter~IV in \cite{d-s}).\par
To define integration, we fix a Borel kernel $\kkK$ of $\mmM^{(\ell)}$ and an \FTI{} $\ell$-tuple
$\ueT$ of operators in a Hilbert space $\HHh$, and put, for simplicity, $E \df E_{\ueT}^{\kkK}$.
Also, to make the presentation more transparent, we shall use the notation ``$\xXX \muple h$''
introduced above (where $\xXX \in \mmM^{(*)}$ and $h \in \HHh$) instead of $\pi_{\kkK}(\xXX) h$.
Since $E$ takes values in the center of $\WwW(\ueT)$, we obtain
\begin{equation*}
E(\sigma) (\xXX \muple h) = \xXX \muple (E(\sigma) h) \qquad (\sigma \in \Bb(\mmM^{(\ell)}),\ \xXX
\in \mmM^{(*)},\ h \in \HHh),
\end{equation*}
which means that the operators $E(\sigma)$ are $\mmM^{(*)}$-linear (when thinking of $\HHh$
as an $\mmM^{(*)}$-module). This remark makes the concept of integration defined below more natural
and typical.\par
First assume a Borel function $v\dd \mmM^{(\ell)} \to \mmM^{(*)}$ is bounded and has a countable
range. We claim that then the series $\sum_{\xXX\in\mmM^{(*)}} \xXX \muple E(v^{-1}(\{\xXX\}))$ is
unconditionally convergent in the strong operator topology. Indeed, we may assume the range of $v$
is infinite and arrange all its elements in a one-to-one sequence $\xXX_1,\xXX_2,\ldots$ Further,
put $\sigma_n = \uuU . (v^{-1}(\{\xXX_n\}) \cap \kkK)$. For any $\xXX \in \mmM^{(*)}$, let
$\hat{c}_{\xXX}$ be as specified in part (B) of \LEM{ext+repr}. Observe that the sets $\sigma_n$ are
pairwise disjoint and
\begin{equation*}
\sum_{\xXX\in\mmM^{(*)}} \xXX \muple E(v^{-1}(\{\xXX\})) = \sum_{n=1}^{\infty} (\jJ_{\sigma_n}
\hat{c}_{\xXX_n})[\ueT].
\end{equation*}
So, since the series $\sum_{n=1}^{\infty} \jJ_{\sigma_n} \hat{c}_{\xXX_n}$ is pointwise convergent
to a function in $\ffF_{\ell,1}^{bd}$ and its partial sums are uniformly bounded (because $v$ is
bounded), we conclude that the right-hand side of the above formula converges (unconditionally)
in the strong operator topology (by (F5)). We define $\int_{\mmM^{(\ell)}}^{\pi_{\kkK}} v
\dint{E_{\ueT}^{\kkK}}$ as the limit of the aforementioned series. Note that
\begin{equation}\label{eqn:int}
\int_{\mmM^{(\ell)}}^{\pi_{\kkK}} v \dint{E_{\ueT}^{\kkK}} = \tilde{v}[\ueT]
\end{equation}
where $\tilde{v} \in \ffF_{\ell,1}^{bd}$ is given by $\tilde{v}(\ueX) \df \sum_{\xXX\in\mmM^{(*)}}
\jJ_{\uuU . (v^{-1}(\{\xXX\}) \cap \kkK)}(\ueX) \hat{c}_{\xXX}(\ueX)$. It is easy to check that
$\tilde{v}(\ueX)$ coincides with the $n$th entry of $v(\ueX)$ for any $\ueX \in \kkK \cap
\mmM_n^{\ell}$. This implies that the assignment $v \mapsto \tilde{v}$ (when $v$ runs over all Borel
functions with countable ranges) defines a continuous unital $*$-homomorphism $\Phi_0$ whose norm
does not exceed $1$. Consequently, also the function $\Psi_0$ which assigns to each such a function
$v$ the integral $\int_{\mmM^{(\ell)}}^{\pi_{\kkK}} v \dint{E_{\ueT}^{\kkK}}$ is a unital
$*$-homomorphism and $\|\Psi_0\| \leqsl 1$ (by \eqref{eqn:int}). So, both $\Phi_0$ and $\Psi_0$
extend to unital $*$-homomorphisms $\Phi$ and $\Psi$ defined on the uniform closure $\DdD$ of their
common domain $\DdD_0$. It is well-known (and follows, for example, from a classical theorem due
to Pettis \cite{pet}) that $\DdD$ consists of all functions $f\dd \mmM^{(\ell)} \to \mmM^{(*)}$ for
which $f(\mmM^{(\ell)}$ is a bounded separable subspace of $\mmM^{(*)}$ and $\psi \circ f\dd
\mmM^{(\ell)} \to \CCC$ is Borel for any continuous linear functional $\psi$ on $\mmM^{(*)}$.
(It follows from theory of Suslin spaces that the image in a metrizable space of a Polish space
under a Borel function is always separable. Thus, $\DdD$ constists precisely of all bounded Borel
functions from $\mmM^{(\ell)}$ into $\mmM^{(*)}$.) Since $\Psi_0$ is $\mmM^{(*)}$-linear (that is,
$\Psi_0(\xXX \muple v) = \xXX \muple \Psi_0(v)$ for any $v \in \DdD_0$ and $\xXX \in \mmM^{(*)}$,
which may be easily verified), we infer that also $\Psi$ is $\mmM^{(*)}$-linear. Finally, denoting
by $\tilde{v}$ and $\int_{\mmM^{(\ell)}}^{\pi_{\kkK}} v \dint{E_{\ueT}^{\kkK}}$ the values at $v \in
\DdD$ of, respectively, $\Phi$ and $\Psi$, equation \eqref{eqn:int} still hold (for all $v \in
\DdD$).\par
To reduce the number of symbols, let us simplify a suggestive notation for the integral
in \eqref{eqn:int} and write $\int^{\kkK} v \dint{E_{\ueT}}$ or $\int^{\kkK} v(\ueX)
\dint{E_{\ueT}(\ueX)}$ instead.

\begin{pro}{int}
For any Borel kernel $\kkK$ and $u \in \ffF_{\ell,1}^{bd}$,
\begin{equation}\label{eqn:int-repr}
\int^{\kkK} u \dint{E_{\ueT}} = u[\ueT].
\end{equation}
\end{pro}
\begin{proof}
We know that $\int^{\kkK} u \dint{E_{\ueT}} = \tilde{u}[\ueT]$. Thus, it suffices to check that
$\tilde{u} = u$ (for $u \in \ffF_{\ell,1}$). To this end, recall that for any $v \in \DdD_0$,
$\tilde{v}(\ueX)$ coincides with the $n$th entry of $v(\ueX)$ for all $\ueX \in \kkK \cap
\mmM_n^{\ell}$. Consequently, the same can be said about each $v \in \DdD$. So, we conclude that for
any $u \in \ffF_{\ell,1}^{bd}$, $\tilde{u}$ and $u$ coincide on $\kkK$ and hence $\tilde{u} = u$.
\end{proof}

\PRO{int} implies that $\int^{\kkK} v \dint{E_{\ueT}} = \int^{\kkK} \tilde{v} \dint{E_{\ueT}}$ for
each $v \in \DdD$. So, the value of the integral always belongs to $\WwW''(\ueT)$. One may also
readily derive from the above property \textit{bounded convergence theorem} which asserts that
$\int^{\kkK} v_n \dint{E_{\ueT}}$ converge in the $*$-strong operator topology to $\int^{\kkK} v
\dint{E_{\ueT}}$ provided $v_n \in \DdD$ are uniformly bounded and converge pointwise to $v \in
\DdD$.\par
For unbounded Borel functions $u\dd \mmM^{(\ell)} \to \mmM^{(*)}$ (they automatically have separable
ranges), the integral $\int^{\kkK} u \dint{E_{\ueT}}$ may be defined as an operator in $\hat{\WwW}$
where $\WwW = \WwW''(\ueT)$ as follows. (We argue similarly as in the proof of \PRO{affil}.) There
exists a sequence $\bbB_1,\bbB_2,\infty$ of pairwise disjoint Borel sets that cover $\mmM^{(\ell)}$
and $u$ is bounded on each of them. Let $b_n\dd \mmM^{(\ell)} \to \mmM^{(*)}$ denote
the characteristic function of $\bbB_n$. We define $\int^{\kkK} u \dint{E_{\ueT}}$
as $\sum_{n=1}^{\infty} \int^{\kkK} u b_n \dint{E_{\ueT}}$. Although such a definition is quite
natural, it is not obvious that the value of $\int^{\kkK} u \dint{E_{\ueT}}$ is independent
of the choice of the sets $\bbB_n$. We shall check it with the aid of \eqref{eqn:int}. We know from
that formula that $\int^{\kkK} u b_n \dint{E_{\ueT}} = \widetilde{u b_n}[\ueT]$ for any $n$. It may
be easily checked that $\widetilde{u b_n}$ coincides with $\jJ_{\uuU . (\bbB_n \cap \kkK)}
\tilde{u}$ (where, as before, $\tilde{u}$ is a unique function $f \in \ffF_{\ell,1}$ such that for
any $\ueX \in \kkK \cap \mmM_n^{\ell}$, $f(\ueX)$ is the $n$th entry of $u(\ueX)$). So,
$\widetilde{u b_n}[\ueT] = (\jJ_{\uuU . (\bbB_n \cap \kkK)} \tilde{u})[\ueT] \cdotaff
\jJ_{\uuU . (\bbB_n \cap \kkK)}[\ueT]$. Observing that the functions
$\jJ_{\uuU . (\bbB_n \cap \kkK)}$ are mutually disjoint and sum up to the unit matrix
of a respective degree at each irreducible $\ell$-tuple of matrices, we infer from the very
definition of $\tilde{u}[\ueT]$ that
\begin{equation*}
\int^{\kkK} u \dint{E_{\ueT}} = \sum_{n=1}^{\infty} (\jJ_{\uuU . (\bbB_n \cap \kkK)}
\tilde{u})[\ueT] \jJ_{\uuU . (\bbB_n \cap \kkK)}[\ueT] = \tilde{u}[\ueT].
\end{equation*}
In particular, the above integral is well defined, \eqref{eqn:int} holds for any Borel function
$u\dd \mmM^{(\ell)} \to \mmM^{(*)}$ and, consequently, \eqref{eqn:int-repr} holds for any $u \in
\ffF_{\ell,1}$.\par
Finally, if, for a Borel function $u\dd \mmM^{(\ell)} \to (\mmM^{(*)})^{\ell'}$, we define
$\int^{\kkK} u \dint{E_{\ueT}}$ coordinatewise, that is,
\begin{equation*}
\int^{\kkK} u \dint{E_{\ueT}} \df \Bigl(\int^{\kkK} \pi_1 \circ u \dint{E_{\ueT}},\ldots,\int^{\kkK}
\pi_{\ell'} \circ u \dint{E_{\ueT}}\Bigr)
\end{equation*}
(where $\pi_k\dd (\mmM^{(*)})^{\ell'} \to \mmM^{(*)}$ is the natural projection onto the $k$th
coordinate), then we obtain

\begin{thm}[Spectral Theorem]{spectral}
Let $\ueT$ be an \FTI{} $\ell$-tuple of operators. For any Borel kernel $\kkK$ of $\mmM^{(\ell)}$,
\begin{equation*}
\int^{\kkK} \ueX \dint{E_{\ueT}(\ueX)} = \ueT.
\end{equation*}
\end{thm}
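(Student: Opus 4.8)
The plan is to unwind the coordinate-wise definition of $\int^{\kkK} \ueX \dint{E_{\ueT}(\ueX)}$ and reduce the claim to property (F1) of \THM{main2}. Write $\ueT = (T_1,\ldots,T_{\ell})$. The integrand is the map $\id\dd \mmM^{(\ell)} \to (\mmM^{(*)})^{\ell}$, $\ueX \mapsto \ueX$, obtained by regarding each matrix $X_k \in \mmM_n$ occurring in a tuple $\ueX = (X_1,\ldots,X_{\ell}) \in \mmM_n^{\ell}$ as an element of $\mmM^{(*)}$ via the canonical embedding (the sequence whose $n$th entry is $X_k$ and all of whose other entries vanish). Since the topology of $\mmM$ is the one inherited from $\mmM^{(*)}$, this map is continuous, hence Borel, so its $k$th coordinate $\iota_k \df \pi_k \circ \id\dd \mmM^{(\ell)} \to \mmM^{(*)}$ is a legitimate (possibly unbounded) integrand. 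By the very definition of the coordinate-wise integral, $\int^{\kkK} \ueX \dint{E_{\ueT}(\ueX)} = \bigl(\int^{\kkK} \iota_1 \dint{E_{\ueT}},\ldots,\int^{\kkK} \iota_{\ell} \dint{E_{\ueT}}\bigr)$, so it suffices to show $\int^{\kkK} \iota_k \dint{E_{\ueT}} = T_k$ for each $k \leqsl \ell$.

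Next I would apply the representation formula \eqref{eqn:int}, which, as observed just before the statement, holds for every Borel function from $\mmM^{(\ell)}$ into $\mmM^{(*)}$, bounded or not: $\int^{\kkK} \iota_k \dint{E_{\ueT}} = \tilde{\iota}_k[\ueT]$, where $\tilde{\iota}_k \in \ffF_{\ell,1}$ is the unique compatible function whose value at any $\ueX \in \kkK \cap \mmM_n^{\ell}$ equals the $n$th entry of $\iota_k(\ueX)$. But by construction the $n$th entry of $\iota_k(X_1,\ldots,X_{\ell})$ is precisely $X_k$; hence $\tilde{\iota}_k$ and $\pi^{(\ell)}_k$ agree on $\kkK$. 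Two functions in $\ffF_{\ell,1}$ that coincide on a kernel are equal, because every $\ueX \in \mmM^{(\ell)}$ decomposes as in \eqref{eqn:pd} and members of $\ffF_{\ell,1}$ respect the unitary action and finite direct sums (cf.\ \COR{ext}); therefore $\tilde{\iota}_k = \pi^{(\ell)}_k$. Consequently $\int^{\kkK} \iota_k \dint{E_{\ueT}} = \pi^{(\ell)}_k[\ueT] = T_k$ by (F1) of \THM{main2}, and reassembling the $\ell$ coordinates yields $\int^{\kkK} \ueX \dint{E_{\ueT}(\ueX)} = (T_1,\ldots,T_{\ell}) = \ueT$.

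I do not expect any serious obstacle: all the analytic content has already been incorporated into \THM{main2} and into the construction of the integral culminating in \eqref{eqn:int} and its unbounded extension. The only points that call for (minor) care are verifying that $\ueX \mapsto \ueX$ is an admissible integrand --- a Borel, indeed continuous, map into $(\mmM^{(*)})^{\ell}$, automatically with separable range --- so that the unbounded integral of the preceding discussion applies, and keeping the embedding $\mmM_n \hookrightarrow \mmM^{(*)}$ straight so that the ``$n$th entry'' in the definition of $\tilde{\iota}_k$ is unambiguously $X_k$. Once these bookkeeping matters are settled, the identity $\tilde{\iota}_k = \pi^{(\ell)}_k$ together with (F1) closes the argument.
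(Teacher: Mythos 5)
Your argument is correct and essentially the one the paper intends: the paper states the Spectral Theorem immediately after extending the representation formula $\int^{\kkK} u \dint{E_{\ueT}} = u[\ueT]$ to all $u \in \ffF_{\ell,1}$, and the theorem follows coordinate-wise by taking $u = \pi^{(\ell)}_k$ and invoking (F1). Your re-derivation of $\tilde\iota_k = \pi^{(\ell)}_k$ is just the content of the proof of \PRO{int} specialized to $u = \pi^{(\ell)}_k$, so this is the same route rather than a different one.
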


Now we shall discuss how the multiplication ``$\muple$'' (by elements of $\mmM^{(*)}$) changes when
$\kkK$ varies. To distinguish between such multiplications induced by different Borel kernels, below
we write $\xXX \muple_{\kkK} h$ to denote $\pi_{\kkK}(\xXX) h$.

\begin{pro}{kern-vary}
Let $\kkK$ and $\kkK'$ be two Borel kernels of $\mmM^{(\ell)}$. Then there exists a function $u \in
\ffF_{\ell,1}^{bd}$ such that:
\begin{itemize}
\item $u$ is a unitary element of $\ffF_{\ell,1}^{bd}$, that is, $u^* u = u u^* = \jJ$; and
\item the function $\phi\dd \mmM^{(\ell)} \ni \ueX \mapsto u(\ueX) . \ueX \in \mmM^{(\ell)}$ is
 a well defined unitary element of $\ffF_{\ell,\ell}^{loc}$ as well as a Borel isomorphism that
 sends $\kkK$ onto $\kkK'$;
\end{itemize}
and for any \FTI{} $\ell$-tuple $\ueT$ of operators in $\HHh$:
\begin{itemize}
\item $E_{\ueT}^{\kkK'}(\sigma) = E_{\ueT}^{\kkK}(\phi^{-1}(\sigma))$ for each $\sigma \in
 \Bb(\mmM^{(\ell)})$ \textup{(}that is, $E_{\ueT}^{\kkK'}$ is the transport of $E_{\ueT}^{\kkK}$
 under $\phi$\textup{)}; and
\item $\xXX \muple_{\kkK'} h = u[\ueT] (\xXX \muple_{\kkK} (u^*[\ueT] h))$ for any $\xXX \in
 \mmM^{(*)}$ and $h \in \HHh$ \textup{(}that is, the multiplication by $\xXX$ induced by $\kkK'$ is
 unitarily equivalent to that induced by $\kkK$\textup{)}.
\end{itemize}
\end{pro}

It is worth noting here that the connection between the integrals $\int^{\kkK} u \dint{E_{\ueT}}$
and $\int^{\kkK'} u \dint{E_{\ueT}}$ for a Borel function $u\dd \mmM^{(\ell)} \to \mmM^{(*)}$ is,
in general, hard to describe (but, for $u \in \ffF_{\ell,1}$, both these integrals coincide,
which follows from \PRO{int} and the remarks preceding \THM{spectral}).

\begin{proof}
As shown in the proof of \LEM{ext}, there is a Borel function $g\dd \kkK \to \mmM$ such that
$g(\ueX) \in \uuU_{d(\ueX)}$ and $g(\ueX) . \ueX \in \kkK'$ for any $\ueX \in \kkK$. It follows from
\LEM{ext} that $g$ extends to a function $u \in \ffF_{\ell,1}^{bd}$. Then one deduces that $u(\ueX)
\in \uuU_{d(\ueX)}$ for each $\ueX \in \mmM^{(\ell)}$ and, consequently, $u^* u = u u^* = \jJ$.
Further, straightforward calculations show that the function $\phi\dd \mmM^{(\ell)} \to
\mmM^{(\ell)}$ defined by $\phi(\ueX) \df u(\ueX) . \ueX$ is compatible. It is also easily seen that
$\phi$ is locally bounded and Borel, and $\phi(\kkK) = \kkK'$. So, we infer from the compatibility
of $\phi$ that $\phi(\mmM^{(\ell)}) = \mmM^{(\ell)}$. Thus, to conclude that $\phi$ is a Borel
isomorphism, it suffices to check that it is one-to-one (again, by Suslin's theorem). To this end,
assume $\phi(\ueX) = \phi(\ueY)$. Then $d(\ueX) = d(\ueY)$ and
\begin{equation}\label{eqn:XY}
u(\ueX) . \ueX = u(\ueY) . \ueY.
\end{equation}
Taking into account the facts that $u$ is compatible and both $u(\ueX)$ and $u(\ueY)$ are unitary,
\eqref{eqn:XY} yields that $(u(\ueX) =)\, u(\ueX) . u(\ueX) = u(u(\ueX) . \ueX) = u(u(\ueY) . \ueY)
= u(\ueY) . u(\ueY)\, (= u(\ueY))$, and hence $\ueX = \ueY$ (by \eqref{eqn:XY}).\par
Now let $\ueT$ be an \FTI{} $\ell$-tuple of operators in $\HHh$. For any $\sigma \in
\Bb(\mmM^{(\ell)})$, the compatibility of $\phi$ implies that $\uuU . (\sigma \cap \kkK') = \uuU .
\phi^{-1}(\sigma \cap \kkK') = \uuU . (\phi^{-1}(\sigma) \cap \kkK)$ and hence
$E_{\ueT}^{\kkK'}(\sigma) = E_{\ueT}^{\kkK}(\phi^{-1}(\sigma))$. Finally, to establish
the connection between the multiplications (by elements of $\mmM^{(*)}$) induced by $\kkK$ and
$\kkK'$, for any $\xXX \in \mmM^{(*)}$, we shall use $\hat{c}_{\xXX}$ and $\hat{c}_{\xXX}'$
to denote the functions in $\ffF_{\ell,1}^{bd}$ that are constantly equal to the $n$th coordinate
of $\xXX$ on, respectively, $\kkK \cap \mmM_n^{\ell}$ and $\kkK' \cap \mmM_n^{\ell}$ (see part (B)
of \LEM{ext+repr}). Then, for any $\ueY \in \kkK$ and $\xXX = (X_n)_{n=1}^{\infty} \in \mmM^{(*)}$,
$\hat{c}_{\xXX}'(\phi(\ueY)) = X_{d(\ueY)} = \hat{c}_{\xXX}(\ueY)$. So, since both $\hat{c}_{\xXX}'
\circ \phi$ and $\hat{c}_{\xXX}$ belong to $\ffF_{\ell,1}$ and coincide on $\kkK$, we conclude that
$\hat{c}_{\xXX}' \circ \phi = \hat{c}_{\xXX}$. Consequently, for any $h \in \HHh$, $\xXX
\muple_{\kkK'} h = \pi_{\kkK'}(\xXX) h = (\hat{c}_{\xXX} \circ \phi)[\ueT] h =
\hat{c}_{\xXX}[\phi[\ueT]] h$. Further, observe that (by definition) $\phi = (u \pi^{(\ell)}_1
(u)^{-1},\ldots,u \pi^{(\ell)}_{\ell} (u)^{-1})$ and, consequently,
\begin{equation*}
\phi[\ueT] = (u[\ueT] \pi^{(\ell)}_1[\ueT] (u)^{-1}[\ueT],u[\ueT] \pi^{(\ell)}_{\ell}[\ueT]
(u)^{-1}[\ueT]) = u[\ueT] . \ueT
\end{equation*}
(recall that $u[\ueT]$ is a unitary operator). Further, we infer from \COR{prop} that
$\hat{c}_{\xXX}[u[\ueT] . \ueT] = u[\ueT] . \hat{c}_{\xXX}[\ueT] = u[\ueT] \hat{c}_{\xXX}[\ueT]
u^*[\ueT]$. All these remarks yield $\xXX \muple_{\kkK'} h = u[\ueT] \hat{c}_{\xXX} u^*[\ueT] h =
u[\ueT] (\xXX \muple_{\kkK} (u^*[\ueT] h))$ and we are done.
\end{proof}

The above result asserts that all operators of multiplication by a fixed element of $\mmM^{(*)}$
induced by Borel kernels of $\mmM^{(\ell)}$ are mutually unitarily equivalent. In general, there is
no reasonable way to distinguish one of them as best fitting to a fixed \FTI{} $\ell$-tuple. This
causes that many attributes (such as principal spectrum) of such tuples have to be unitarily
invariant. This is one of basic differences between general \FTI{} tuples and normal operators. For
the latters, their principal spectra coincide with usual (``algebraic''). The next result
establishes a counterpart of that property for general \FTI{} tuples, where the operator
of multiplication of vectors in $\HHh$ by $\xXX \in \mmM^{(*)}$ induced by a Borel kernel $\kkK$
shall be suggestively denoted by $\xXX \muple I_{\HHh}$ (here and below, $I_{\HHh}$ is the identity
operator on $\HHh$).

\begin{pro}{spectrum}
Let $\ueT = (T_1,\ldots,T_{\ell})$ be an \FTI{} $\ell$-tuple of operators in a separable Hilbert
space $\HHh$ and $\ueX = (X_1,\ldots,X_{\ell})$ be an irreducible $\ell$-tuple in $\mmM_p^{\ell}$.
\TFCAE
\begin{enumerate}[\upshape(i)]
\item $\ueX$ belongs to the principal spectrum of $\ueT$;
\item there exists a Borel kernel $\kkK$ of $\mmM^{(\ell)}$ such that for any $\epsi > 0$ there is
 a nonzero projection $Z \in \ZZz(\WwW''(\ueT))$ with $Z \WwW''(\ueT)$ type~$\tI_p$ and
 \begin{equation}\label{eqn:epsi}
 \|(T_k \minusaff X_k \muple I_{\HHh}) \cdotaff Z\| \leqsl \epsi \qquad (k=1,\ldots,\ell).
 \end{equation}
\end{enumerate}
\end{pro}
\begin{proof}
Assume $\kkK$ is as specified in (ii). To prove (i), we need to check that $E_{\ueT}(\bbB) \neq 0$
for any set $\bbB \in \Bb^{(\ell)}$ which contains $\ueX$ and is relatively open in $\iiI^{(\ell)}$.
To this end, fix such a set $\bbB$. Then there is a number $\delta > 0$ such that each $\ueY \in
\mmM_{d(\ueX)}^{\ell}$ with $\|\ueY - \ueX\| \leqsl \delta$ belongs to $\bbB$ (we use here
the property that $\iiI^{(\ell)}$ is open in $\mmM^{(\ell)}$). Using (ii), take a nonzero projection
$Z \in \ZZz(\WwW''(\ueT))$ for which $Z \WwW''(\ueT)$ is type~$\tI_p$ and \eqref{eqn:epsi} holds
with $\epsi = \delta$. One deduces from \PRO{sep} and \COR{equal} that there is $b \in
\ZZz(\ffF_{\ell,1}^{bd})$ with $b^* b = \jJ$ and $b[\ueT] = Z$. Moreover, since $Z \WwW''(\ueT)$ is
type~$\tI_p$, we may and do assume that $b$ vanishes at each point of $\iiI^{(\ell)} \setminus
\mmM_p^{\ell}$ (thanks to \PRO{In}).\par
Recall that $X_k \muple I_{\HHh} = \hat{c}_{X_k}[\ueT]$ and hence $(T_k \minusaff X_k \muple
I_{\HHh}) \cdotaff Z = w_k[\ueT]$ where $w_k \df (\pi^{(\ell)}_k - \hat{c}_{X_k}) b$. Further,
\PRO{sep} and \eqref{eqn:epsi} imply that $(T_k \minusaff X_k \muple I_{\HHh}) \cdotaff Z =
v_k[\ueT]$ for some $v_k \in \ffF_{\ell,1}^{bd}$ with $\|v_k\| \leqsl \delta$. So, $w_k[\ueT] =
v_k[\ueT]$; and \COR{equal} asserts that $E_{\ueT}(D(v_k,w_k)) = 0$. We conclude that
$E_{\ueT}(\zzZ_k) = I_{\HHh}$ where $\zzZ_k \df \{\ueY \in \iiI^{(\ell)}\dd\ \|w_k(\ueY)\| \leqsl
\delta\}$. Further, since $Z = b[\ueT]$ is nonzero, the value of $E_{\ueT}$ at $\zzZ_0 \df \{\ueY
\in \iiI^{(\ell)}\dd\ b(\ueY) = I_{d(\ueY)}\}$ is nonzero as well. Consequently, $E_{\ueT}(\zzZ)
\neq 0$ where $\zzZ \df \bigcap_{k=0}^{\ell} \zzZ_k$. Now let $\ueY \in \zzZ \cap \kkK$ and $k \in
\{1,\ldots,\ell\}$. Then $\|\pi^{(\ell)}_k(\ueY) - \hat{c}_{X_k}(\ueY)\| = \|w_k(\ueY)\| \leqsl
\delta$. But $\zzZ_0 \subset \mmM_p^{\ell}$ and therefore $\hat{c}_{X_k}(\ueY) = X_k$. So,
$\|\ueY - \ueX\| \leqsl \delta$, which gives $\ueY \in \bbB$ (for any $\ueY \in \zzZ \cap \kkK$).
We conclude that $\uuU . (\zzZ \cap \kkK) \subset \bbB$. Finally, since $\zzZ$ is unitarily
invariant, we see that $\uuU . (\zzZ \cap \kkK) = \zzZ$ and hence $E_{\ueT}(\bbB) \neq 0$.\par
Now assume $\ueX \in \supp(E_{\ueT})$. We shall construct a Borel kernel $\kkK$ for which (ii)
holds. To this end, we fix an arbitrary Borel kernel $\kkK'$ and define the function $\xi\dd \kkK'
\cap \mmM_p^{\ell} \to \RRR$ by $\xi(\ueY) \df \min\{\|\ueX - U . \ueY\|\dd\ U \in \uuU_p\}$. It is
easy to check that $\xi$ is continuous. Further, let $\Xi$ be a multifunction on $\kkK' \cap
\mmM_p^{\ell}$ defined by $\Xi(\ueY) \df \{U \in \uuU_p\dd\ \|\ueX - U . \ueY\| = \xi(\ueY)\}$ (for
$Y \in \kkK' \cap \mmM_p^{\ell}$). Notice that for any compact set $\llL \subset \uuU_p$, the set
$\{\ueY \in \kkK' \cap \mmM_p^{\ell}\dd\ \Xi(\ueY) \cap \llL \neq \varempty\}$ is relatively closed
in $\kkK' \cap \mmM_p^{\ell}$ and, consequently, $\{\ueY \in \kkK' \cap \mmM_p^{\ell}\dd\ \Xi(\ueY)
\cap \ggG \neq \varempty\} \in \Bb(\kkK' \cap \mmM_p^{\ell})$ for any set $\ggG \subset \uuU_p$ that
is relatively open in $\uuU_p$. So, it follows from the Kuratowski-Ryll-Nardzewski selection theorem
\cite{krn} (see also Theorem~1 in \S1 of Chapter~XIV in \cite{k-m}) that there exists a Borel
function $\tau\dd \kkK' \cap \mmM_p^{\ell} \to \uuU_p$ such that $\tau(\ueY) \in \Xi(\ueY)$ for any
$\ueY \in \kkK' \cap \mmM_p^{\ell}$. This means that
\begin{equation}\label{eqn:tau}
\|\ueX - \tau(\ueY) . \ueY\| = \min\{\|\ueX - U . \ueY\|\dd\ U \in \uuU_p\} \qquad (\ueY \in \kkK'
\cap \mmM_p^{\ell}).
\end{equation}
We define $\kkK$ as $(\kkK' \setminus \mmM_p^{\ell}) \cup \{\tau(\ueY) . \ueY\dd\ \ueY \in \kkK'
\cap \mmM_p^{\ell}\}$. It is immediate that $\kkK$ is a kernel of $\mmM^{\ell}$. The property that
$\kkK$ is Borel follows from the Suslin theorem (on the ranges of one-to-one Borel functions).
We shall now check that $\kkK$ is the kernel we searched for. To this end, fix $\epsi > 0$ and
denote by $\bbB_0$ the set of all $\ueY \in \kkK \cap \mmM_p^{\ell}$ with $\|\ueX - \ueY\| \leqsl
\epsi$. We claim that $\bbB \df \uuU . \bbB_0\ (\in \Bb^{(\ell)})$ is a neighbourhood of $\ueX$ (not
necessarily open). Indeed, if $\delta < \epsi$ is a positive real number such that the open ball
$\ssS$ around $\ueX$ of radius $\delta$ is contained in $\iiI^{(\ell)}$, then $\ssS \subset \bbB$,
which readily follows from \eqref{eqn:tau}. So, since $\ueX \in \supp(E_{\ueT})$, we infer that
$E_{\ueT}(\bbB) \neq 0$, and hence $Z \df \jJ_{\bbB}[\ueT]$ is nonzero, by \COR{equal}. We know that
$Z \in \ZZz(\WwW''(\ueT))$ is a projection such that $Z \WwW''(\ueT)$ is type $\tI_p$ (thanks
to \PRO{In}). Thus, it remains to verify \eqref{eqn:epsi}. To this end, fix $k \in \{1,\ldots,
\ell\}$, put $w \df (\pi^{(\ell)}_k - \hat{c}_{X_k}) j_{\bbB}$ and note that $(T_k \minusaff X_k
\muple I_{\HHh}) \cdotaff Z = w[\ueT]$. So, to convince oneself that \eqref{eqn:epsi} holds, it is
enough to show that $\|w\| \leqsl \epsi$ or, equivalently, that $\|w(\ueY)\| \leqsl \epsi$ for any
$\ueY \in \kkK$ (see \COR{ext}). Fix such $\ueY$. Since $j_{\bbB}(\ueY) = 0$ for $\ueY \notin \bbB$,
we may assume $\ueY \in \bbB$. Observe that $\bbB \cap \kkK = \bbB_0$ and hence $\|\ueX - \ueY\|
\leqsl \epsi$. Further, $\hat{c}_{X_k}(\ueY) = X_k$ and therefore $\|w(\ueY)\| =
\|\pi^{(\ell)}_k(\ueY) - X_k\| \leqsl \|\ueY - \ueX\| \leqsl \epsi$, and we are done.
\end{proof}

We conclude the paper with the next result, which describes a method of producing \FTI{} tuples.
For its purposes, let us agree that whenever $T_{jk}$, for $j,k=1,\ldots,N$, are operators
in a common Hilbert space $\HHh$, then $[T_{jk}]$ will denote the operator $S$ in $\HHh^N$ defined
as follows. The domain of $S$ consists of all vectors $(h_1,\ldots,h_N) \in \HHh^N$ such that $h_k$
belongs to the domain of $T_{jk}$ for any $j$ and $k$; and for $h = (h_1,\ldots,h_N)$ in the domain
of $S$, $S h$ is computed in a standard manner, as the multiplication of two matrices (of course,
one needs to write $h$ as a column). The operator $[T_{jk}]$ may not be neither closed nor densely
defined.

\begin{pro}{FTI}
If all operators $T^{(p)}_{jk}$ \textup{(}where $p=1,\ldots,\ell$ and $j,k=1,\ldots,N$\textup{)} are
affiliated with a common finite type~$\tI$ von Neumann algebra, then the operators $S_p \df
[T^{(p)}_{jk}]\ (p=1,\ldots,\ell)$ are densely defined and closable and the $\ell$-tuple of their
closures is \FTI.
\end{pro}
\begin{proof}
Let $\HHh$ and $\MmM$ denote the Hilbert space which all operators $T^{(p)}_{jk}$ act in and,
respectively, a finite type~$\tI$ von Neumann algebra which all these operators are affiliated with.
Let $\WwW$ denote the von Neumann algebra of all operators on $\HHh^N$ of the form $[S_{jk}]$ where
all $S_{jk}$ belong to $\MmM$. Since $\WwW$ is naturally $*$-isomorphic to $\MmM \bar{\otimes}
\mmM_N$, it follows (e.g.\ from Propositions~2.6.1 and 2.6.2 in \cite{sak}) that $\WwW$ is finite
and type~$\tI$. Further, the results due to Murray and von Neumann \cite{mvn} (consult also
\cite{liu} or \cite{pn4}) imply that the intersection of the domains of all $T^{(p)}_{jk}$ is dense
in $\HHh$, from which we infer that each of $S_p$ is densely defined. Now if we put
$\tilde{T}^{(p)}_{jk} \df (T^{(p)}_{kj})^*$, the above argument proves that each of the operators
$\tilde{S}_p \df [\tilde{T}^{(p)}_{jk}]$ is also densely defined. But, a straightforward calculation
shows that $\tilde{S}_p$ is contained in the adjoint of $S_p$, which implies that $S_p$ is closable.
Denote by $\bar{S}_p$ its closure. To conclude that $(\bar{S}_1,\ldots,\bar{S}_{\ell})$ is \FTI,
it is sufficient (thanks to \LEM{aff}) that $U S_p U^{-1} = S_p$ for any unitary operator $U \in
\WwW'$. But each such an operator $U$ is of the form $[V_{jk}]$ where $V_{jk} = 0$ for $k \neq j$
and $V_{jj} = V$ for any $j$ where $V$ is a unitary operator in $\MmM'$. Then, since all
$T^{(p)}_{jk}$ are affiliated with $\MmM$, we see that $V T^{(p)}_{jk} V^{-1} = T^{(p)}_{jk}$, from
which one readily deduces that $U S_p U^{-1} = S_p$, and we are done.
\end{proof}

\end{document}